\documentclass[10pt]{amsart}
\ifdefined\XeTeXversion\else\pdfoutput=1\fi

\usepackage[left=1in,right=1in,top=.9in,bottom=.9in]{geometry}
\usepackage{amsmath,amssymb,amsthm}
\usepackage{microtype}
\usepackage{tikz}
\usepackage{xcolor}
\usepackage[colorlinks=true,linkcolor=blue!50!black,
  citecolor=blue!50!black,urlcolor=blue!50!black]{hyperref}

\newtheorem{theorem}{Theorem}[section]
\newtheorem{lemma}[theorem]{Lemma}
\newtheorem{proposition}[theorem]{Proposition}
\newtheorem{corollary}[theorem]{Corollary}
\theoremstyle{remark}
\newtheorem{remark}[theorem]{Remark}

\newcommand{\R}{\mathbb R}
\newcommand{\Q}{\mathbb Q}
\newcommand{\conv}{\operatorname{conv}}
\newcommand{\len}{\operatorname{len}}
\newcommand{\supp}{\operatorname{supp}}
\newcommand{\eps}{\varepsilon}
\newcommand{\seg}[2]{[#1,#2]}
\newcommand{\Lambdaarch}{\(\Lambda\)-arch}
\newcommand{\code}[1]{\texttt{#1}}

\begin{document}

\title[Bellman's problem for the golden gnomon]
{The exact solution of Bellman's lost-in-a-forest problem
for the golden gnomon}

\author{Alexander Temerev}
\address{University of Geneva, Geneva, Switzerland}
\email{alexander.temerev@unige.ch}

\author{Alessio Doria}
\address{Independent researcher}
\email{numero2405@gmail.com}

\hypersetup{
  pdftitle={The exact solution of Bellman's lost-in-a-forest problem
    for the golden gnomon},
  pdfauthor={Alexander Temerev and Alessio Doria},
  pdfsubject={An exact escape-path theorem for the golden gnomon},
  pdfkeywords={Bellman's lost-in-a-forest problem, escape path,
    golden gnomon, support function, convex geometry, calibration}
}

\thanks{Large language models were used throughout this work---Claude
Fable~5, GPT~5.6 Sol, and Claude Opus~5---to search for the extremal curve,
to draft the arguments given here, and to write the accompanying Lean
development.  The finite algebraic certificates and the independent checks
of the rational order estimates were formalized in Lean~4, and the complete
verification development accompanies the paper at
\url{https://github.com/atemerev/gnomon}.
Section~\ref{sec:boundary} states precisely which steps are machine-checked
and which are not; those checks are valid independently of how the
statements they verify were found.}

\date{July 26, 2026}

\begin{abstract}
We solve Bellman's lost-in-a-forest problem for the golden gnomon
\(G\), the isosceles triangle with equal sides \(1\) and apex angle
\(108^\circ\): the shortest curve guaranteed to reach the boundary of
\(G\) from an unknown starting position and heading is a symmetric
seven-piece path of segments, circular shoulders, and tangents, of
exactly determined length \(C=1.282676025459\ldots\).  To our
knowledge, this is the first proved exact optimum for an isosceles
triangle whose base angle is below \(45^\circ\).  The curve's
parameters come from one isolated quartic root, and \(C\) is
transcendental.  Equivalently, \(C^{-1}G\) is the smallest homothetic
golden-gnomon cover of all unit arcs.

The proof introduces a balanced support calibration: one weighted
family of escape inequalities, built on the linear relation among the
triangle's three normals, exactly saturated by the candidate---through
eighteen exact support windows---and confronting every shorter
competitor at once.  Aggregation along the normal fan compresses the
calibration to a finite zero-sum family of supported vectors;
summation by parts then bounds its total by path length whenever the
running suffix balance, the ledger, stays in the unit disk.  A local
two-gap surgery and cyclic bitonicity force a shortest hypothetical
counterexample into exactly the temporal order the ledger tolerates.
Lean~4 verifies the two finite algebraic certificate families and the
reusable discrete ledger identities and bounds.
\end{abstract}

\subjclass[2020]{Primary 52A40; Secondary 52C15, 49Q10}
\keywords{Bellman's lost-in-a-forest problem, escape path, support
function, convex geometry, calibration, golden gnomon}

\maketitle

\begin{figure}[h]
\centering
\begin{tikzpicture}[scale=7.6,line cap=round,line join=round]
  \fill[blue!3] (-.809017,0)--(.809017,0)--(0,.587785)--cycle;
  \draw[blue!35!black,thick] (-.809017,0)--(.809017,0)--(0,.587785)--cycle;
  \draw[orange!85!black,very thick]
    (-.200294,0)--(.090730,0);
  \draw[orange!85!black,very thick]
    (.090730,0)
      arc[start angle=-90,end angle=-80.477,radius=.587785];
  \draw[orange!85!black,very thick]
    (.187980,.008101)--(.258773,.019977)--(.285047,.380686)
      --(.216722,.402696);
  \draw[orange!85!black,very thick]
    (.216722,.402696)
      arc[start angle=72.145,end angle=81.668,radius=.587785];
  \draw[orange!85!black,very thick]
    (.121673,.424804)--(-.166280,.466976);
  \draw[orange!85!black,thick,->]
    (-.122,0)--(-.025,0);
  \draw[orange!85!black,thick,->]
    (.264,.091)--(.275,.242);
  \draw[orange!85!black,thick,->]
    (.049,.435)--(-.049,.449);
  \fill[orange!85!black]
    (-.200294,0) circle (.006)
    (-.166280,.466976) circle (.006)
    (.285047,.380686) circle (.006);
  \node[orange!70!black,below left=1pt] at (-.200294,0)
    {\(\scriptstyle \gamma(0)\)};
  \node[orange!70!black,above left=1pt] at (-.166280,.466976)
    {\(\scriptstyle \gamma(7)\)};
  \node[orange!70!black,right=2pt] at (.285047,.380686)
    {\(\scriptstyle K\)};
  \node[blue!45!black,below left=1pt] at (.74,0) {\(\scriptstyle G\)};
\end{tikzpicture}
\caption{The optimal route (orange) in the golden gnomon:
\(\mathcal E(G)=1.282676025459\ldots\).}
\label{fig:path}
\end{figure}

\section{Introduction}

Bellman's lost-in-a-forest problem asks for the shortest route that is
guaranteed to reach the boundary of a forest whose shape is known but in
which the starting position and heading are unknown
\cite{bellman1956,finchwetzel2004}.  For a convex forest \(K\), this is
equivalently the shortest rectifiable curve no congruent copy of which is
contained in \(\operatorname{int}K\).  Such a curve will be called an
\emph{escape path}.
Here a placement may be taken to mean a translation followed by a
rotation.  For the reflection-symmetric triangle \(G\), allowing all
Euclidean isometries gives the same notion: compose any
orientation-reversing placement with a reflection preserving \(G\).

Exact solutions are rare.  They include the disk and the classical
known-distance half-plane search variant
\cite{gross1955,isbell1957,joris1980}; the strip, whose optimum is
Zalgaller's curve---``caliper shaped,'' in the description of
\cite{gibbs2016}---of length
\(\zeta=2.27829164144\ldots\)
\cite{zalgaller1961,schaer1968,adhikaripitman1989,chan2003}; several
rectangles, sectors, and fat convex regions
\cite{gerrietspoole1974,finchwetzel2004,ward2008,panraksa2021}; the
equilateral triangle \cite{besicovitch1965,coultonmovshovich2006}; and
isosceles triangles with base angle in \([45^\circ,60^\circ]\)
\cite{movshovichwetzel2011,movshovich2012}.  We know of no previous
proved exact answer for an isosceles triangle with base angle below
\(45^\circ\); the available proposed paths were numerical
\cite{gibbs2016,ward2008}.  See \cite{movshovich2025} for a
recent survey.  A very recent formulation reduces finite
discretizations of the general problem to traveling salesman with
neighborhoods and proves convergence as the discretization is refined
\cite{deng2026}; it does not identify the exact optimizer for these
triangles.  Concurrent certified work proves that Wetzel's
\(30^\circ\!-\!60^\circ\!-\!90^\circ\) triangle, and a slightly smaller
homothet, cover every unit arc \cite{wichiramalapanraksa2026}; that result
concerns a nonisosceles cover and does not claim a sharp homothet or an
extremal escape curve.

Gibbs's computation is especially prescient here: it places the
\(36^\circ\) triangle in a ``Tunnel'' phase with the same qualitative
ingredients---two circular shoulders, tangent segments, and a central
truncation \cite{gibbs2016}.  The contribution below is to replace that
numerical conjecture by an exact construction and, more importantly, by
a global lower bound against every rectifiable competitor.

The \(45^\circ\) threshold is also a change of proof mechanism.  In the
known isosceles range the critical curve is the three-segment
Besicovitch zee, and the lower bound is organized around a finite
classification of tetral arcs \cite{movshovichwetzel2011}.  At the
golden gnomon the extremal curve has moving circular contacts and seven
pieces.  Our replacement for the tetral classification is the balanced
support calibration: normal-cone aggregation recovers a finite vector
problem, while the geometric part of the proof is reduced to forcing the
correct temporal order of those vectors.

We determine one such triangle exactly.  Put
\[
  \beta=\frac{\pi}{5},\qquad s=\sin\beta,\qquad c=\cos\beta,
\]
and let
\[
  G=\conv\{(-c,0),(c,0),(0,s)\}.
\]
The two equal sides of \(G\) have length \(1\), its base angles are
\(36^\circ\), and its apex angle is \(108^\circ\).  This triangle is the
\emph{golden gnomon}.

The angle is useful for two independent reasons.  Geometrically,
\(36^\circ\) lies strictly inside the moving-contact ``Tunnel'' regime
predicted in \cite{gibbs2016}, rather than at a phase transition or a
polygonal degeneration.  Our own numerical continuation of the whole
isosceles family, on which nothing below depends, places that regime
between a transversal crossing with the Besicovitch--Movshovich zee near
\(42.29^\circ\) and a tangential merge into the scaled Zalgaller curve
near \(27.6^\circ\), with circular contacts present throughout below
\(42.9^\circ\); the golden gnomon lies at least \(6^\circ\) from either
end, and at the upper endpoint \(45^\circ\) the branch degenerates to a
rectangle path of length \(\sqrt2\), tying the diameter of the right
isosceles triangle.  Algebraically,
\(2c=(1+\sqrt5)/2\) and \(s^2=(5-\sqrt5)/8\); the contact and calibration
equations consequently collapse to one quartic with explicit low-degree
algebraic coefficients.  The golden gnomon is therefore a natural exact
test case for the new proof mechanism, not merely a mnemonic special
angle.

For a compact convex set \(K\), write
\[
 \mathcal E(K)=\inf\{\len\gamma:\gamma\text{ is an escape path for }K\}.
\]
Our main theorem gives both the value of \(\mathcal E(G)\) and an extremal
curve.  The constant is exact: it is determined by a uniquely isolated
zero of the explicit quartic \eqref{eq:calibration-quartic}.

\begin{theorem}\label{thm:main}
Let \(a,b,\lambda\) be the exact calibration parameters defined in
Section~\ref{sec:construction}, and put
\[
  C=2sc\left(\frac{b-a}{c}+\lambda\right).
\]
Then
\[
                            \mathcal E(G)=C.
\]
The minimum is attained by the seven-piece path \(\Gamma\) in
\eqref{eq:path}.
\end{theorem}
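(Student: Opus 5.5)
The proof splits into an upper bound and a matching lower bound.

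\emph{Upper bound} \(\mathcal E(G)\le C\). We show that the seven-piece path \(\Gamma\) of \eqref{eq:path} is an escape path of length exactly \(C\).

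\begin{enumerate}
\item \emph{Length.} Summing the two segments, the two circular shoulders of radius \(s\) and the three tangent pieces, with the parameters \(a,b,\lambda\) fixed by the quartic \eqref{eq:calibration-quartic}, should telescope to \(2sc\bigl((b-a)/c+\lambda\bigr)\). This is a direct computation.
\item \emph{Escape.} Here I would use support functions. A rotated copy \(R_\theta\Gamma\) translates into \(\operatorname{int}G\) if and only if, for the three outer unit normals \(u_1,u_2,u_3\) of \(G\),
\[
\sum_i w_i\,\bigl(h_{R_\theta\Gamma}(u_i)+h_{R_\theta\Gamma}(-u_i)\bigr)<\text{(corresponding combination of widths)},
\]
where the weights \(w_i\) come from the linear relation \(\sum w_iu_i=0\) among the normals. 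This is the standard criterion that a set fits in a triangle iff its support numbers satisfy one linear inequality.
\item \emph{Checking all \(\theta\).} I would verify that \(\theta\mapsto\) (defect) is \(\ge 0\) on the whole circle. Split the circle into the eighteen support windows on which the contact vertices of \(\Gamma\) (endpoints, \(K\), or points moving on the shoulders) are fixed. On each window the defect is an explicit trigonometric expression. It vanishes identically on the circular-contact windows; this is the reason for the shoulders. On the remaining windows it has a sign certified by the finite algebraic certificates.
\end{enumerate}

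\emph{Lower bound} \(\mathcal E(G)\ge C\). This is the main obstacle. Suppose \(\gamma\) is rectifiable with \(\len\gamma<C\); we must show some rotation of \(\gamma\) fits in \(\operatorname{int}G\).

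\begin{enumerate}
\item \emph{Calibration.} Integrate the escape inequality of the upper-bound step against the balanced calibration measure on the rotation circle, namely the measure for which \(\Gamma\) saturates every window. This gives one weighted inequality that every escape path must satisfy. Aggregating along the normal fan, this inequality reduces to a bound
\[
\sum_k \langle v_k, \gamma(t_k)\rangle\le\ \text{RHS},
\]
where \(v_k\) is a finite family of vectors summing to zero and \(\gamma(t_k)\) are the support points attaining the relevant extremes.
\item \emph{Summation by parts.} Rewrite the sum as \(\sum_k\langle L_k,\gamma(t_{k+1})-\gamma(t_k)\rangle\), where the ledger \(L_k\) is the running suffix sum of the \(v_k\), taken in the temporal order of the \(t_k\). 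If every \(|L_k|\le1\), the sum is at most \(\len\gamma\). Comparing with the saturated value \(C\) then yields \(\len\gamma\ge C\).
\item \emph{Forcing the temporal order.} The hard part is that the ledger bound only holds when the support points occur along \(\gamma\) in the order realized by \(\Gamma\). Take a shortest hypothetical counterexample.
\begin{itemize}
\item A local two-gap surgery replaces subarcs between consecutive support times by chords or reorderings. This shows that any out-of-order pattern either shortens \(\gamma\) while preserving the escape property, contradicting minimality, or can be exchanged.
\item Cyclic bitonicity of the support-index sequence along the curve then restricts the remaining orders to the admissible one, or to cyclic shifts whose ledgers are also in the disk.
\end{itemize}
I would first try to prove bitonicity from convexity of \(\conv\gamma\): the support indices of a curve read around its hull are cyclically monotone, which limits the inversions to two gaps.
\end{enumerate}

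Finally, the ledger norms for the finitely many admissible orders are checked exactly, using the Lean-verified discrete bounds. Combining the two bounds gives \(\mathcal E(G)=C\), attained by \(\Gamma\).
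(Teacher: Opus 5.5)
Your upper bound follows the paper: the exact length identity plus the eighteen support windows. The lower bound, however, has a genuine gap exactly where you place the difficulty.

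\textbf{The order-forcing step.} Your mechanism is unsupported. Nothing you describe shows how a surgery could remove an out-of-order pattern while preserving escape, and the paper never does this. Its two-gap surgery yields only the metric bound \(\len\eta\ge(1+\sqrt2)h\) on the height of one selected arch; the remaining out-of-order contacts are tolerated, not removed.

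Cyclic bitonicity also does not follow from convexity of \(\conv\gamma\). The self-crossing path \(V_0,V_2,V_1,V_3\) through the vertices of a convex quadrilateral has ranks \(0,2,1,3\) read around the hull, which is not cyclically bitonic. Bitonicity needs a \emph{standard} path: simple, visiting exactly its hull vertices, each once. That is obtained only after replacing the rectifiable counterexample by a polygonal path that is length-minimal among paths with at most \(N\) segments, then uncrossing and deleting vertices (Proposition~\ref{prop:standard}). The same reduction is what makes normal-cone aggregation finite. Your plan applies both aggregation and bitonicity to a general rectifiable \(\gamma\).

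\textbf{What actually forces the order} is a chain of steps missing from your plan:
\begin{enumerate}
\item The \(\Lambda\)-configuration theorem supplies a gap \(FT\) and an opposite support vertex \(M\) with \(F\prec M\prec T\).
\item The calibration measure is rotated so that its atom points out of \(FT\), and the atom is split between \(F\) and \(T\).
\item The two-gap estimate gives \(h<C/(1+\sqrt2)<s\).
\item Place the gnomon's base on \(FT\) and slide until each arm supports. A five-point length bound exceeding \(129/100>C\) in all fourteen exceptional temporal orders shows that both arm contacts \(Z_1,Z_2\) lie strictly inside \((F,T)\).
\end{enumerate}
Only these two anchors, combined with bitonicity, confine the out-of-order contacts to one prefix of the first side sector and one suffix of the last. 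The ledger bound must then be proved for exactly these two migrations. That is an analytic statement over a continuum of cut points, not a finite check of ``cyclic shifts.'' For the suffix the triangle inequality fails, since \(\lambda c+47/125>1\), so a time-reversal and reflection argument is needed.

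\textbf{Two smaller errors.}
\begin{enumerate}
\item Your escape criterion with widths \(h(u_i)+h(-u_i)\) is false, because widths cannot distinguish \(H\) from \(-H\). A slightly shrunken copy of \(G\) fits in that orientation; its point reflection does not. The correct criterion uses one-sided supports: \(2c\,h_H(\phi)+h_H(\phi+\pi-\beta)+h_H(\phi+\pi+\beta)\ge2sc\).
\item The integrated escape inequality bounds the support total \emph{below} by \(C\), not above. Also, \(\Gamma\) is tight only on \(\supp\nu\), not on every window, and a general escaping path need not saturate anything.
\end{enumerate}
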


\begin{remark}
The constant is exact and transcendental; see
Corollary~\ref{cor:transcendence}.  Numerically
\[
              C=1.282676025459048056\ldots,
\]
which is approximately \(4.22\%\) below
\(\zeta\sin\beta=1.339146227260\ldots\), the length of Zalgaller's curve
scaled to the minimal width of \(G\).  This is the classical analytic
benchmark; Gibbs's unproved ``Tunnel'' phase had already predicted that
a shorter seven-piece path should exist \cite{gibbs2016}.
\end{remark}

\begin{corollary}[sharp golden-gnomon worm cover]\label{cor:worm-cover}
The homothet
\[
                              W=C^{-1}G
\]
contains a congruent copy of every rectifiable arc of length \(1\).
No smaller positive homothet of \(G\) has this property.  Numerically, its
scale is
\[
                              C^{-1}=0.779620091240\ldots .
\]
\end{corollary}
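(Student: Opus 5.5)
The corollary will follow from Theorem~\ref{thm:main} by scaling together with the standard duality between escape paths and worm covers. For \(t>0\) the map \(x\mapsto tx\) sends curves of length \(\ell\) to curves of length \(t\ell\), and it sends congruent copies inside \(\operatorname{int}K\) to congruent copies inside \(\operatorname{int}(tK)\). Hence \(\mathcal E(tK)=t\,\mathcal E(K)\), and \(\gamma\) is an escape path for \(K\) exactly when \(t\gamma\) is an escape path for \(tK\). The main point is to handle the open/closed distinction carefully. Escape is defined with respect to \(\operatorname{int}K\), whereas a cover is required to contain arcs in the closed set \(W\).

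\emph{Covering.} Let \(\alpha\) be a rectifiable arc of length \(1\), and suppose no congruent copy of \(\alpha\) lies in \(W=C^{-1}G\).

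\begin{itemize}
\item Rescaling, \(C\alpha\) has length \(C\), and no congruent copy of \(C\alpha\) lies in \(G\), hence in particular none lies in \(\operatorname{int}G\). So \(C\alpha\) is an escape path for \(G\).
\item We upgrade this to a strictly shorter escape path. The set of placements is compact modulo the part that matters, because a copy meeting \(G\) must lie within a bounded region. The condition ``copy \(\subset G\)'' is closed in the placement parameters. Therefore \(\delta(g)=\max_{x\in g(C\alpha)}\operatorname{dist}(x,G)\) is a continuous, positive function on the compact set of relevant placements, and so is bounded below by some \(\eta>0\).
\item Shrinking \(C\alpha\) by a factor \(1-\epsilon\) about its starting point moves every point by at most \(\epsilon C\). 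For small \(\epsilon\), every copy of the shrunk arc still leaves \(G\), so the shrunk arc is an escape path of length \((1-\epsilon)C<C=\mathcal E(G)\). This contradicts Theorem~\ref{thm:main}.
\end{itemize}

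\emph{Minimality.} Let \(t<C^{-1}\). The extremal path \(\Gamma\) of \eqref{eq:path} has length \(C\) and no congruent copy inside \(\operatorname{int}G\).

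\begin{itemize}
\item Take \(\alpha=C^{-1}\Gamma\), which has unit length. Suppose a copy of \(\alpha\) lay in \(tG\).
\item Then a copy of \(\Gamma\) would lie in \(tCG\). Since \(tC<1\), one can choose the homothety center at the incenter of \(G\), an interior point, so that \(tCG\subset\operatorname{int}G\). This contradicts \(\Gamma\) being an escape path.
\item Positive homothets of \(G\) differ from \(tG\) only by a translation, so this covers every smaller positive homothet.
\end{itemize}

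The numerical value \(C^{-1}=0.7796\ldots\) is then just the reciprocal of the constant in the Remark.

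The only delicate step is the compactness argument in the covering direction, where non-containment in the closed set must be converted into a quantitative margin so that a strictly shorter escape path exists. One alternative I considered is to replace \(\alpha\) by a slightly longer arc, but that is not clearly possible for arbitrary arcs. I would therefore use the shrinking argument about a fixed endpoint, together with continuity of the placement map in the Hausdorff distance.
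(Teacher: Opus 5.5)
Your proof is correct and follows essentially the paper's route. The covering half is the contrapositive of the paper's compactness argument. The paper shrinks \(\alpha\) by \(1-1/n\) about \(\alpha(0)\) and fits each shrunk copy into \(\operatorname{int}W\), since it is shorter than \(\mathcal E(W)=1\). It then extracts convergent rotations and bounded translations whose limit places \(\alpha\) in the closed set \(W\); you instead extract a uniform positive margin by compactness and shrink once. The sharpness half is the same move of putting a translate of the smaller homothet \(CrG\) inside \(\operatorname{int}G\), with your incenter homothety in place of the paper's explicit shift \((0,\eps)\), \(0<\eps<(1-Cr)s\).

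One point to tighten is the compact set of placements, which can be done in either of two ways. You can restrict to placements whose copy meets \(G\); then note that for all other placements the fixed base point \(g(x_0)\) of the shrunk arc already lies outside \(G\). Alternatively, restrict to placements with \(\delta(g)\le1\), outside which the margin is automatic.
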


\begin{proof}
Homogeneity gives \(\mathcal E(W)=1\).  Let \(\alpha\) be a unit arc,
translated so that \(\alpha(0)=0\), and put
\(\alpha_n=(1-1/n)\alpha\) for \(n\ge2\).  Since
\(\len\alpha_n<\mathcal E(W)\), the arc \(\alpha_n\) is not an escape
path, so there are rotations \(R_n\) and translations \(t_n\) with
\[
                         t_n+R_n\alpha_n\subset\operatorname{int}W.
\]
The rotations have a convergent subsequence.  The translations along
that subsequence are bounded because
\(t_n=t_n+R_n\alpha_n(0)\in W\), and hence have a further convergent
subsequence.  If \(R_n\to R\) and \(t_n\to t\), then, for every
parameter value \(u\),
\[
             t_n+R_n\alpha_n(u)\longrightarrow t+R\alpha(u).
\]
The left side lies in \(W\), so closedness gives
\(t+R\alpha(u)\in W\).  Thus \(W\) contains a congruent copy of the
whole arc \(\alpha\).

For sharpness, suppose \(rG\) with \(r<C^{-1}\) covered every unit arc.
It would cover \(\Gamma/C\).  Scaling that inclusion by \(C\) would put a
congruent copy of \(\Gamma\) in the smaller triangle \(CrG\), where
\(Cr<1\).  Write
\[
 G=\{(x,y):y\ge0,\quad sx+cy\le sc,\quad -sx+cy\le sc\}.
\]
If \(0<\eps<(1-Cr)s\), then \((0,\eps)+CrG\) has base margin \(\eps\)
and both oblique-side margins \((1-Cr)sc-c\eps>0\), so it lies in
\(\operatorname{int}G\).  It would contain a congruent copy of \(\Gamma\),
contrary to Theorem~\ref{thm:main}.
\end{proof}

\begin{figure}[ht]
\centering
\begin{tikzpicture}[
  x=1cm,y=1cm,>=stealth,
  engine/.style={draw=blue!45!black,fill=blue!3,rounded corners=3pt,
    inner xsep=8pt,inner ysep=5pt,align=center},
  primal/.style={draw=orange!75!black,fill=orange!5,rounded corners=3pt,
    inner xsep=7pt,inner ysep=4pt,align=center},
  proof/.style={draw=black!55,fill=black!2,rounded corners=3pt,
    inner xsep=7pt,inner ysep=4pt,align=center},
  arr/.style={->,thick,black!65}
]
  \node[engine] (engine) at (0,0)
    {\(\displaystyle
      2c\,h(\phi)+h(\phi+\pi-\beta)+h(\phi+\pi+\beta)\ge2sc\)\\[-1pt]
     \scriptsize balanced by \(2c\,n_0+n_1+n_2=0\)};
  \node[primal] (curve) at (-3.55,-1.35)
    {I. certified attainment\\[-1pt]\scriptsize
     \(\Gamma\) escapes and \(\len\Gamma=C\)};
  \node[proof] (competitor) at (3.55,-1.35)
    {a shortest polygonal counterexample\\[-1pt]\scriptsize
     \(\len\eta<C\)};
  \node[primal] (windows) at (-3.55,-2.65)
    {eighteen support windows\\[-1pt]\scriptsize verify the upper bound};
  \node[proof] (rigidity) at (3.55,-2.65)
    {II. minimizer rigidity\\[-1pt]\scriptsize
     \(\Lambda\)-arch \(+\) two anchors \(+\) bitonicity};
  \node[engine] (calibration) at (3.55,-4.05)
    {III. anchored calibration\\[-1pt]\scriptsize
     the unit-disk ledger gives \(\len\eta\ge C\)};
  \node[engine,fill=blue!7,font=\large] (answer) at (0,-5.45)
    {\(\mathcal E(G)=C\)};
  \draw[arr] (engine)--(curve);
  \draw[arr] (engine)--(competitor);
  \draw[arr] (curve)--(windows);
  \draw[arr] (competitor)--(rigidity);
  \draw[arr] (rigidity)--(calibration);
  \draw[arr] (windows.south)--(answer.north west);
  \draw[arr] (calibration.south)--(answer.north east);
\end{tikzpicture}
\caption{The proof at a glance.  One balanced escape inequality is read
once on the candidate and once on a shortest polygonal counterexample.
The only geometric bridge is the
minimizer-rigidity statement.}
\label{fig:proof-map}
\end{figure}

Section~\ref{sec:proof-spine} proves the theorem from these three
statements before the technical rigidity argument is opened.  The
self-contained local surgery and the anatomy of the two finite certificate
families are then given in the appendices.  Lean verifies those certificates
and algebraic identities; compactness, hull surgery, and planar incidence
remain ordinary mathematical arguments.

\section{The extremal curve}\label{sec:construction}

Write \(u_\theta=(\cos\theta,\sin\theta)\), let \(R_\theta\) denote
counterclockwise rotation through angle \(\theta\), and put
\(\sigma(x,y)=(-x,y)\).  Let \((a,b,\lambda)\) be the exact triple
constructed from one quartic root in Section~\ref{sec:exact-data}.
Geometrically, it is characterized by the three calibration equations
\[
\begin{aligned}
0={}&-\sin a-1
 -\frac{\sin(a+\beta)-\sin(b+\beta)}{2c}+\lambda s,\\
0={}&-\cos a
 -\frac{\cos(b+\beta)-\cos(a+\beta)}{2c}+\lambda c,\\
0={}&\frac{\sin(a-\beta)-\sin(b-\beta)}{2c}-\sin b.
\end{aligned}                                                    \tag{2.1}\label{eq:calibration-system}
\]
Numerically,
\[
 \begin{aligned}
 a&=-0.238925801982\ldots,&
 b&=-0.072710623595\ldots,&
 \lambda&=1.143232127327\ldots .
 \end{aligned}
\]
The decimals are only orientation; the definition and every calculation
below use the exact triple.

Let \(O=(x,y)\) and define
\[
\begin{aligned}
 A(\theta)&=O+s u_\theta,              &
 T_1&=A(a),& T_2&=A(b),\\
 \tau&=\frac{y+s\sin a}{\cos a},       &
 K&=T_1+\tau(\sin a,-\cos a),\\
 E&=T_2+d(-\sin b,\cos b).
\end{aligned}                                                    \tag{2.2}\label{eq:geometric-points}
\]
The triple \((x,y,d)\) is now defined as the unique solution of the three
active support equations
\[
\begin{aligned}
 -2cy-sE_x+cE_y+sK_x&=0,\\
 2cx+cE_x+sE_y+cK_x&=0,\\
 cE_y+sK_x-sc&=0. 
\end{aligned}                                                    \tag{2.3}\label{eq:contacts}
\]
The first two make the two circular support windows identically sharp;
the third supplies the remaining pair of exact endpoint contacts.
To make nonsingularity inspectable, write
\(c_a=\cos a,\ s_a=\sin a,\ c_b=\cos b,\ s_b=\sin b\).
After multiplying all three equations by \(c_a>0\) to clear the
denominator in \(\tau\), the coefficient matrix in the unknowns
\((x,y,d)\) is
\[
\mathsf M=
\begin{pmatrix}
0&-c\,c_a+s\,s_a&(c\,c_b+s\,s_b)c_a\\
4c\,c_a&c\,s_a+s\,c_a&(-c\,s_b+s\,c_b)c_a\\
s\,c_a&c\,c_a+s\,s_a&c\,c_b c_a
\end{pmatrix}.                                                  \tag{2.4}\label{eq:contact-matrix}
\]
Exact rational interval evaluation gives
\[
                              \det\mathsf M>3.                  \tag{2.5}\label{eq:contact-det}
\]
Thus the system has a unique solution.  The matrix, its Cramer
numerators, and the strict determinant bound are kernel-checked in
\code{GeometryCertificate.lean}; no floating-point inversion defines
the curve.

\begin{remark}[the arc radius is forced]\label{rem:radius}
The radius \(s\) appearing in \eqref{eq:geometric-points} is dictated
rather than chosen.  Anticipating the escape sum
\[
 2c\,h(\phi)+h(\phi+\pi-\beta)+h(\phi+\pi+\beta)
\]
of (3.1), suppose a shoulder arc of radius \(r\) and centre \(O\) is the
feature exposed in the first slot, while vertices \(P_1,P_2\) are
exposed in the other two.  On such a window the sum equals
\[
 \bigl(2c\,O+R_{\beta-\pi}P_1+R_{-\beta-\pi}P_2\bigr)
              \cdot u_\phi+2c\,r .
\]
The window is identically sharp precisely when the bracketed vector
vanishes, which is what the first two equations of \eqref{eq:contacts}
assert; the surviving constant is then \(2c\,r\).  Matching the escape
threshold \(2sc\) forces
\[
                              r=s .
\]
Both tight windows of Proposition~\ref{prop:attainment} are of this
type, so the curvature of the shoulders is a consequence of the escape
threshold rather than a modelling choice.
\end{remark}

For \(P,Q\in\R^2\) and \(\theta_0,\theta_1\in\R\), let
\[
 \begin{aligned}
 \seg{P}{Q}(r)&=(1-r)P+rQ,\\
 A_{\theta_0\to\theta_1}(r)
   &=A\bigl((1-r)\theta_0+r\theta_1\bigr),
 \qquad 0\le r\le1 .
 \end{aligned}
\]
Let \(\ast\) denote oriented concatenation, each factor occupying one
unit interval.  The curve is
\[
\begin{aligned}
\gamma={}&\seg{\sigma E}{\sigma T_2}\ast
 \sigma A_{b\to a}\ast\seg{\sigma T_1}{\sigma K}\ast\seg{\sigma K}{K}\\
 &{}\ast\seg{K}{T_1}\ast A_{a\to b}\ast\seg{T_2}{E},
\qquad \Gamma=\gamma([0,7]).
\end{aligned}                                                    \tag{2.6}\label{eq:path}
\]
The endpoints of adjacent factors agree, so \(\gamma\) is a continuous
oriented path from \(\sigma E\) to \(E\).  The formula makes its reflection
symmetry manifest.

The seven successive pieces and their exact lengths are
\[
\begin{array}{c@{\qquad}c@{\qquad}c}
\text{endpoints}&\text{type}&\text{length}\\ \hline
\sigma E\to\sigma T_2&\text{segment}&d\\
\sigma T_2\to\sigma T_1&\text{circular arc}&s(b-a)\\
\sigma T_1\to\sigma K&\text{tangent segment}&\tau\\
\sigma K\to K&\text{truncating segment}&2K_x\\
K\to T_1&\text{tangent segment}&\tau\\
T_1\to T_2&\text{circular arc}&s(b-a)\\
T_2\to E&\text{segment}&d.
\end{array}                                                     \tag{2.7}\label{eq:piece-table}
\]
The two arcs have centers \(\sigma O,O\), common radius \(s\), and common
sweep \(b-a\).  Thus \eqref{eq:geometric-points} and
\eqref{eq:piece-table} give every junction, both arc centers and radii,
and every piece length exactly.

Figure~\ref{fig:path} shows the endpoint-to-endpoint member of the active
support family.  Both tips lie on sides of \(G\), the first terminal
segment runs along the base, and the third side is supported at \(K\).
The exact rigid motion realizing this placement is recorded in
Section~\ref{sec:exact-data}; the proof of universality covers every
rigid placement through its support function.

\begin{lemma}[exact length]\label{lem:length}
The signs
\[
 -\frac{\beta}{2}<a<b<0,\qquad
 \tau>0,\qquad d>0,\qquad K_x>0,\qquad \lambda>0
\]
hold, and
\[
                        \len\Gamma=C.
\]
\end{lemma}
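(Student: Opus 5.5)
The proof splits into two independent parts: the sign conditions, which are inequalities about one exact algebraic point, and the length identity, which is an exact algebraic consequence of \eqref{eq:calibration-system} and \eqref{eq:contacts}.

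\textbf{Signs.} The triple \((a,b,\lambda)\) comes from an isolated root of the quartic in Section~\ref{sec:exact-data}. I would first enclose that root in a rational interval narrow enough to separate it from the other roots. I would then propagate the enclosure, with rational interval bounds for \(\sin\) and \(\cos\), to enclosures of \(a,b,\lambda\). This gives \(-\beta/2<a<b<0\) and \(\lambda>0\) with wide margins, as the decimals suggest.

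Next, \(c_a=\cos a>0\), and \eqref{eq:contact-det} gives \(\det\mathsf M>3\). So Cramer's rule expresses \((x,y,d)\) as ratios of the interval-evaluated numerators by \(\det\mathsf M\). From these I would bound \(d\), \(\tau=(y+s\sin a)/\cos a\), and
\[
K_x=x+s\cos a+\tau\sin a
\]
away from zero. The margins are large (e.g.\ \(K_x\approx0.285\) in Figure~\ref{fig:path}), so low-precision intervals should suffice. I would reuse the certificate already checked in \code{GeometryCertificate.lean}.

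\textbf{Length.} Summing the table \eqref{eq:piece-table} gives
\[
\len\Gamma=2d+2s(b-a)+2\tau+2K_x .
\]
Since \(C=2s(b-a)+2sc\lambda\), the claim reduces to the scalar identity
\[
d+\tau+K_x=sc\lambda .
\]
I would substitute the definitions \eqref{eq:geometric-points}:
\[
E=O+s u_b+d(-\sin b,\cos b),\qquad K=O+s u_a+\tau(\sin a,-\cos a).
\]
After multiplying by \(c_a\), both sides and the three equations \eqref{eq:contacts} become affine in \((x,y,d)\), with coefficients polynomial in \(c,s,c_a,s_a,c_b,s_b\).

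The key step is to find multipliers \(\mu_1,\mu_2,\mu_3\), rational in those trigonometric quantities, such that
\[
c_a(d+\tau+K_x)-\textstyle\sum_i\mu_i\,(\text{contact equation } i)
\]
has no \(x\), \(y\) or \(d\) terms. Matching the three coefficients is a \(3\times3\) linear system with matrix \(\mathsf M^{T}\), so the multipliers are uniquely determined. The leftover constant must then equal \(c_a\,sc\lambda\) modulo the calibration equations \eqref{eq:calibration-system}.

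Heuristically this is a duality statement: \eqref{eq:calibration-system} says the weighted normals \(2c\,n_0+n_1+n_2=0\) are balanced against the directions of the pieces. Summation by parts along the path should therefore turn the support equations into exactly the length of the non-arc pieces. I would use that to guess the \(\mu_i\) (likely \(\mu_3\) proportional to \(1/c\) and \(\mu_1,\mu_2\) involving \(s_a,c_a\)), then verify the final constant identity. For that verification I would use \(s^2+c^2=1\), \(4c^2=2c+1\), \(s_a^2+c_a^2=1\), \(s_b^2+c_b^2=1\), and the angle-sum expansions of \(\sin(a\pm\beta)\) and similar terms.

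\textbf{Main obstacle.} The hard step is this last elimination. The identity holds only on the variety cut out by \eqref{eq:calibration-system}, not identically in \(a,b\). I would carry it out as an ideal-membership certificate: an explicit polynomial combination of the calibration equations, the Pythagorean relations and the golden relation \(4c^2=2c+1\), checked symbolically. The cross-check is to evaluate both sides on the interval enclosure above; they agree to all certified digits (\(C\approx1.2827\)).
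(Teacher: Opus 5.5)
Your proposal is correct and follows the paper's proof: certified coarse boxes (with Cramer's rule on \(\mathsf M\)) give the signs, and the length reduces to \(K_x+\tau+d=sc\lambda\), proved as a linear combination of the contact equations \eqref{eq:contacts} modulo \eqref{eq:calibration-system}. The paper simply writes the multipliers down as \((\cos b-\cos a)/(2c)\), \((\sin b-\sin a)/(2c)\) and \(\lambda\); your guess \(\mu_3\propto 1/c\) is therefore off, but your \(\mathsf M^{T}\) system would recover the right values anyway. The paper also obtains \(\lambda>0\) from this identity rather than from an interval enclosure.
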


\begin{proof}
For the first bound, \(p\ge-1/8\), \(p<0\), and
\(\arctan p>p\) give
\[
             a=2\arctan p>-\frac14>-\frac{\pi}{10}
               =-\frac{\beta}{2};
\]
here \(\arctan p>p\) follows by integrating
\((\arctan t-t)'=-t^2/(1+t^2)\) from \(p\) to \(0\), and
\(\pi>5/2\).  For the remaining signs, the certificate uses only the
following coarse boxes:
\[
\begin{gathered}
 -0.121<p<-0.119,\qquad -0.037<q<-0.036<0,\\
 -0.374<x<-0.372,\qquad 0.208<y<0.210,\qquad
 0.290<d<0.292,\\
 0.971<\cos a<0.972,\qquad -0.237<\sin a<-0.236,\qquad
 0.070<\tau<0.074,\qquad s>0.587 .
\end{gathered}                                                   \tag{2.8}\label{eq:construction-sign-boxes}
\]
In particular \(p<q<0\), so monotonicity of \(\arctan\) gives
\(a<b<0\), while \(d,\tau>0\) are
visible.  Finally
\[
\begin{aligned}
K_x
 &=x+s\cos a+\tau\sin a\\
 &>-0.374+(0.587)(0.971)-(0.074)(0.237)
  >0.178>0.
\end{aligned}
\tag{2.9}\label{eq:kx-sign}
\]
Thus every piece in \eqref{eq:piece-table} is nondegenerate with a
comfortable margin.
The contact equations \eqref{eq:contacts} and the calibration equations
\eqref{eq:calibration-system}
give the exact identity
\[
                         K_x+\tau+d=sc\lambda.                  \tag{2.10}
\]
Indeed, if
\[
 A_0=\frac{\cos b-\cos a}{2c},\qquad
 B_0=\frac{\sin b-\sin a}{2c},
\]
then direct reduction by \eqref{eq:calibration-system} gives
\[
 K_x+\tau+d-sc\lambda
   =A_0F_1+B_0F_2+\lambda F_3,
\]
where \(F_1,F_2,F_3\) are the left sides of
\eqref{eq:contacts}.  Thus the right side
vanishes.  Since \(K_x+\tau+d>0\) and \(sc>0\), this identity also gives
\(\lambda>0\).  Consequently
\[
\begin{aligned}
\len\Gamma
 &=2\{K_x+\tau+s(b-a)+d\}\\
 &=2sc\lambda+2s(b-a)
 =2sc\left(\frac{b-a}{c}+\lambda\right)=C.
\end{aligned}
\]
The two algebraic identities are also kernel-checked in Lean.
\end{proof}

\section{The support inequality and certified attainment}

For a compact convex set \(H\), write
\[
 h_H(v)=\max_{x\in H}x\cdot v,\qquad
 h_H(\theta)=h_H(u_\theta).
\]

\begin{lemma}[reflection invariance]\label{lem:reflection-invariance}
For every reflection \(S\) of the plane, a compact set \(P\) escapes
\(G\) if and only if \(S(P)\) escapes \(G\).
\end{lemma}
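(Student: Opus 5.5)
The plan is to reduce the statement to the observation, already made in the introduction, that \(G\) is invariant under the reflection \(\sigma(x,y)=(-x,y)\), and that congruent placements may be taken to be translations followed by rotations. We read ``\(P\) escapes \(G\)'' as: no placement \(g\) (rotation composed with translation) has \(g(P)\subset\operatorname{int}G\). Since \(S\) is an involution, \(S(S(P))=P\), so it suffices to show one implication: if \(S(P)\) fails to escape, then \(P\) fails to escape.

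Suppose \(g(S(P))\subset\operatorname{int}G\) for some orientation-preserving isometry \(g\). First we write an arbitrary reflection as \(S=t\circ R\circ\sigma\), where \(t\) is a translation and \(R\) a rotation. This holds because \(S\circ\sigma\) is orientation-preserving, so \(S\circ\sigma = t\circ R\), and \(\sigma^2=\mathrm{id}\). Then \(g\circ S=g\circ t\circ R\circ\sigma\), and \(g'=g\circ t\circ R\) is orientation-preserving. Next we use \(\sigma(\operatorname{int}G)=\operatorname{int}G\), which holds because \(\sigma\) permutes the vertices \((\pm c,0),(0,s)\) and is a homeomorphism. This gives
\[
 \sigma\circ g'\circ\sigma(P)=\sigma\bigl(g(S(P))\bigr)\subset\operatorname{int}G .
\]
The map \(\sigma g'\sigma\) is a conjugate of an orientation-preserving isometry by \(\sigma\), so it is again orientation-preserving, hence a translation followed by a rotation. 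Therefore \(P\) does not escape.

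There is no real obstacle. The only care needed is the bookkeeping that conjugation by \(\sigma\) preserves the class of placements, which is immediate because \(\det\) is multiplicative on the linear parts. We should also note that compactness of \(P\) plays no role beyond matching the definition.
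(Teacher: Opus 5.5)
Your proof is correct and is essentially the paper's argument: both compose the given placement with the symmetry reflection \(\sigma\) of \(G\) to obtain an orientation-preserving placement, and both get the converse from \(S^2=\mathrm{id}\). The factorization \(S=t\circ R\circ\sigma\) is a harmless detour, since \(\sigma\circ g\circ S\) is already a product of two reflections and an orientation-preserving isometry, hence orientation-preserving.
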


\begin{proof}
Fix a reflection \(J\) in the symmetry axis of \(G\), so \(J(G)=G\).
If \(P\subset A(\operatorname{int}G)\) for a translation--rotation
placement \(A\), then
\[
 S(P)\subset SA(\operatorname{int}G)
             =SAJ(\operatorname{int}G).
\]
The composition \(SAJ\) is orientation-preserving, hence is again a
translation followed by a rotation.  Thus fitting of \(P\) implies
fitting of \(S(P)\).  Applying the same argument to \(S(P)\) and using
\(S^2=\mathrm{id}\) proves the converse.
\end{proof}

\begin{lemma}[escape criterion]\label{lem:escape}
A compact set \(P\) escapes \(G\) if and only if, for
\(H=\conv P\),
\[
 2c\,h_H(\phi)+h_H(\phi+\pi-\beta)+h_H(\phi+\pi+\beta)
       \ge 2sc                                                   \tag{3.1}
\]
for every \(\phi\in\R\).
\end{lemma}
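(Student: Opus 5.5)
The plan is to reduce fitting to the three support inequalities of \(G\) and then eliminate the translation using the linear relation among the normals. \(G\) is the intersection of three closed half-planes \(\{z: z\cdot n_i\le \kappa_i\}\). Here \(n_0=(0,-1)=u_{-\pi/2}\) with \(\kappa_0=0\), and \(n_{1,2}=(\pm s,c)\) with \(\kappa_{1,2}=sc\); these are \(u_{\pi/2-\beta}\) and \(u_{\pi/2+\beta}\). Its interior is the intersection of the corresponding open half-planes. A compact \(P\) lies in the interior of a translate \(t+\operatorname{int}G\) if and only if \(h_H(n_i)<\kappa_i+t\cdot n_i\) for all \(i\), where \(H=\conv P\). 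So \(P\) fits in some rotated copy iff there exist a rotation \(R_\psi\) and a \(t\) with
\[ h_H(R_\psi n_i)-t\cdot R_\psi n_i<\kappa_i,\qquad i=0,1,2, \]
using \(h_{R^{-1}H}(n)=h_H(Rn)\).

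\textbf{Step 1 (the balance relation).} Check that \(2c\,n_0+n_1+n_2=(0,-2c+2c)=0\). Multiplying the three strict inequalities by the positive weights \(2c,1,1\) and adding makes the \(t\)-terms cancel. This gives
\[ 2c\,h_H(R_\psi n_0)+h_H(R_\psi n_1)+h_H(R_\psi n_2)<2c\cdot0+sc+sc=2sc. \]
Write \(R_\psi n_0=u_\phi\) with \(\phi=\psi-\pi/2\). Then \(R_\psi n_1=u_{\phi+\pi-\beta}\) and \(R_\psi n_2=u_{\phi+\pi+\beta}\), since \(\pi/2-\beta-(-\pi/2)=\pi-\beta\) and likewise for the other normal. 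Hence fitting gives a \(\phi\) violating (3.1), which proves the ``if'' direction by contraposition: (3.1) for all \(\phi\) implies escape.

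\textbf{Step 2 (converse).} Suppose (3.1) fails at some \(\phi\), and take \(\psi=\phi+\pi/2\). We must find \(t\) making all three strict inequalities hold. Set \(\delta_i=\kappa_i-h_H(R_\psi n_i)\). These satisfy \(2c\delta_0+\delta_1+\delta_2>0\). The key point is that any two of the normals, say \(n_1,n_2\), are linearly independent. So we can choose \(t\) to prescribe the values \(t\cdot R_\psi n_1=-\delta_1+\eps\) and \(t\cdot R_\psi n_2=-\delta_2+\eps\), with small \(\eps>0\). The balance relation then forces
\[ t\cdot R_\psi n_0=-(t\cdot R_\psi n_1+t\cdot R_\psi n_2)/(2c)=(\delta_1+\delta_2-2\eps)/(2c). \]
The required inequality for \(i=0\), namely \(-t\cdot R_\psi n_0<\delta_0\), becomes \(-(\delta_1+\delta_2-2\eps)<2c\delta_0\). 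This holds for small \(\eps\) by the strict failure of (3.1). The other two inequalities read \(\delta_i-\eps<\delta_i\), which are true. So \(P\subset t'+R\operatorname{int}G\) for a suitable placement, i.e.\ \(P\) does not escape. Since the definition of placement is a translation followed by a rotation, I would simply note that \(\{R(\operatorname{int}G)+t\}\) ranges over all orientation-preserving images. Convexity lets me pass between \(P\) and \(H\) because half-plane containment only sees \(h_H=h_P\).

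\textbf{Main obstacle.} The only delicate point is bookkeeping: matching the angles \(\phi+\pi\pm\beta\) to the rotated outer normals with the correct signs and offsets \(\kappa_i\), and keeping strictness straight. Fitting in the \emph{interior} corresponds to strict support inequalities, so escape is exactly the non-strict inequality \(\ge\) in (3.1). Compactness of \(P\) guarantees that \(h_H\) is finite and that the maxima are attained, so no limiting argument is needed.
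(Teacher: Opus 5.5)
Your proof is correct and is essentially the paper's argument. In one direction, the balance identity \(2c\,n_0+n_1+n_2=0\) cancels the translation when the three strict support inequalities are summed with weights \(2c,1,1\); in the converse, you fix two support values through a nonsingular \(2\times2\) system and let the balance identity force the third strict inequality, exactly as the paper does, with only cosmetic differences (centred offsets \((0,sc,sc)\) instead of \((0,2sc,0)\), prescribing the two oblique normals rather than \(n_0,n_1\), and a merely small \(\eps\) instead of the explicit \(\eps=D/(2(2c+1))\)).
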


\begin{proof}
A congruent copy of \(P\) fits in \(\operatorname{int}G\) exactly when
the same is true of \(H\).  Placing \(G\) at rotation \(\phi\) and
translation \(t\) presents it as the intersection of three half-planes
with outward normals
\[
 n_0=u_\phi,\qquad n_1=u_{\phi+\pi-\beta},\qquad n_2=u_{\phi+\pi+\beta}
\]
and offsets \(0,\ 2sc,\ 0\) shifted by \(n_j\cdot t\); the reference
offsets are read from the copy of \(G\) translated so that its left base
vertex is at the origin, and any other reference point merely
reparametrizes \(t\).  Everything rests
on one identity,
\[
                 2c\,n_0+n_1+n_2=0,                              \tag{3.2}\label{eq:balance}
\]
which holds because \(n_1+n_2=2\cos\beta\,u_{\phi+\pi}=-2c\,n_0\).

Suppose first that (3.1) holds and that some placement contained \(H\),
so that \(h_H(n_j)<q_j+n_j\cdot t\) for \(j=0,1,2\), where
\((q_0,q_1,q_2)=(0,2sc,0)\).  Multiplying by the weights \(2c,1,1\) and
adding, the translation terms cancel by \eqref{eq:balance} and the
weighted offsets total \(2sc\); this contradicts (3.1).

Conversely, suppose (3.1) fails at some \(\phi\).  Put
\[
 D=2sc-\bigl(2c\,h_H(n_0)+h_H(n_1)+h_H(n_2)\bigr)>0,\qquad
 \eps=\frac{D}{2(2c+1)},
\]
and solve
\[
 n_0\cdot t=h_H(n_0)+\eps,\qquad
 n_1\cdot t=h_H(n_1)-2sc+\eps .
\]
This \(2\times2\) system has determinant
\(\sin\bigl((\phi+\pi-\beta)-\phi\bigr)=\sin\beta=s\ne0\), independent of
\(\phi\), so Cramer's rule writes \(t\) down explicitly.  The third value
is then forced by \eqref{eq:balance},
\[
 \begin{aligned}
 n_2\cdot t
 &=-2c\,(n_0\cdot t)-n_1\cdot t\\
 &=2sc-2c\,h_H(n_0)-h_H(n_1)-(2c+1)\eps\\
 &=h_H(n_2)+D-(2c+1)\eps
  =h_H(n_2)+\frac D2>h_H(n_2).
 \end{aligned}
\]
All three strict inequalities therefore hold, so
\(H\subseteq\operatorname{int}(G_\phi+t)\) and \(P\) does not escape.
\end{proof}

\begin{remark}
No separation, duality, or Farkas argument is used: the translation is
exhibited.  The identity \eqref{eq:balance} which eliminates it is the
same identity that makes the calibration measure of
Section~\ref{sec:calibration} balanced.  The translation invariance of the
problem and the calibration are thus one fact used twice, in opposite
directions.
\end{remark}

Local tangency alone is not global support: a line can touch a chain
near one point while another portion of the chain crosses to its far
side.  Monotone turning of the normal is what excludes this, and the
following principle makes the exclusion quantitative.

\begin{lemma}[one-turn support principle]\label{lem:one-turn-support}
Let \(\mathcal C\) be a closed, oriented line--circular-arc chain with
no zero-length piece.  Suppose its right-hand normal has an
unwrapped angle which is nondecreasing, including the normal-cone jumps
at vertices, and increases by exactly \(2\pi\) in one circuit.  Then
every local right-hand normal of \(\mathcal C\) is a global support normal
of the whole chain.
\end{lemma}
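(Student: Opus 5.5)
The plan is to show that the chain, traversed once, is the boundary of a convex region: it is a closed curve of total turning \(2\pi\) whose turning never reverses. Once that is established, supporting lines of a convex body at boundary points are global support lines, and local right-hand normals are exactly the outer normals of that body.

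First, parametrize \(\mathcal C\) by arclength \(z(t)\), \(t\in[0,L]\), with unit tangent \(T(t)=u_{\psi(t)}\), right-hand normal \(N(t)=u_{\psi(t)-\pi/2}\), and \(\psi\) an unwrapped angle. On segments \(\psi\) is constant, on arcs it is monotone, and at vertices it jumps. The hypothesis makes \(\psi\) nondecreasing overall, including these jumps, with \(\psi(L)-\psi(0)=2\pi\). Here the normal cone at a vertex is the fan of normals swept by the jump.

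Fix a local normal \(n=u_{\theta}\), attached either to an interior point of a piece or to an element of a vertex fan, and let \(p\) be the corresponding point of the chain. We need \(n\cdot(z-p)\le 0\) for every \(z\in\mathcal C\).

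The key step is to write \(g(t)=n\cdot z(t)\), so that \(g'(t)=n\cdot u_{\psi(t)}=\cos(\psi(t)-\theta)\). Rotate the unwrapped angle so that \(p\) corresponds to \(\psi-\theta\) passing through \(\pi/2\); this is possible because \(N=n\) means \(\psi=\theta+\pi/2\), or at a vertex \(\psi\) jumps across that value.

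Traverse the circuit starting at \(p\). Since \(\psi\) is nondecreasing and gains exactly \(2\pi\), the quantity \(\psi-\theta\) sweeps monotonically from \(\pi/2\) to \(5\pi/2\). Along the way \(g'\le0\) while \(\psi-\theta\in[\pi/2,3\pi/2]\), and \(g'\ge0\) afterwards. Hence \(g\) is nonincreasing and then nondecreasing, and closedness returns it to \(g(p)\) at the end. A function that first decreases, then increases back to its starting value, never exceeds that value, so \(g\le g(p)\) throughout, which is the support inequality.

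The main obstacle is the bookkeeping at the endpoints of the monotone sweep, in two situations. First, \(\psi\) may sit exactly at the boundary values \(\theta+\pi/2\) or \(\theta+3\pi/2\) on a whole segment, which is a flat piece parallel to the support line; there \(g'=0\), so this is harmless. Second, the jump at \(p\) itself must be split correctly when \(n\) is interior to a vertex fan; this is handled by starting the circuit at the vertex with \(\psi\) taken at the value \(\theta+\pi/2\) inside the jump.

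The hypothesis of no zero-length piece ensures that the parametrization is genuine, so vertex fans are well defined. It also ensures the turning is exactly \(2\pi\) rather than an artifact, which prevents an additional sign change of \(g'\) that a turning greater than \(2\pi\) would create. I would write the unimodality argument once, for an absolutely continuous \(g\) whose derivative has the sign pattern \(-,+\) on \([0,L]\) with \(g(0)=g(L)\), and apply it uniformly to both the piece case and the vertex case.
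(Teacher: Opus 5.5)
Your argument is correct and is essentially the paper's proof: you project onto \(n\), split the vertex jump at \(p\) so that the angle sweeps exactly one turn starting from \(n\), and combine the resulting decrease-then-increase sign pattern of the derivative with closedness to get \(g\le g(p)\). Your opening plan of first identifying \(\mathcal C\) with the boundary of a convex region is never actually used. That is fine: the paper also proves the global support inequality directly from the projection, with no simplicity or boundary identification.
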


\begin{proof}
Fix \(P\in\mathcal C\) and a unit vector \(n\) in its closed right-hand
normal cone.  Start the cyclic traversal at \(P\), splitting the
normal-cone jump there so that its unwrapped angle \(\alpha\) begins at
\(\arg n\) and ends at \(\arg n+2\pi\).  Parametrize each open piece by
arclength.  On every such piece,
\[
 \frac d{dr}\bigl((\mathcal C(r)-P)\cdot n\bigr)
   =-|\mathcal C'(r)|\sin\bigl(\alpha(r)-\arg n\bigr).
\]
The projection is therefore nonincreasing until the normal has turned
through \(\pi\), and nondecreasing thereafter.  It is continuous at the
vertices, so jumps of the normal angle do not change this conclusion.
The projection begins and ends at zero, hence never exceeds zero.  Thus
\[
                         (Q-P)\cdot n\le0
                         \qquad(Q\in\mathcal C),
\]
which is precisely the support assertion.
\end{proof}

\begin{lemma}[support fan of the candidate]\label{lem:candidate-fan}
Let \(H=\conv\Gamma\), write
\(\bar O=\sigma O,\ \bar K=\sigma K,\ \bar E=\sigma E\), and let
\(A_R,A_L\) be the
right circular shoulder and its reflection.  With angles read modulo
\(2\pi\), one may choose the exposed feature of \(H\) as follows:
\[
\begin{array}{c|c@{\qquad}c|c}
\text{normal interval}&\text{feature}&\text{normal interval}&\text{feature}\\ \hline
{[0,\pi/2]}&E&
{[\pi/2,\pi-b]}&\bar E\\
{[\pi-b,\pi-a]}&A_L&
{[\pi-a,3\pi/2]}&\bar K\\
{[3\pi/2,2\pi+a]}&K&
{[2\pi+a,2\pi+b]}&A_R\\
{[2\pi+b,2\pi]}&E&&
\end{array}                                                       \tag{3.3}\label{eq:candidate-fan}
\]
At a common endpoint either adjacent formula may be used; the two support
values agree there.
\end{lemma}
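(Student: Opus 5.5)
We will apply Lemma~\ref{lem:one-turn-support} to a closed line--circular-arc chain \(\mathcal C\) whose convex hull equals \(H=\conv\Gamma\), and whose pieces carry exactly the features listed in \eqref{eq:candidate-fan}.

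\textbf{The chain.} Traverse counterclockwise in the outward-normal sense. Start at \(E\) and take the segment \(E\to\bar E\), which is horizontal at height \(E_y\) with normal \(\pi/2\). From \(\bar E\) go down the left terminal segment \(\bar E\to\sigma T_2\), then along the arc \(A_L\) from \(\sigma T_2\) to \(\sigma T_1\), then along the tangent \(\sigma T_1\to\bar K\). Take the truncating segment \(\bar K\to K\). Return via \(K\to T_1\), the arc \(A_R\) from \(T_1\) to \(T_2\), and the segment \(T_2\to E\).

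\textbf{The normal angles.}
\begin{itemize}
\item On the arc \(A_R\) the outward normal at \(A(\theta)\) is \(u_\theta\), for \(\theta\in[a,b]\), i.e.\ \([2\pi+a,2\pi+b]\).
\item The segment \(K\to T_1\) has direction \(-(\sin a,-\cos a)\). Its right-hand normal is \(u_a\), so it continues tangentially into the arc.
\item The segment \(T_2\to E\) has direction \((-\sin b,\cos b)\) and normal \(u_b\), again tangent.
\item The bottom segment \(\bar K K\) has normal \(3\pi/2\).
\end{itemize}
The vertex normal cones are then:
\begin{itemize}
\item at \(K\): \([3\pi/2,2\pi+a]\);
\item at \(E\): from \(u_b\) through \(0\) up to \(\pi/2\);
\item at \(\bar E\) and \(\bar K\): the mirror images \([\pi/2,\pi-b]\) and \([\pi-a,3\pi/2]\).
\end{itemize}
The unwrapped angle is therefore nondecreasing with total turn \(2\pi\), provided each vertex jump is nonnegative. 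Concretely this needs \(a<b<0\), \(-\beta/2<a\) and \(b>-\pi/2\), all of which come from Lemma~\ref{lem:length}. The segments are nondegenerate because \(d,\tau,K_x>0\) and \(E_x>0\).

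\textbf{Applying the principle.} Lemma~\ref{lem:one-turn-support} now makes every listed feature a global support of \(\mathcal C\) for its normal interval. Since \(\Gamma\subset\mathcal C\) and every vertex of \(\mathcal C\) lies on \(\Gamma\), we have \(\conv\mathcal C=H\). Hence each listed feature also supports \(H\). At the endpoints of adjacent intervals both formulas attain the same maximum, which gives the final sentence of the statement.

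\textbf{Main obstacle.} The hard step is the orientation check. The chain must really be traversed with the right-hand normal turning positively, with \(E\) to the right of \(\bar E\) and \(E_y>K_y\), and the jump at \(E\) from \(b\) up through \(2\pi\) to \(5\pi/2\) must be positive. I expect this to need sign information on \(E_x>0\) and on \(E_y-T_2\) beyond what \eqref{eq:construction-sign-boxes} states directly. I would first derive these from the coarse boxes: \(E_x=x+s\cos b-d\sin b\) and \(E_y=y+s\sin b+d\cos b\), both comfortably positive numerically. I would also check that the segment \(T_2E\) turns left from \(\bar E E\) rather than crossing it.
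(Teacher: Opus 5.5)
Your proposal is correct and is essentially the paper's proof. Like the paper, you close \(\Gamma\) by the horizontal segment \(\seg{E}{\bar E}\) and read off the unwrapped right-hand normals \(\pi/2,\ \pi-b,\ [\pi-b,\pi-a],\ \pi-a,\ 3\pi/2,\ 2\pi+a,\ [2\pi+a,2\pi+b],\ 2\pi+b,\ 5\pi/2\) from the tangencies and the signs of Lemma~\ref{lem:length}. You then apply Lemma~\ref{lem:one-turn-support} and pass to \(H=\conv\Gamma\), since the closing segment lies in \(\conv\Gamma\).

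The extra checks you flag as the main obstacle, \(E_y>K_y\) and the left turn at \(E\), follow automatically once the piece directions are computed. The only genuinely needed extra fact is \(E_x>0\). The paper takes it from the certified enclosure \(E_x>23/100\), and your coarse-box estimate \(E_x=x+s\cos b-d\sin b>0\) also supplies it.
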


\begin{proof}
Close \(\Gamma\) by the segment \(\seg{E}{\bar E}\).  Traverse the resulting
closed line--arc chain from \(\bar E\) along \(\Gamma\) to \(E\), and then
back to \(\bar E\).  The certified geometry enclosure gives
\(E_x>23/100\), so this closing segment is nondegenerate.  Its successive
right-hand normals, unwrapped on the real line, are
\[
\begin{gathered}
 \frac{\pi}{2},\quad
 \pi-b,\quad [\pi-b,\pi-a],\quad \pi-a,\quad
 \frac{3\pi}{2},\\
 2\pi+a,\quad[2\pi+a,2\pi+b],\quad
 2\pi+b,\quad\frac{5\pi}{2}.                                    \tag{3.4}\label{eq:normal-turn}
\end{gathered}
\]
Indeed, the two straight pieces adjacent to each circular shoulder are
tangent to it by \eqref{eq:geometric-points}; reflection gives the left
half, the middle segment is horizontal with lower outward normal, and the
closing segment is horizontal with upper outward normal.  The signs in
Lemma~\ref{lem:length} give
\[
 \frac{\pi}{2}<\pi-b<\pi-a<\frac{3\pi}{2}
 <2\pi+a<2\pi+b<\frac{5\pi}{2},
\]
and all straight pieces and circular sweeps have positive length.
Thus the right-hand normal turns monotonically through exactly one full
turn.  Lemma~\ref{lem:one-turn-support} now makes every listed local
normal a support normal of the whole closed chain; no simplicity
assumption is needed.

Let \(\mathcal C\) denote the image of this closed chain.  Its added
closing segment lies in \(\conv\Gamma\), so
\[
                         \conv\mathcal C=\conv\Gamma=H.
\]
The listed normal cones cover one full turn.  Hence, for every direction
\(n\), Lemma~\ref{lem:one-turn-support} supplies a point \(P\) on the
corresponding listed feature such that
\[
                   Q\cdot n\le P\cdot n\qquad(Q\in\mathcal C).
\]
The same inequality holds for every convex combination of points of
\(\mathcal C\); therefore \(h_H(n)=P\cdot n\).  This proves directly
that the listed feature realizes the support of \(H\), without first
having to identify the closed chain topologically with \(\partial H\).

On the two circular pieces the support formula is respectively
\(\bar O\cdot u_\theta+s\) and \(O\cdot u_\theta+s\).  Between consecutive
edge normals the intervening vertex is exposed.  Reading these vertex
cones and arc-normal intervals from \eqref{eq:normal-turn} gives exactly
\eqref{eq:candidate-fan}, including the interval for \(E\) which wraps
across \(2\pi\).
\end{proof}

\begin{proposition}[certified attainment]\label{prop:attainment}
The path \(\Gamma\) escapes \(G\) and has length \(C\).
\end{proposition}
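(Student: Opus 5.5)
The length statement is already Lemma~\ref{lem:length}, so the whole task is to show that \(\Gamma\) escapes. By Lemma~\ref{lem:escape}, applied with \(H=\conv\Gamma\), escaping is equivalent to the inequality
\[
F(\phi)=2c\,h_H(\phi)+h_H(\phi+\pi-\beta)+h_H(\phi+\pi+\beta)\ge 2sc
\]
for every \(\phi\) modulo \(2\pi\). Lemma~\ref{lem:candidate-fan} gives \(h_H\) as an explicit piecewise formula. Each piece is either a vertex support \(P\cdot u_\theta\) with \(P\in\{E,\bar E,K,\bar K\}\), or an arc support \(O\cdot u_\theta+s\) (respectively \(\bar O\cdot u_\theta+s\)). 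Since the feature breakpoints of the table are \(\pi/2\), \(\pi-b\), \(\pi-a\), \(3\pi/2\), \(2\pi+a\), \(2\pi+b\), the plan is to pull them back to \(\phi\) through each of the three slots \(\phi\), \(\phi+\pi-\beta\) and \(\phi+\pi+\beta\). This partitions the circle into finitely many windows; counted with the reflection symmetry, these should be the eighteen support windows mentioned in the abstract. On each window the three slots expose fixed features, so \(F\) takes the form
\[
F(\phi)=V\cdot u_\phi+\kappa ,
\]
where \(V\) is a fixed vector built from \(O,\bar O,E,\bar E,K,\bar K\) by the rotations \(R_{\pm\beta-\pi}\), and \(\kappa\in\{0,s,2cs,\dots\}\) collects the constant contributions of the arc radii.

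I would then handle the windows in three groups. First, there are the two windows in which an arc occupies the \(2c\) slot and vertices occupy the other two. There \(V=0\) by the first two equations of \eqref{eq:contacts}, exactly as in Remark~\ref{rem:radius}, so \(F\equiv 2sc\) identically. Second, there are the windows adjacent to endpoint contacts, where the third equation \(cE_y+sK_x=sc\) makes \(F=2sc\) at a single endpoint of the window. Third, for every remaining window, the function \(V\cdot u_\phi\) is a sinusoid on an interval of length less than \(\pi\), so its minimum over the window is attained either at an endpoint or at an interior critical point. Whether such a critical point exists is decided by checking whether \(\arg(-V)\) lies in the window. So on each window it suffices to verify \(F\ge 2sc\) at the endpoints, together with \(\kappa-|V|\ge 2sc\) whenever the critical direction is interior. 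The reflection \(\sigma\) maps \(\phi\mapsto\pi-\phi\) and swaps the two oblique slots, which leaves \(F\) invariant and halves the work.

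All of these endpoint and critical-point values are explicit algebraic–trigonometric expressions in \(a,b,\lambda,x,y,d\). I would bound them by rational interval arithmetic on the certified boxes \eqref{eq:construction-sign-boxes}, refining the boxes where needed. The main obstacle is the tight windows. At a shared endpoint where \(F=2sc\) exactly, interval arithmetic cannot certify the inequality with a margin. There I would replace the numerical check by a one-sided derivative argument: show that \(F-2sc\) vanishes exactly at the endpoint by the identity coming from \eqref{eq:contacts}, and that its derivative \(V^\perp\cdot u_\phi\) has the correct sign into the window, again with interval margin. Then, away from a small neighbourhood of the endpoint, the inequality holds with strict slack. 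Bookkeeping the wraparound of the \(E\)-interval across \(2\pi\) is routine but error-prone, so I would enumerate the windows mechanically.
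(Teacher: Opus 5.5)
Your proposal is correct and is essentially the paper's proof. Both use the escape criterion read through the support fan of Lemma~\ref{lem:candidate-fan}, with eighteen windows on which the margin is a sinusoid \(P_i\cdot u_\theta+R_i\) and the reflection halving the work. In both, the two windows with an arc in the \(2c\) slot are identically sharp by the first two equations of \eqref{eq:contacts}, and every other window is settled by exact or interval-certified endpoint values together with a check that the antipodal minimum lies outside the window.

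Your one-sided derivative test at a tight endpoint is exactly the paper's antipode witness \eqref{eq:antipode-witness}: conditions L and R say \(m_i'(\xi_i)>0\) and \(m_i'(\xi_{i+1})<0\) respectively. On a window shorter than \(\pi\) either sign already forces the minimum to an endpoint, so the exact zero suffices and no neighbourhood splitting is needed. Note also that the zeros at the ends of the two sharp windows come from those windows themselves; the third equation of \eqref{eq:contacts} gives only the zero at the atom direction \(\phi=\pi/2\).
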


\begin{proof}
The length identity is Lemma~\ref{lem:length}.  It remains to verify the
escape condition.

Let \(H=\conv\Gamma\), put \(\rho=2sc\), let \(B\) be the unit disk, and
set
\[
 W=2cH+R_{\beta-\pi}H+R_{\pi-\beta}H.
\]
Here \(+\) denotes Minkowski addition.  Since
\[
 h_{R_\alpha H}(\phi)=h_H(\phi-\alpha),
\]
the left side of (3.1) is exactly \(h_W(\phi)\).  Since compact convex sets satisfy
\(A\subseteq B\) if and only if \(h_A\le h_B\), the required inequality
is equivalent to the geometric inclusion \(\rho B\subseteq W\).

Use the support fan of Lemma~\ref{lem:candidate-fan}.  Superpose its three
copies in \(W\), and parameterize the relevant half-turn by the physical
normal
\[
                         \widehat u_\theta=u_{\theta+3\pi/2}.
\]
Put
\[
\begin{aligned}
(\xi_0,\ldots,\xi_9)=\bigl(&0,\ \beta,\
 {\tfrac\pi2-\beta-b},\ {\tfrac\pi2-\beta-a},\
 {\tfrac\pi2+a},\ {\tfrac\pi2+b},\\
 &{\tfrac\pi2+\beta-b},\ {\tfrac\pi2+\beta-a},\
 \pi-\beta,\ \pi\bigr).
\end{aligned}                                                    \tag{3.5}\label{eq:fan-breaks}
\]
Their strict order is elementary rather than numerical.  Starting with
\(-\beta/2<a<b<0\), the only comparisons not equal to \(a<b\) or to
positivity of \(\beta\) reduce to
\[
 b<\frac{\pi}{2}-2\beta=\frac{\pi}{10},\qquad
 2b<\beta,\qquad
 a>2\beta-\frac{\pi}{2}=-\frac{\pi}{10}=-\frac{\beta}{2}.
\]
Thus
\[
                         0=\xi_0<\xi_1<\cdots<\xi_9=\pi,
\]
and every successive gap is automatically less than \(\pi\).
On the \emph{support window} \(I_i=[\xi_i,\xi_{i+1}]\), the three
entries in
the second column below are the exposed features in the three terms of
the support sum, in the order
\(2cH,R_{\beta-\pi}H,R_{\pi-\beta}H\).  Here is an explicit recipe for the finite
certificate.  Attach to each feature \(F\) a center \(p(F)\) and a
constant \(r(F)\):
\[
\begin{array}{c|cc}
F& p(F)&r(F)\\ \hline
\text{vertex }V&V&0\\
A_R&O&s\\
A_L&\bar O&s .
\end{array}
\]
Thus the support contributed by \(F\) in direction \(u\) is
\(p(F)\cdot u+r(F)\).  If the feature triple in row \(i\) is
\((F_{i,0},F_{i,1},F_{i,2})\), define
\[
\begin{aligned}
 Q_i={}&2c\,p(F_{i,0})
       +R_{\beta-\pi}p(F_{i,1})
       +R_{\pi-\beta}p(F_{i,2}),\\
 P_i={}&R_{\pi/2}Q_i,\\
 R_i={}&2c\,r(F_{i,0})+r(F_{i,1})+r(F_{i,2})-\rho .
\end{aligned}                                                    \tag{3.6}\label{eq:window-recipe}
\]
Because \(Q_i\cdot\widehat u_\theta=P_i\cdot u_\theta\), the margin is
\[
                            m_i(\theta)=P_i\cdot u_\theta+R_i.   \tag{3.7}
\]
If \(P_i\ne0\), writing \(P_i=|P_i|u_\psi\) shows that the only possible
interior minimum is the antipodal direction
\(\theta=\psi+\pi\pmod{2\pi}\); if \(P_i=0\), the margin is constant.

\begin{center}
\small
\setlength{\tabcolsep}{5pt}
\renewcommand{\arraystretch}{1.05}
\begin{tabular}{c|c|c|c}
\(i\)&exposed feature triple&
 \(\bigl(m_i(\xi_i),m_i(\xi_{i+1})\bigr)\)&antipode witness\\ \hline
0&\(K,E,\bar E\)&\(+,+\)&L\\
1&\(K,\bar E,\bar E\)&\(+,+\)&L\\
2&\(K,\bar E,A_L\)&\(+,+\)&L\\
3&\(K,\bar E,\bar K\)&\(+,0\)&L\\
4&\(A_R,\bar E,\bar K\)&\(0,0\)&\(m_4\equiv0\)\\
5&\(E,\bar E,\bar K\)&\(0,+\)&L\\
6&\(E,A_L,\bar K\)&\(+,+\)&R\\
7&\(E,\bar K,\bar K\)&\(+,+\)&R\\
8&\(E,\bar K,K\)&\(+,0\)&L
\end{tabular}
\end{center}
Here \(+\) means \(>3/500\).  For a nonactive row,
\[
\begin{array}{ll}
\mathrm L:\ \det(u_{\xi_i},-P_i)<-17/1000,&
\mathrm R:\ \det(-P_i,u_{\xi_{i+1}})<-17/1000 .
\end{array}                                                       \tag{3.8}\label{eq:antipode-witness}
\]
Since the interval is traversed counterclockwise and has length less
than \(\pi\), either inequality puts the antipode outside \(I_i\).
Thus the endpoint column proves \(m_i\ge0\) in every row.  The middle
row is sharper: the first two equations in
\eqref{eq:contacts} give \(P_4=0\), and its circular contribution is
\(\rho\), hence the margin constant is \(R_4=0\); it is identically
tangent to \(\rho B\).  Finally,
\(\sigma H=H\) and
\(\sigma R_\theta=R_{-\theta}\sigma\), so reflection fixes \(2cH\) and
exchanges the two rotated summands of \(W\).  Thus \(\sigma W=W\), and
for the parameter \(\widehat u_\theta\) above, reflection sends
\(\theta\) to \(-\theta\).  The reflected half-turn therefore supplies
the other nine windows.  Hence
\(\rho B\subseteq W\).

Exact rational interval arithmetic in \code{EscapeWindows.lean} checks
the fan order and every strict entry; the zeros are algebraic identities.
No angular sampling is used.
\end{proof}

\section{The anchored calibration}\label{sec:calibration}

We next turn (3.1) into a sharp lower bound.  As with calibrations
elsewhere in geometry, one fixed object---the source measure
below---pairs against every competitor at once and is exactly saturated
by the candidate.  Let
\(\tau_\delta(\phi)=\phi+\delta\) on the circle of angles, and define the
positive source measure
\[
\nu=\frac{\mathbf1_{[a,b]}}{2c}\,d\phi
    +\frac{\mathbf1_{[\pi-b,\pi-a]}}{2c}\,d\phi
    +\lambda\,\delta_{\pi/2}.
\]
Fold it through the three normals of the triangle---folding is
bookkeeping, each source angle being pushed through the three normal
shifts and the results added:
\[
 \mu=2c\,\nu+(\tau_{\pi-\beta})_\#\nu
                 +(\tau_{\pi+\beta})_\#\nu .                    \tag{4.1}\label{eq:fold}
\]
Thus integrating (3.1) against \(\nu\) is exactly integration of the
support function against \(\mu\).  More importantly, the balance is now
visible pointwise:
\[
 \int u_\theta\,d\mu(\theta)
 =\int\!\left(2c\,u_\phi+u_{\phi+\pi-\beta}
                    +u_{\phi+\pi+\beta}\right)d\nu(\phi)=0,
 \qquad
 2sc\,\nu(S^1)=2sc\left(\frac{b-a}{c}+\lambda\right)=C.         \tag{4.2}
\]
The first equality is precisely \eqref{eq:balance}; the second integrates
the right side of (3.1).  In the total \(C=2s(b-a)+2sc\lambda\), the
continuous mass accounts for the candidate's two circular shoulders and
the atom for its total straight length in \eqref{eq:piece-table}.
Call the running suffix total of a finite vector family---the mass
still assigned at or after a given time---its \emph{ledger};
Lemma~\ref{lem:ledger} will keep every ledger state of \(\mu\) in the
closed unit disk.  The parameters \((a,b,\lambda)\) thus have two jobs:
their mass identity (4.2) makes the calibrated total equal \(C\), while
the three equations of \eqref{eq:calibration-system} place the critical
ledger states on the unit circle---they are its unit-circle jump
conditions.

For a path \(\eta\) with hull \(H\), write
\[
                         I_\mu(\eta)=\int h_H(\theta)\,d\mu(\theta).
\]
The logical target of the section is the sandwich
\[
                         C\le I_\mu(\eta)\le\len\eta.
\]
The first inequality is obtained simply by integrating the valid escape
inequality (3.1) with the positive weights \(\nu\).  The second is a
different statement: it follows from balance, finite normal-cone
aggregation, and the temporal unit-disk ledger.  In particular, no
identity \(I_\mu(\eta)=C\) is assumed; a general escaping path may have
a larger support total.

Translate \(\nu\) by any common angle and fold it as in
\eqref{eq:fold}; this rotates every vector of \(\mu\) by that angle.
Because (3.1) holds at every source angle, both identities above and the
integrated lower bound remain unchanged.  We shall use this freedom to
align the atom \(2c\lambda u_{\pi/2}\) with the outward normal of a
chosen supporting gap.  In the rigidity argument that gap is normalized
as an upper support, so the displayed, unrotated ledger applies
literally.

For clarity, the resulting ledger can be written explicitly.  Put
\[
\begin{aligned}
 I_R&=(\sin b-\sin a,\ \cos a-\cos b),\\
 I_L&=(-(\sin b-\sin a),\ \cos a-\cos b),\\
 f&=\frac1{2c},\qquad Q_\pm=R_{\pi\pm\beta}.
\end{aligned}
\]
The consecutive vector blocks are
\[
\begin{array}{c@{\;}c@{\;}c@{\;}c@{\;}c}
\frac12Z_0,&R_1,&L_0,&R_2,&Z_1,\\
Z_2,&L_1,&R_0,&L_2,&\frac12Z_0,
\end{array}                                                       \tag{4.3}
\]
where
\[
\begin{array}{lll}
\frac12Z_0=(0,\lambda c),&
R_1=fQ_-I_R,&L_0=I_L,\\
R_2=fQ_+I_R,&
Z_1=(-\lambda s,-\lambda c),&
Z_2=(\lambda s,-\lambda c),\\
L_1=fQ_-I_L,&R_0=I_R,&L_2=fQ_+I_L.
\end{array}                                                       \tag{4.4}\label{eq:ledger-blocks}
\]

The following elementary form of the calibration principle is the
bridge from support geometry to path length.

Because a shortest competitor may be taken polygonal
(Proposition~\ref{prop:standard}), only componentwise integration over
finitely many intervals is needed.  The following observation is the
precise reason that an interval of support directions may be compressed
to one vector.

\begin{lemma}[normal-cone aggregation]\label{lem:normal-aggregation}
Let \(H\subset\R^2\) be compact and convex, let \(P\in H\), and let
\(\omega\) be a finite positive measure on a set \(J\) of directions.
Suppose
\[
                         P\cdot u_\theta=h_H(\theta)
       \qquad\text{for \(\omega\)-almost every \(\theta\in J\)}.
\]
If
\[
                              v=\int_Ju_\theta\,d\omega(\theta),
\]
then \(P\) also supports \(H\) in direction \(v\), and
\[
             h_H(v)=P\cdot v=\int_Jh_H(\theta)\,d\omega(\theta).
                                                                  \tag{4.5}\label{eq:aggregate}
\]
\end{lemma}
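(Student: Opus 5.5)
The plan is to integrate the pointwise support identity against \(\omega\), and then to use the elementary upper bound \(h_H(v)\ge P\cdot v\) together with sublinearity of \(h_H\) to turn the resulting linear identity into equality.

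Step one is linearity. Since \(v=\int_J u_\theta\,d\omega(\theta)\) is a vector-valued integral of a bounded measurable function against a finite measure, we can pull the fixed point \(P\) inside:
\[
P\cdot v=\int_J P\cdot u_\theta\,d\omega(\theta)=\int_J h_H(\theta)\,d\omega(\theta).
\]
The second equality uses the hypothesis, which holds \(\omega\)-almost everywhere. Integrability is not an issue: \(h_H\) is continuous on the circle and hence bounded, and \(\omega\) is finite.

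Step two squeezes \(h_H(v)\) from both sides. Because \(P\in H\), the definition of the support function gives \(h_H(v)\ge P\cdot v\). For the reverse inequality, take any \(x\in H\). Then
\[
x\cdot v=\int_J x\cdot u_\theta\,d\omega(\theta)\le\int_J h_H(\theta)\,d\omega(\theta)=P\cdot v.
\]
This holds because \(x\cdot u_\theta\le h_H(\theta)\) pointwise and \(\omega\) is positive. Taking the maximum over the compact set \(H\) yields \(h_H(v)\le P\cdot v\). Hence \(h_H(v)=P\cdot v=\int_J h_H\,d\omega\), which is \eqref{eq:aggregate}.

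Step three handles degenerate and interpretive cases. The phrase ``\(P\) supports \(H\) in direction \(v\)'' means \(x\cdot v\le P\cdot v\) for all \(x\in H\), and this is exactly what step two proved. It is meaningful even if \(v=0\), in which case both sides vanish. If \(v\neq0\), dividing by \(|v|\) gives support in the unit direction \(v/|v|\), by positive homogeneity of \(h_H\). I expect no real obstacle here. The only point needing care is remembering that \(h_H(v)\) is evaluated at a non-unit vector, using the definition \(h_H(v)=\max_{x\in H}x\cdot v\), so homogeneity makes the statement consistent with the angular notation \(h_H(\theta)\).
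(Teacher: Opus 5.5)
Your proof is correct and is essentially the paper's argument: integrating \(x\cdot u_\theta\le h_H(\theta)=P\cdot u_\theta\) against the positive measure \(\omega\) gives \(x\cdot v\le P\cdot v\) for every \(x\in H\), and linearity of the integral then gives the identity \eqref{eq:aggregate}. Two slips in your opening plan do not affect the argument you actually carry out: sublinearity of \(h_H\) is never used, and \(h_H(v)\ge P\cdot v\) is a lower bound, not an upper bound.
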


\begin{proof}
For every \(x\in H\), positivity of \(\omega\) gives
\[
 (P-x)\cdot v
   =\int_J (P-x)\cdot u_\theta\,d\omega(\theta)
   =\int_J\bigl(h_H(\theta)-x\cdot u_\theta\bigr)\,d\omega(\theta)
   \ge0.
\]
Thus \(P\cdot v=\max_{x\in H}x\cdot v=h_H(v)\).  Linearity of the
integral gives the remaining equality in \eqref{eq:aggregate}.
\end{proof}

Here is the finite reduction in full.  The normal fan of a polygon has
finitely many cones.  Pull their boundary directions back through the
three shifts in \eqref{eq:fold}.  Cutting at the resulting points
decomposes the continuous part of \(\nu\) into finitely many pairwise
disjoint Borel cells; assign each cut point to either adjacent cell,
which is immaterial because those points have zero Lebesgue mass.
Treat the atom as one additional cell.  On every continuous cell \(J\),
choose vertices that support the three shifted direction families for
\(\nu\)-almost every source angle; at the atomic cell choose any vertex
of each exposed face.  Put
\[
\begin{aligned}
 v_{J,0}&=2c\int_Ju_\phi\,d\nu(\phi),\\
 v_{J,1}&=\int_Ju_{\phi+\pi-\beta}\,d\nu(\phi),&
 v_{J,2}&=\int_Ju_{\phi+\pi+\beta}\,d\nu(\phi).
\end{aligned}                                                     \tag{4.6}\label{eq:chunk-vectors}
\]
At a fan boundary either adjacent vertex may be chosen; this changes a
null set.  An atomic vector at such a direction may be assigned to
either endpoint of the exposed edge, or split between them after the
aggregation.  Lemma~\ref{lem:normal-aggregation}
applied to the three labelled families gives
\[
\begin{aligned}
 h_H(v_{J,0})+h_H(v_{J,1})+h_H(v_{J,2})
  =\int_J\!\bigl(2c\,h_H(\phi)
  +h_H(\phi+\pi-\beta)+h_H(\phi+\pi+\beta)\bigr)\,d\nu(\phi).
\end{aligned}
                                                                    \tag{4.7}\label{eq:chunk-support}
\]
Moreover, the balance identity holds on each cell, not merely after all
cells are added:
\[
 v_{J,0}+v_{J,1}+v_{J,2}
 =\int_J\!\bigl(2c\,u_\phi+u_{\phi+\pi-\beta}
                         +u_{\phi+\pi+\beta}\bigr)\,d\nu(\phi)=0.
                                                                    \tag{4.8}\label{eq:chunk-balance}
\]
Thus the support integral is exactly a finite balanced list of supported
vectors.  No measurable selection or interchange of two integrals is
being suppressed; what remains is Abel summation.

\begin{lemma}[directed support calibration]\label{lem:calibration}
Let \(P_0,\dots,P_N\) be the vertices of a polygonal path in temporal
order and let \(v_1,\dots,v_m\in\R^2\) satisfy \(\sum_jv_j=0\).  For each
\(j\) let \(\iota(j)\in\{0,\dots,N\}\) be an index with
\[
 v_j\cdot P_{\iota(j)}=h_{\conv P}(v_j),
\]
and put
\[
             R_i=\sum_{j:\ \iota(j)\ge i}v_j .
\]
If \(|R_i|\le1\) for \(i=1,\dots,N\), then
\[
 \sum_j h_{\conv P}(v_j)\ \le\ \sum_{i=0}^{N-1}|P_{i+1}-P_i| .
                                                                 \tag{4.9}\label{eq:abel}
\]
\end{lemma}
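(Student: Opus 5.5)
The proof is an Abel summation (summation by parts), organized by the temporal index of the supporting vertex. Group the vectors by their anchor: put \(w_i=\sum_{j:\iota(j)=i}v_j\) for \(i=0,\dots,N\). The support hypothesis gives
\[
 \sum_j h_{\conv P}(v_j)=\sum_j v_j\cdot P_{\iota(j)}=\sum_{i=0}^N w_i\cdot P_i ,
\]
so the left side of \eqref{eq:abel} is exactly a finite pairing of vertices against the grouped vectors. No inequality is lost at this step; equality is used precisely where the hypothesis is stated.

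Next rewrite the grouped vectors through the suffix sums. By definition \(R_i=\sum_{k\ge i}w_k\), so \(w_i=R_i-R_{i+1}\) with the convention \(R_{N+1}=0\). The balance hypothesis \(\sum_jv_j=0\) says \(R_0=0\). Summation by parts then gives
\[
 \sum_{i=0}^N (R_i-R_{i+1})\cdot P_i
 =R_0\cdot P_0+\sum_{i=1}^{N}R_i\cdot(P_i-P_{i-1})
 =\sum_{i=0}^{N-1}R_{i+1}\cdot(P_{i+1}-P_i).
\]
Balance is what removes the boundary term \(R_0\cdot P_0\). This is also why the result does not depend on the choice of origin: a translation \(P_i\mapsto P_i+t\) changes the sum by \(\bigl(\sum_jv_j\bigr)\cdot t=0\).

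Finally, apply Cauchy--Schwarz to each edge term, \(R_{i+1}\cdot(P_{i+1}-P_i)\le|R_{i+1}|\,|P_{i+1}-P_i|\le|P_{i+1}-P_i|\), using the hypothesis \(|R_i|\le1\) for \(i=1,\dots,N\). Summing over \(i\) gives \eqref{eq:abel}.

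The only step requiring care is the index bookkeeping. One has to check that after the shift the ledger states actually appearing are \(R_1,\dots,R_N\), which are exactly the constrained ones. The unconstrained states are \(R_0\), which vanishes by balance, and \(R_{N+1}\), which is zero by convention. One must also check that several vectors sharing an anchor vertex, or an index \(\iota(j)=0\), cause no trouble; grouping the vectors into \(w_i\) handles both cases. No other obstacle is expected.
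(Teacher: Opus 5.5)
Your proposal is correct and is essentially the paper's own proof: the paper writes \(P_{\iota(j)}=P_0+\sum_{i<\iota(j)}(P_{i+1}-P_i)\), cancels the \(P_0\) terms by \(\sum_jv_j=0\), and exchanges the finite sums to get \(\sum_{i=0}^{N-1}R_{i+1}\cdot(P_{i+1}-P_i)\). Your grouping into \(w_i=R_i-R_{i+1}\) is the same Abel summation written in the other direction, and your index bookkeeping, with \(R_0=0\) and \(R_{N+1}=0\), checks out.
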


\begin{proof}
Write \(P_{\iota(j)}=P_0+\sum_{i<\iota(j)}(P_{i+1}-P_i)\).  The \(P_0\)
terms cancel because \(\sum_jv_j=0\), and exchanging the two finite sums
gives
\[
 \sum_j v_j\cdot P_{\iota(j)}
   =\sum_{i=0}^{N-1}R_{i+1}\cdot(P_{i+1}-P_i)
   \le\sum_{i=0}^{N-1}|R_{i+1}|\,|P_{i+1}-P_i| .
\]
Now apply \(|R_{i+1}|\le1\).
\end{proof}

Applied to the vectors \eqref{eq:chunk-vectors}, the left side of
\eqref{eq:abel} is exactly the calibrated total by
\eqref{eq:chunk-support}; their total sum is zero by
\eqref{eq:chunk-balance}.  The integrated escape inequalities bound that
same total below by \(C\), so a unit-disk suffix bound implies
\(\len P\ge C\).  Splitting an atom between two contacts realizing the
same support value merely replaces one \(v_j\) by two parallel vectors;
this is used for the two endpoints of the distinguished gap.

\begin{remark}
Stating Abel summation directly for a continuum of contacts would require
a measure on \(S^1\times[0,L]\), a Fubini identity, and a
measurable-selection argument.  Lemma~\ref{lem:normal-aggregation} shows
why none of that is needed for a polygon: positivity preserves a normal
cone, and the finite normal fan supplies the chunks.
\end{remark}

The data produced by the finite reduction---the source cells, their
three aggregated vector pieces, and the supporting vertex chosen for
each piece---form a \emph{chunked support marking} of the path.
Call a chunked support marking \emph{anchored} if its vector pieces occur
in the cyclic order (4.3), except that a prefix of
\((R_1,L_0,R_2)\) may occur before the first half of \(Z_0\), and a
suffix of \((L_1,R_0,L_2)\) may occur after the last half of \(Z_0\).
Subdivision of a displayed block is harmless, and an atom may be split
between two contacts with the same support value.  This definition
isolates exactly the order information used by the lower bound.

\begin{lemma}[anchored ledger bound]\label{lem:ledger}
Traverse the vector blocks in their temporal order, and let \(R(t)\)
denote the vector mass still assigned at or after time \(t\); at polygon
vertices these are the suffix states \(R_i\) of
Lemma~\ref{lem:calibration}.
For every anchored marking, \(|R(t)|\le1\).
\end{lemma}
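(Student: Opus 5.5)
The approach is to reduce the statement to a finite check on the convex polygon traced by the ledger, followed by an analysis of the permitted deviations (splits, subdivisions, and the wrapped prefix/suffix). First take the canonical cyclic order (4.3) with no subdivision. Because the total is zero by (4.2), the suffix states are the negatives of the partial sums. Inside a continuous block the vector mass is \(\int_J u_\theta\,d\nu\) (with the appropriate shift), so as the cut point moves through the block the ledger traces a circular arc of radius \(f\cdot 2c=1\) or of radius \(f\). Concretely, the \(2c\)-weighted blocks \(L_0,R_0\) have density \(1\), while \(R_1,R_2,L_1,L_2\) have density \(f<1\). Between blocks the atom pieces \(\tfrac12 Z_0,Z_1,Z_2\) are straight jumps.

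I would compute the ledger states at the block boundaries exactly:
\[
 S_0=0,\quad S_1=-\tfrac12Z_0,\quad S_2=S_1-R_1,\quad\dots,
\]
with every point written in the variables \(\sin a,\cos a,\sin b,\cos b,\lambda\). I expect the calibration system (2.1) to say precisely that certain of these states lie on the unit circle. These are the critical states, for instance immediately before and after the \(Z_1,Z_2\) jumps and at the ends of the density-one blocks \(L_0,R_0\). The three equations of (2.1) match the three jump conditions (the \(x\)- and \(y\)-components at one critical state, plus one more), and reflection symmetry \(\sigma\) supplies the mirrored states. The remaining block-endpoint states are then checked to lie strictly inside the disk using the rational enclosures (2.8). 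This is the finite algebraic certificate.

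Between block endpoints, convexity gives the bound. On a straight atomic jump, \(|R|^2\) is a convex function of the parameter, so it is maximized at an endpoint. On a density-\(f\) arc block, the ledger runs along a circle of radius \(f<1\). On a density-one block, the ledger runs along a unit-radius arc, and if the arc begins on the unit circle I would show it is \emph{the} unit circle itself, with the arc's centre at the origin. In general I would bound \(|R(t)|^2\) along an arc \(c_0+\rho u_\theta\) by checking that \(|c_0+\rho u_\theta|\) is monotone or unimodal in \(\theta\) over an angular range shorter than \(\pi\). This again reduces to endpoint values together with one antipode test, in the same style as the witnesses (3.8).

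The anchored relaxations then need separate treatment. Subdivision and atom-splitting only insert intermediate points on segments and arcs already traced, so they are covered by the previous step. The nontrivial case is when a prefix of \((R_1,L_0,R_2)\) is moved before the first \(\tfrac12Z_0\), or a suffix of \((L_1,R_0,L_2)\) is moved after the last \(\tfrac12Z_0\). In either case the path of ledger states changes: a partial block is traversed, and then the \(\tfrac12 Z_0\) jump is applied from a different base point. I would parametrize the moved prefix by its cut angle \(t\in[a,b]\) (or its counterpart in the later blocks). The new states are then the old states translated by \(\pm\tfrac12 Z_0\) at one end. Bounding them reduces to showing that the arcs obtained by translating the relevant initial portions by \((0,\lambda c)\) stay in the disk, and I would again use a convexity or one-antipode argument for each.

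I expect the main obstacle to be these shifted configurations, because some of the shifted states touch the unit circle: the optimizer saturates there. An interval certificate alone cannot close this, so it needs an exact tangency argument. At the tangent point I would verify, from (2.1), that the derivative of \(|R|^2\) vanishes and its second derivative is negative, and then handle the rest of the range with strict rational margins.
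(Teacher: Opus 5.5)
Your plan for the displayed ten-block walk is the paper's own. The three equations of \eqref{eq:calibration-system} place the junctions on the unit circle exactly: in the paper they are the two vector identities \(X-R_1=U_b^-\) and \(U_a^--R_2-Z_1=e\), with the second equation used in both. The density-one blocks are identically \(U_t^{\mp}\), and the other junctions get strict interior checks. On the density-\(f\) arcs you use monotonicity (your one-antipode test is equivalent to the signs in \eqref{eq:radial-derivatives}), chords are handled by convexity, and reflection gives the second half. One correction: the states just before \(Z_1\) and just after \(Z_2\) are \(Y=(1-\lambda s,-\lambda c)\) and \(Y^*\). They lie strictly inside, because \(1-|Y|^2=\lambda(2s-\lambda)>0\); only the state \(e\) between the two jumps is a unit state.

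The real difference is in the migrations, and the obstacle you anticipate there does not exist. While a prefix \(\mathcal P\) of \((R_1,L_0,R_2)\) is removed before the first half-atom, the ledger is minus a partial sum of \(\mathcal P\). The triangle inequality therefore bounds its norm by the scalar mass of the whole sector, \((b-a)(1+1/c)=\sqrt5\,(b-a)\approx0.372<47/125\). The paper uses exactly this one scalar inequality in place of your translated-arc checks, which would succeed, but with margins above \(0.6\).

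No shifted state comes near the circle, and there is no isolated tangency anywhere in the walk. The circle is met only along \(L_0\) and \(R_0\), where \(R(t)=U_t^{\mp}\) exactly, and at \(e\). All of these are settled by the exact identities \eqref{eq:unit-ledger-blocks}--\eqref{eq:unit-ledger-junctions}, so the second-derivative step should be dropped. In a migrated configuration the only unit states are unchanged states of the walk, such as the far endpoint \(X-p\) of the half-atom chord, and convexity of the disk handles that chord.

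For the suffix, your bookkeeping correctly recognises that the state \(\tfrac12Z_0+\sum\mathcal S\) just before the final half-atom is an unchanged state of the walk. The naive triangle inequality would fail there, since \(\lambda c+47/125>1\). The paper reaches the same point by reversing time and applying \(\sigma\), which reduces to the prefix case. After that the same mass bound controls every later state.
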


\begin{proof}
Put \(U_t^\pm=(-\sin t,\pm\cos t)\) and \(e=(1,0)\).  Since the total
vector mass is zero, the state after removing the first half of \(Z_0\)
is
\[
                         X=-\frac12Z_0=(0,-\lambda c).
\]
Let \(Y\) be the state after \(R_2\), and write
\(Y^*=(Y_x,-Y_y)\).  The ten-block ledger is the following symmetric
walk.  An arrow labelled \(V\) means that \(V\) has just been removed
from the remaining mass, so the state changes by \(-V\):
\[
\begin{aligned}
0&\xrightarrow{\frac12Z_0}X
 \xrightarrow[\nearrow]{R_1}U_b^-
 \xrightarrow[\partial B]{L_0}U_a^-
 \xrightarrow[\searrow]{R_2}Y
 \xrightarrow{Z_1}e,\\
e&\xrightarrow{Z_2}Y^*
 \xrightarrow[\nearrow]{L_1}U_a^+
 \xrightarrow[\partial B]{R_0}U_b^+
 \xrightarrow[\searrow]{L_2}-X
 \xrightarrow{\frac12Z_0}0 .
\end{aligned}                                                   \tag{4.10}\label{eq:ledger-walk}
\]
Here \(\nearrow,\searrow\) mean that \(|R|^2\) is respectively strictly
increasing and decreasing along that block, while \(\partial B\) means
that the whole block lies on the unit circle.  Indeed, the two latter
states are exactly
\[
                         R_{L_0}(t)=U_t^-,
            \qquad       R_{R_0}(t)=U_t^+ .                    \tag{4.11}\label{eq:unit-ledger-blocks}
\]
The five unit junctions require no additional numerical check.  Direct
substitution in \eqref{eq:ledger-blocks} gives
\[
\begin{gathered}
 X-R_1=U_b^-,
 \qquad U_b^--L_0=U_a^-,
 \qquad U_a^--R_2-Z_1=e,\\
 e-Z_2-L_1=U_a^+,
 \qquad U_a^+-R_0=U_b^+ .
\end{gathered}                                                    \tag{4.12}\label{eq:unit-ledger-junctions}
\]
After the angle-addition formulas, the two coordinates of the first
identity are respectively the third and second equations of
\eqref{eq:calibration-system}; the two coordinates of the middle
identity are the first and second.  The fourth identity is the reflection
of the middle one, and the second and fifth follow immediately from
\(L_0=I_L\) and \(R_0=I_R\).  Thus every displayed junction lies on the
unit circle for a visible reason.  The two non-unit junctions are just as
transparent.  From the definition of \(X\), and from
\(Y-Z_1=e\) with \(Z_1=(-\lambda s,-\lambda c)\),
\[
\begin{gathered}
 X=(0,-\lambda c),\qquad
 Y=(1-\lambda s,-\lambda c),\\
 |X|^2=(\lambda c)^2,\qquad
 1-|Y|^2=\lambda(2s-\lambda).                                 \tag{4.13}\label{eq:strict-ledger-states}
\end{gathered}
\]
The coarse certified scalar bounds
\[
 0<\lambda<1.144,\qquad c<0.81,\qquad s>0.587
\]
give \(\lambda c<1.144\cdot0.81<1\) and
\(\lambda<1.144<1.174<2s\).  Hence both \(X\) and \(Y\) lie strictly
inside the unit disk.  This replaces a two-dimensional interval check
by two scalar comparisons with large margins.
The radial signs also have a short analytic proof.  Along \(R_1\) and
\(R_2\), respectively, the states are
\[
 S_1(t)=U_b^-+fQ_-\int_t^b u_\theta\,d\theta,\qquad
 S_2(t)=U_a^--fQ_+\int_a^t u_\theta\,d\theta .
\]
Rotation invariance of the dot product and one differentiation give
\[
\begin{aligned}
 \frac d{dt}|S_1(t)|^2
  &=-2f\{\sin(b+t-\beta)+f\sin(b-t)\},\\
 \frac d{dt}|S_2(t)|^2
  &=-2f\{\sin(a+\beta+t)-f\sin(t-a)\}.
\end{aligned}                                                   \tag{4.14}\label{eq:radial-derivatives}
\]
For \(a\le t\le b\), Lemma~\ref{lem:length} implies
\[
\begin{gathered}
 -2\beta<b+t-\beta<-\beta,\qquad
 0\le b-t<\frac{\beta}{2},\\
 0\le t-a<a+\beta+t<\beta .
\end{gathered}
\]
Since \(2\beta<\pi/2\), sine is increasing on \([0,2\beta]\); and
\(0<f<1\).  Thus
\[
 \sin(b+t-\beta)<-s,\qquad
 0\le f\sin(b-t)<\sin(\beta/2)<s,
\]
so the first brace in \eqref{eq:radial-derivatives} is negative.
Likewise
\[
 0\le f\sin(t-a)\le\sin(t-a)<\sin(a+\beta+t),
\]
so the second brace is positive.  Thus the two signs are \(+,-\).
Reflection gives the second line of \eqref{eq:ledger-walk} and the other
two signs \(+,-\).  Consequently every continuous block stays in the
unit disk.  The \(Z_j\) are atomic jumps; if an atom is split, its
intermediate states lie on the chord between its two displayed endpoints
and are safe by convexity of the disk.  This proves the bound for the
displayed order.

It remains only to move the allowed prefix and suffix.  The scalar mass
of either whole side sector is
\[
 (b-a)\left(1+\frac1c\right)
\]
because its three blocks have scalar densities
\(1/(2c),1,1/(2c)\) over an interval of length \(b-a\).
This mass is less than \(47/125<1\).  Suppose a prefix
\(\mathcal P\) is moved
before the first half-atom.  While \(\mathcal P\) is removed, the ledger
is the negative of one of its partial vector sums, so the triangle
inequality bounds its norm by the scalar mass of the whole sector,
hence by \(47/125\).  Let \(p\) be the total vector of \(\mathcal P\).
The state just before the half-atom is \(-p\); the state just after it is
the same state reached in the original walk after removing
\(\frac12Z_0\) and \(\mathcal P\), because vector addition commutes.
Both endpoints lie in the unit disk.  If the half-atom is split, all
intermediate states lie on their chord and are safe by convexity.
Thereafter every state is literally a state of
\eqref{eq:ledger-walk}.

A suffix \(\mathcal S\) moved after the final half-atom is treated by
reading this argument backwards, and it is worth making the reversal
explicit, because the naive triangle inequality is insufficient there:
just before the final half-atom the remaining mass is
\(\frac12Z_0+\sum\mathcal S\), and \(\lambda c+\frac{47}{125}>1\).
Reverse the temporal order of the whole marking.  Because the total mass
is zero, reversal negates every suffix state, so no norm changes; and
the reversed block order
\[
 \frac12Z_0,\ L_2,\ R_0,\ L_1,\ Z_2,\ Z_1,\ R_2,\ L_0,\ R_1,\ \frac12Z_0
\]
is exactly the image of \eqref{eq:ledger-walk} under the reflection
\(\sigma\), which fixes \(Z_0\) and exchanges
\(R_1\leftrightarrow L_2\), \(L_0\leftrightarrow R_0\),
\(R_2\leftrightarrow L_1\), \(Z_1\leftrightarrow Z_2\).  Since
\(\mathcal S\) is a normal-fan suffix of \((L_1,R_0,L_2)\), its reversal
is a fan prefix of the reflected opening sector \((L_2,R_0,L_1)\).  The
reversed marking is therefore the reflected walk with a prefix moved
before its first half-atom, and the prefix case just proved applies
verbatim; in particular the problematic state above is, up to sign and
reflection, a sector state of \eqref{eq:ledger-walk} rather than a
triangle-inequality estimate.  A marking with both a prefix and a
suffix is covered by the same two computations, which bound disjoint
portions of the walk: the early states by the prefix argument, the late
states by the reversal, and everything between by the displayed walk.
\end{proof}

\begin{proposition}[anchored calibration principle]
\label{prop:anchored-calibration}
Every standard polygonal escape path admitting an anchored support
marking has length at least \(C\).
\end{proposition}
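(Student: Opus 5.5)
The plan is to assemble the sandwich \(C\le I_\mu(\eta)\le\len\eta\) from the pieces already in place. Fix a standard polygonal escape path \(\eta\) with vertices \(P_0,\dots,P_N\) in temporal order, hull \(H=\conv\eta\), and an anchored support marking. This marking consists of the finite cell decomposition of \(\nu\) (possibly after a common rotation of the source angle, which by the remark after (4.2) leaves (4.2) and the integrated bound unchanged), together with the aggregated vectors \(v_{J,k}\) of \eqref{eq:chunk-vectors} and a supporting vertex \(P_{\iota(J,k)}\) for each.

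The first step is the lower bound. Since \(\eta\) escapes, Lemma~\ref{lem:escape} gives (3.1) at every source angle \(\phi\). Integrate it against the positive measure \(\nu\). The right side integrates to \(2sc\,\nu(S^1)=C\) by (4.2), and the left side is \(\sum_J\int_J(\cdots)\,d\nu\). By \eqref{eq:chunk-support}, which comes from Lemma~\ref{lem:normal-aggregation} applied cell by cell (atom included), this equals \(\sum_{J,k}h_H(v_{J,k})\). Hence \(C\le\sum_{J,k}h_H(v_{J,k})\).

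The second step is the upper bound. The family \(\{v_{J,k}\}\) has total sum zero, since it vanishes cellwise by \eqref{eq:chunk-balance}. Each vector is supported at its assigned vertex, \(v_{J,k}\cdot P_{\iota}=h_H(v_{J,k})\), because aggregation preserves the normal cone. If an atom is split between the two endpoints of an exposed edge, it is replaced by two parallel vectors each still supported with the same value, so the same holds. Lemma~\ref{lem:calibration} then gives \(\sum h_H(v_{J,k})\le\len\eta\), provided every suffix state \(R_i\), \(i=1,\dots,N\), lies in the closed unit disk. Because the marking is anchored, its vector pieces appear in the order (4.3), up to subdivision, atom splitting, and the permitted prefix/suffix moves. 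Lemma~\ref{lem:ledger} therefore yields \(|R(t)|\le1\) for all \(t\), and in particular at the vertex times.

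Combining the two bounds gives \(C\le\len\eta\). The only point needing care, and the one I expect to be the main obstacle, is the matching between the ledger of Lemma~\ref{lem:ledger} and the suffix sums \(R_i\) of Lemma~\ref{lem:calibration}. Two issues arise. First, several vector pieces may share the same vertex; then \(R_i\) collects all mass with \(\iota\ge i\), and one must check that this is a state of the walk \eqref{eq:ledger-walk}, not an intermediate one. Second, pieces are attached to vertices rather than to times inside blocks. I would handle both by taking the temporal order of the marking as \(\iota\) (ties broken by the anchored cyclic order), so that each \(R_i\) is a suffix state \(R(t)\) at the time just before vertex \(i\). Then the remark after Lemma~\ref{lem:calibration} on splitting atoms, together with convexity of the disk for chord states, covers the remaining cases.
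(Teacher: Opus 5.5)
Your proposal is correct and takes essentially the paper's route. The lower bound integrates (3.1) against \(\nu\), using (4.2) and \eqref{eq:chunk-support}; the upper bound combines the cellwise balance \eqref{eq:chunk-balance}, normal-cone aggregation, the unit-disk bound of Lemma~\ref{lem:ledger}, and the Abel summation of Lemma~\ref{lem:calibration}. Your extra remark on ties at a shared vertex is a fair elaboration: each \(R_i\) is a suffix of any tie-broken anchored order, so it is one of the ledger states bounded by Lemma~\ref{lem:ledger}, which the paper leaves implicit.
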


\begin{proof}
Use the common source partition preceding
Lemma~\ref{lem:calibration}.  Equations
\eqref{eq:chunk-support}--\eqref{eq:chunk-balance} turn the support
integral into a finite list satisfying the support and zero-sum
hypotheses of that lemma.  Integrating the escape inequality on every
cell and adding bounds this finite total below by \(C\), as computed in
(4.2).  Lemma~\ref{lem:ledger} supplies its unit-disk suffix bound, and
Lemma~\ref{lem:calibration} bounds the same total above by the length of
the path.
\end{proof}

\begin{remark}[what is general, and what is golden]
The engine of this section is not tied to \(\beta=\pi/5\).  For any
triangle, its three outward normals have a positive linear dependence;
that dependence both eliminates translation from the escape criterion
and folds any positive source measure into balanced vector triples.
Normal-cone aggregation and Abel summation then apply verbatim.  The two
gnomon-specific achievements are the choice of source measure whose
ledger is sharp, and the rigidity argument forcing a shortest polygonal
counterexample into its safe temporal order.

The second of these is genuinely local, and quantifiably so.  The
rigidity argument is calibrated to \(C<129/100\), whereas in the
numerical continuation mentioned in Section~1 the best known escape
lengths along the same line--arc branch rise to about \(1.389\) at its
upper end.  Every numerical input of
Appendix~\ref{sec:rigidity} would therefore have to be recomputed at
another base angle, even though the calibration engine itself would be
unchanged.
\end{remark}

\section{The proof in three steps}\label{sec:proof-spine}

The finite candidate calculation is already summarized by
Proposition~\ref{prop:attainment}, and the entire analytic lower bound is
Proposition~\ref{prop:anchored-calibration}.  The remaining geometric
content is the following statement.
For points on an oriented path, write \(X\prec Y\) when \(X\) is
encountered before \(Y\).

\begin{proposition}[rigidity of a shortest counterexample]
\label{prop:rigidity}
Let \(N\ge1\), and let \(\eta\) be a standard polygonal escape path which
is length-minimal among polygonal escape paths with at most \(N\)
segments.  If \(\len\eta<C\), then \(\eta\) admits an anchored support
marking.
\end{proposition}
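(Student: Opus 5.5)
The plan is to extract from the minimality of \(\eta\) enough structure to read off the temporal order of the exposed contacts, and then check that this order is one of the anchored orders of (4.3). First I use the escape criterion, Lemma~\ref{lem:escape}. Since \(\eta\) escapes, (3.1) holds at every \(\phi\). Since \(\eta\) is length-minimal among paths with at most \(N\) segments and \(\len\eta<C\), shortening any terminal segment slightly must produce a fitting path. Hence the endpoints \(\eta(0),\eta(1)\) must be exposed in some window where (3.1) is sharp; otherwise moving an endpoint inward keeps all support values and contradicts minimality. This is the source of the "two anchors" in Figure~\ref{fig:proof-map}. The two endpoints are essential contacts of \(\conv\eta\), and I normalize by rotating the source measure as described after (4.2). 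The goal of the normalization is that the distinguished supporting gap (the edge of \(\conv\eta\) whose outward normal carries the atom \(2c\lambda u_{\pi/2}\)) is an upper support joining two contacts. Its endpoints then serve as the two halves of \(Z_0\), split as permitted by the anchored definition.

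Next I establish the \Lambdaarch{} structure. The key tool is the local two-gap surgery of the appendix. Suppose two consecutive gaps of \(\conv\eta\), meaning hull edges not traced by the path, were traversed in the wrong temporal direction. Then I replace the subpath between them by a reflected or rearranged copy with the same hull and strictly smaller length, or with the same length and fewer segments. Either outcome contradicts minimality. I would use this together with cyclic bitonicity. The claim is that, going around \(\partial\conv\eta\) starting from the distinguished gap, the times at which the hull vertices are visited first increase and then decrease. The increase should cover the right half (\(R\)-blocks and \(Z_1\)), the decrease the left half (\(Z_2\) and \(L\)-blocks), and the peak should sit opposite the distinguished gap. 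I prove bitonicity by contradiction. A local maximum of the visiting time away from the antipodal region produces two hull vertices visited in reverse order on the same side. The segment of \(\eta\) joining them, together with the hull chain between them, then admits a reflection across the chord. This preserves all support values in the sensitive windows of (3.1) and shortens the path, using \(C<129/100\) to rule out degenerate configurations with numerical margins.

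Finally I match the bitonic vertex order to the blocks. The chunked marking assigns each source cell in \([a,b]\), \([\pi-b,\pi-a]\) and their two shifts to the hull vertex exposed in that direction. The cyclic order of normals around \(\partial\conv\eta\) is fixed, so bitonicity converts it into the temporal order (4.3), up to the permitted prefix and suffix: pieces of the side sectors exposed at vertices visited before the first anchor or after the last. Subdividing blocks at fan boundaries is harmless, as the definition allows.

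I expect the main obstacle to be the bitonicity/surgery step. One has to show that every reordering violating (4.3) beyond a side-sector prefix or suffix is shortenable. This requires controlling where the three shifted normal families land relative to the distinguished gap. That is presumably where the quantitative input \(C<129/100\) and the sector mass bound \((b-a)(1+1/c)<47/125\) enter, via case analysis of which windows can be tight.
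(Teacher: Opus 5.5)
There is a genuine gap. Your proposal never identifies the distinguished gap or the anchors, and the temporal pattern you set out to prove is not the one the ledger needs.

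\textbf{Gap and anchors.} In the paper the anchors are not the endpoints of \(\eta\). For a nonconvex standard minimizer the endpoints are not hull-adjacent: if they were, bitonicity would make the path convex. So your endpoint-exposure observation produces no gap at all in that case. Only in the separately treated convex case does the unique gap join the endpoints.

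The paper selects the gap \(FT\) with the \(\Lambda\)-configuration theorem (Theorem~\ref{thm:lambda}). It supplies a hull vertex \(M\) on the parallel support line with \(F\prec M\prec T\). The two-gap surgery (Lemma~\ref{lem:local-gap}) then gives \(\len\eta\ge(1+\sqrt2)h\), hence \(h<s\). This is what lets the gnomon sit with its base on the gap line and the opposite arm still be crossed.

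The anchors \(Z_1,Z_2\) are the genuine contacts of the left and right oblique arms in the two translated placements. The quantitative heart of the proof is the outer-order exclusion (Lemma~\ref{lem:outer}). In each of the fourteen orders of \(F,B,M,D,T\) where an arm contact is not strictly inside \((F,T)\) in time, a rational support calculation forces length \(>129/100>C\). That, not bitonicity, is where \(C<129/100\) enters. The sector-mass bound \(47/125\) plays no role in rigidity; it is used only inside Lemma~\ref{lem:ledger}.

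\textbf{Bitonicity.} Your bitonicity step is both misattributed and mis-stated. Cyclic bitonicity (Lemma~\ref{lem:bitonic}) is a purely combinatorial deque-order property of every standard path, needing no minimality, surgery, or numerical margin. Your chord-reflection surgery that ``preserves all support values'' is also unjustified, since reflecting part of a path changes its hull.

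More seriously, the pattern you aim for is wrong: times rising from the gap to a peak opposite it and then falling, with the \(R\)-blocks on one half and the \(L\)-blocks on the other. The blocks in (4.3) interleave \(R\) and \(L\) pieces and are exactly the normal-fan order (A.19). What is needed is that temporal order \emph{agrees} with normal-fan order from \(F\) through \(M\) to \(T\). In particular the second half of \(Z_0\), at \(T\), must come after all middle blocks, whereas your unimodal picture makes \(T\) earlier than the contacts opposite it. It also contradicts \(F\prec M\prec T\).

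The paper instead cuts the cyclic sequence at \(FT\) into a decreasing--increasing--decreasing sweep. The normal order \(F<Z_1\le M\le Z_2<T\), together with \(t(F)<t(Z_1)\) and \(t(Z_2)<t(T)\), puts both anchors on the central increasing phase (Lemma~\ref{lem:sweep}). The only out-of-order contacts are then those of rank \(\le t(F)\) or \(\ge t(T)\). These form a fan prefix of \((R_1,L_0,R_2)\) and a fan suffix of \((L_1,R_0,L_2)\), which migrate across the two half-atoms of \(Z_0\). Lemma~\ref{lem:fan-ledger} identifies this with an anchored marking. Your prefix and suffix are instead cut at ``the first anchor'' and ``the last'', which is not the condition in the anchored definition.
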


\begin{proof}
A path with no segment is a point and does not escape.  The one-segment
case is also impossible.  Translate one endpoint to the origin,
write the other as \(\ell e\) with \(|e|=1\), and choose \(n_0=u_\phi\)
perpendicular to \(e\).  Then \(h_\eta(n_0)=0\), while
\[
             e\cdot n_1=\pm s,\qquad e\cdot n_2=\mp s.
\]
Consequently
\(h_\eta(n_1)+h_\eta(n_2)=s\ell\), and (3.1) gives
\(\ell\ge2c\).  The certified bound \(C<129/100<2c\) rules this out.

For a convex path, apply an isometry so that its unique gap is an upper
support with outward normal \(u_{\pi/2}\), and reverse the path parameter
if necessary so that the complementary hull boundary is traversed in
normal-fan order.  Split the \(Z_0\)-atom between the two endpoint
contacts.  Every other support contact then occurs in
normal-fan order, so Lemma~\ref{lem:fan-ledger} gives an anchored
marking.  Let \(\eta\) be nonconvex.
Lemma~\ref{lem:lambda-gap} supplies an omitted hull edge \(FT\), an
opposite support vertex \(M\), and the distance \(h\) between their
parallel support lines, with
\[
                  F\prec M\prec T,\qquad
                  \len\eta\ge(1+\sqrt2)h,\qquad h<s .
\]

Align the gnomon's base line with the lower support and translate it
horizontally, first until the left oblique arm supports \(\eta\), and
then, in a separate configuration, until the right arm supports it.
In each configuration the opposite arm is only required to be crossed.
Lemma~\ref{lem:outer} shows that the two genuine supporting contacts
\(Z_1,Z_2\) lie on the open middle subpath \(\eta_{F,T}\).
Those metric estimates use the gap as a lower support.  Reflect the
entire configuration in the gap line.  The isometric image has the same
length and temporal ranks and is still an escape minimizer by
Lemma~\ref{lem:reflection-invariance}, but now \(FT\) is an upper support:
\(Z_0=(0,2\lambda c)\) supports the gap and the two reflected oblique
normals are the directions of \(Z_1,Z_2\) in
\eqref{eq:ledger-blocks}.  Around the convex hull, the temporal ranks of a standard
path are cyclically bitonic by Lemma~\ref{lem:bitonic}.  Cutting this
cyclic sequence at \(FT\) gives a decreasing--increasing--decreasing
sweep.  In normal order \(F<Z_1\le M\le Z_2<T\), while
\(t(F)<t(Z_1)\) and \(t(Z_2)<t(T)\).
These are exactly the hypotheses of Lemma~\ref{lem:sweep}: both anchors
lie on the central increasing phase, the earlier contacts can contribute
only one temporal prefix, and the later contacts only one temporal
suffix.  Lemma~\ref{lem:fan-ledger} identifies the resulting
prefix--middle--suffix order with an anchored support marking.
\end{proof}

\begin{proof}[Proof of Theorem~\ref{thm:main}]
Proposition~\ref{prop:attainment} gives
\(\mathcal E(G)\le C\).  If a shorter rectifiable escape path existed,
Proposition~\ref{prop:standard} would give an \(N\) and a standard
polygonal path of length less than \(C\), shortest among escape paths
with at most \(N\) segments.  Proposition~\ref{prop:rigidity} would make
its support marking anchored, and
Proposition~\ref{prop:anchored-calibration} would give the contradictory
lower bound \(\len\eta\ge C\).
\end{proof}

\begin{remark}[the equality mechanism]
An exact optimum requires the construction and the lower bound to meet,
not merely to agree numerically.  Here they meet on
\(\supp\nu\): on the candidate, the escape inequality (3.1) is tight
precisely at the source directions---the sharp window of
Proposition~\ref{prop:attainment}, its reflection, and the atom
direction---while the ledger walk rides the unit circle exactly on its
two arc blocks \eqref{eq:unit-ledger-blocks}.  The calibration
equations \eqref{eq:calibration-system} are precisely this meeting
condition.
\end{remark}

\appendix

\section{Technical proof of minimizer rigidity}\label{sec:rigidity}

This appendix proves the geometric and finite ingredients used in
Proposition~\ref{prop:rigidity}.  The geometric exclusion through
Lemma~\ref{lem:sweep} uses no data from the candidate beyond
\(C<1.29\) and \(C/(1+\sqrt2)<s\).  The final fan-to-ledger interface
uses only the coarse angular order \(a<b<0\) and
\(a>-\beta/2\); none of the candidate coordinates \((x,y,d)\) enters.

\subsection{Polygonal minimizers and their boundary order}

For a compact convex set \(H\), define
\[
 \Phi(H)=\min_{\phi\in\R}
 \{2c\,h_H(\phi)+h_H(\phi+\pi-\beta)+h_H(\phi+\pi+\beta)\}.
                                                                    \tag{A.1}
\]
The function being minimized is continuous and \(2\pi\)-periodic, so
the minimum is attained.
Thus \(H\) is an escape hull exactly when \(\Phi(H)\ge2sc\).
The balance identity \eqref{eq:balance} makes \(\Phi\) translation
invariant; monotonicity under inclusion and positive homogeneity follow
term by term from the support function.  If \(d_H\) denotes Hausdorff
distance, then
\(|h_H(u)-h_K(u)|\le d_H(H,K)\) for every unit vector \(u\), and hence
\[
 |\Phi(H)-\Phi(K)|\le(2c+2)d_H(H,K),
 \qquad \Phi(rH)=r\Phi(H)\quad(r\ge0).
\]

\begin{proposition}[standard polygonal reduction]\label{prop:standard}
If a rectifiable escape path of length less than \(C\) exists, then
there are an integer \(N\) and a shortest escape path \(\eta\), among
polygonal paths with at most \(N\) segments, such that
\(\len\eta<C\).  Moreover, \(\eta\) may be chosen \emph{standard}: it is
simple and its vertices are precisely the vertices of its convex hull,
each visited once.
\end{proposition}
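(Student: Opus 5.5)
The plan has three stages: pass from a rectifiable path to a polygon with slack, use compactness to find a shortest polygon with a bounded number of segments, and then show such a minimizer can be taken standard.

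\emph{Stage one: polygonal approximation with slack.} Let \(\alpha\) be a rectifiable escape path with \(\len\alpha=L<C\).

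First I replace \(\alpha\) by a slightly enlarged copy. Translate so that \(\alpha(0)=0\), fix \(r>1\) with \(rL<C\), and put \(\alpha'=r\alpha\). By homogeneity, \(\Phi(\conv\alpha')=r\Phi(\conv\alpha)\ge 2rsc\), so \(\alpha'\) has strict margin \(2(r-1)sc\).

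Next I inscribe a polygon. Choose a polygon \(\pi\) with vertices on \(\alpha'\) whose hull is within Hausdorff distance \(\delta\) of \(\conv\alpha'\), where \((2c+2)\delta<2(r-1)sc\). This uses a fine partition of the parameter interval and uniform continuity. By the Lipschitz estimate for \(\Phi\), the polygon \(\pi\) escapes. Since it is inscribed, \(\len\pi\le rL<C\). Let \(N\) be its number of segments.

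\emph{Stage two: existence of a minimizer.} Consider escape polygons with at most \(N\) segments (allowing degenerate segments) and length at most \(\len\pi\). Up to translation their vertex tuples lie in a compact subset of \((\R^2)^{N+1}\).

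Length is continuous on this set, and \(\Phi(\conv\{\text{vertices}\})\) is continuous by the Hausdorff estimate. So the escape condition \(\Phi\ge 2sc\) is closed, and Lemma~\ref{lem:escape} makes it equivalent to escaping. A length minimizer \(\eta\) therefore exists, with \(\len\eta<C\).

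\emph{Stage three: making the minimizer standard.} Among all minimizers I pick one with the fewest nondegenerate segments, then apply surgery.

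\begin{itemize}
\item Degenerate segments are deleted.
\item Any vertex lying in the convex hull of the remaining vertices is removed by shortcutting, i.e.\ joining its neighbours directly. This does not increase length, by the triangle inequality. It cannot shrink the hull, since the hull of the remaining vertices already contains the removed vertex. So escape is preserved by monotonicity of \(\Phi\).
\item The one delicate case is a vertex interior to a hull edge, or equal to an earlier vertex. Shortcutting still keeps the hull because that point is already a convex combination of hull vertices. If length is strictly decreased, this contradicts minimality. If it is unchanged, the segment count drops, contradicting the secondary minimality.
\end{itemize}

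Hence every vertex is an extreme point of the hull, and each is visited once. Simplicity follows from the same idea: if two segments cross, reroute by the standard 2-opt exchange, reversing the middle subpath. This strictly shortens the path, keeps the vertex set and hence the hull, and stays within \(N\) segments. That contradicts minimality.

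\emph{Expected obstacle.} The hardest part is stage three. One must check that no surgery leaves the class of at most \(N\) segments and that each strictly decreases the pair (length, segment count), so that the process terminates. The 2-opt step also needs handling when the crossing involves collinear overlapping segments; I would treat that separately by shortcutting the overlap.
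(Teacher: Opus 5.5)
Your proposal is correct and follows essentially the paper's route. It uses homogeneity and the Lipschitz bound for \(\Phi\) to produce an escaping polygon of length \(<C\); you dilate by a fixed \(r>1\) before inscribing, whereas the paper inscribes first and rescales by \(r_n\to1\), which is an inessential reordering. It then takes a compactness minimizer, makes a secondary minimization of the vertex count to delete repeated and nonextreme vertices, and uncrosses by a strict 2-opt exchange. Two small points: the collinear-overlap case you flag is vacuous, because once every vertex is a distinct extreme point no three vertices are collinear, so every crossing is transversal in relative interiors. Also, a nonextreme endpoint is removed by deleting its terminal segment, not by joining neighbours.
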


\begin{proof}
Let \(\gamma\) be a rectifiable counterexample.  Reparameterize it on
\([0,1]\).  Starting from partitions whose chord sums approach
\(\len\gamma\), take successive unions with the preceding partitions
and with dyadic meshes.  The resulting partitions are refining, their
mesh tends to zero, and their chord sums still tend to
\(\len\gamma\): refinement cannot decrease a chord sum, while every
chord sum is at most \(\len\gamma\).  Their chordal interpolants
\(\gamma_n\) converge uniformly to \(\gamma\).  Indeed, if \(t\) lies
between consecutive partition points \(u,v\), then
\(\gamma_n(t)\) is a convex combination of \(\gamma(u),\gamma(v)\);
both are within
\(\omega_\gamma(\operatorname{mesh}\mathcal P_n)\) of \(\gamma(t)\).
Hence
\[
 \sup_{t\in[0,1]}|\gamma_n(t)-\gamma(t)|
 \le \omega_\gamma(\operatorname{mesh}\mathcal P_n)\longrightarrow0.
\]
Thus
\(\len\gamma_n\to\len\gamma\).
Uniform convergence gives Hausdorff convergence of the images.
Convexification is nonexpansive for Hausdorff distance---apply the same
convex coefficients to matched points---so the displayed estimate gives
\(\Phi(\conv\gamma_n)\to\Phi(\conv\gamma)\ge2sc\).
Put
\[
 r_n=\max\left\{1,\frac{2sc}{\Phi(\conv\gamma_n)}\right\}.
\]
For all sufficiently large \(n\), the denominator is positive and
\(r_n\to1\).  By homogeneity \(r_n\gamma_n\) escapes, while its length
tends to \(\len\gamma<C\).

Fix such a path \(P=(P_0,\ldots,P_N)\), translate \(P_0\) to the origin,
and put \(M=\len P<C\).  Padding shorter paths by repeated terminal
vertices, consider
\[
\mathcal K=\left\{(Q_0,\ldots,Q_N):
\begin{array}{l}
 Q_0=0,\quad \sum_{i=0}^{N-1}|Q_{i+1}-Q_i|\le M,\\
 \Phi\bigl(\conv\{Q_0,\ldots,Q_N\}\bigr)\ge2sc
\end{array}\right\}.
\]
Every \(Q_i\) lies in the closed ball of radius \(M\).  Both defining
maps are continuous.  More explicitly, if
\(\max_i|Q_i-Q_i'|\le\delta\), then every convex combination
\(\sum_i\alpha_iQ_i\) is within \(\delta\) of
\(\sum_i\alpha_iQ_i'\); hence the finite-hull map is one-Lipschitz in
the maximum vertex metric.  Thus \(\mathcal K\) is a nonempty compact
subset of that ball's \((N+1)\)-fold product.
Length therefore has a minimizer \(\eta\) on \(\mathcal K\).  It is also
shortest among all escaping paths with at most \(N\) segments: after
translation, every such path of length at most \(M\) lies in
\(\mathcal K\), while every other one is longer than \(M\ge\len\eta\).

Among the length minimizers choose one having the fewest vertices after
consecutive repetitions and terminal padding have been suppressed.
This second minimization removes all degeneracy at once.  If a vertex
occurs twice, delete either occurrence.  If a distinct vertex is not
extreme in the hull of the vertex set, then, because the set is finite,
it lies in the convex hull of the other vertices and may likewise be
deleted.  At an internal occurrence deletion replaces its two incident
segments by their chord; at an endpoint it deletes the single incident
segment.  In every case the hull is unchanged and the triangle
inequality does not increase length.  A strict decrease contradicts
length minimality, while equality contradicts the choice of the fewest
vertices.  If the initial vertex changes, translate the shortened path
back to the origin; this changes neither length nor \(\Phi\).
Consequently every vertex occurs once and is an extreme point of the
hull.

No three distinct hull vertices are collinear, since the middle one
would not be extreme.  Nor can an extreme vertex lie in the relative
interior of another segment.  Thus an intersection witnessing
nonsimplicity can be neither a repeated endpoint, an endpoint lying
inside a nonincident segment, nor a collinear overlap.  Any failure of
simplicity must therefore give two nonadjacent segments
\(DE\) and \(FG\), encountered in that temporal order, crossing in their
relative interiors.  Reverse the subpath from \(E\) to \(F\) and replace
\(DE,FG\) by \(DF,EG\).  The new path has the same endpoints, vertex set,
hull, and number of segments.  If \(X\) is the crossing point, neither
\(D,X,F\) nor \(E,X,G\) is collinear, so the two triangle inequalities
are strict:
\[
\begin{aligned}
 |DF|+|EG|
 &<(|DX|+|XF|)+(|EX|+|XG|)\\
 &=|DE|+|FG|.
\end{aligned}
\]
This contradicts length minimality.  Thus the chosen minimizer is simple,
visits every hull vertex exactly once, and has length at most \(M<C\);
it is standard.
\end{proof}

Write the hull vertices counterclockwise as
\(V_0,\ldots,V_{n-1}\), and let \(t(V_i)\) be their temporal ranks
on a standard path.

\begin{lemma}[endpoint peeling and cyclic bitonicity]\label{lem:bitonic}
After cyclically relabelling so that \(V_0\) is the initial endpoint,
write \(W_0,\ldots,W_{n-1}\) for the vertices in temporal order, so
\(W_0=V_0\).  For every \(j\ge1\), the remaining set
\[
                         \{W_j,\ldots,W_{n-1}\}
\]
is a consecutive sublist of the boundary list
\((V_1,\ldots,V_{n-1})\), and \(W_j\) is one of its two endpoints.
Consequently there is a \(k\) such that
\[
 t(V_0)<t(V_1)<\cdots<t(V_k)
 \quad\hbox{and}\quad
 t(V_k)>t(V_{k+1})>\cdots>t(V_{n-1})>t(V_0).                  \tag{A.2}
\]
\end{lemma}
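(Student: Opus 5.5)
The plan is an induction on \(j\) that peels the path from its start, using only the planar fact that a simple path through all hull vertices, visited once each (Proposition~\ref{prop:standard}), cannot cross itself. The statement holds for \(j=1\), since \(\{W_1,\ldots,W_{n-1}\}=\{V_1,\ldots,V_{n-1}\}\) is the whole list. For the claim that \(W_1\) is an endpoint, and for every later step, I will prove the following: if the unvisited set \(\{W_j,\ldots,W_{n-1}\}\) is a consecutive arc \(V_p,\ldots,V_q\) of the boundary (indices within \(1,\ldots,n-1\)), then the next vertex \(W_j\) is \(V_p\) or \(V_q\). Removing an endpoint leaves a consecutive arc, so this closes the induction.

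For the key step, suppose \(W_j=V_r\) with \(p<r<q\). The visited vertices \(W_0,\ldots,W_{j-1}\) are exactly the complementary arc \(V_{q+1},\ldots,V_{n-1},V_0,\ldots,V_{p-1}\), which is nonempty because it contains \(V_0\). The segment \(W_{j-1}V_r\) is a chord of the convex polygon from the visited arc to \(V_r\), and it separates the hull into two convex pieces. Since \(p<r<q\), one piece contains \(V_p\) and the other contains \(V_q\), both strictly off the chord by strict convexity (no three hull vertices are collinear). The remainder of the path, \(W_j,\ldots,W_{n-1}\), starts at \(V_r\) and must visit both \(V_p\) and \(V_q\) while staying in the hull. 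Every later vertex is a hull vertex, so this remainder passes from one open side of the line \(W_{j-1}V_r\) to the other. A continuous path within the hull that does so must meet the chord segment itself, because the hull minus the chord is disconnected into the two pieces.

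It remains to show that this meeting contradicts simplicity, and this is the step I expect to need the most care. The remainder can meet the chord only at \(V_r\) itself, only by crossing in a relative interior point, or at \(W_{j-1}\). Meeting at \(W_{j-1}\) is impossible, since it is already visited and the path is simple. Meeting at \(V_r\) again after leaving it would also violate simplicity. I would therefore argue that the first departure from \(V_r\) goes to one side, and that reaching the other side requires crossing the open chord, which is the segment \(W_{j-1}W_j\) of the path itself. That crossing is a self-intersection in relative interiors, which is excluded. Boundary-collinearity cases, such as a later segment lying along the chord, are excluded because no three hull vertices are collinear. The subtlety is the first segment out of \(V_r\) when the next vertex lies on the far side, but that segment meets the chord only at \(V_r\), where it is adjacent, so it is allowed; the contradiction then comes from needing to return.

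Finally, the bitonic form (A.2) follows from the peeling order. Each \(W_j\) is taken from the left end (smallest index) or the right end (largest index) of the current arc. The vertices taken from the left end appear in increasing index order with increasing ranks, and those taken from the right end appear in decreasing index order with increasing ranks. The last vertex \(W_{n-1}=V_k\) marks the meeting point of the two ends. Hence ranks increase along \(V_0,\ldots,V_k\) and decrease along \(V_k,\ldots,V_{n-1}\), and \(t(V_{n-1})>t(V_0)\) holds because \(V_0\) is the initial vertex.
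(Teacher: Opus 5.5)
Your proof is correct and is essentially the paper's argument: an endpoint-peeling induction whose only planar input is that a simple path cannot later meet one of its own earlier segments. The paper packages this as the fact that the first segment of a simple Hamiltonian path through a convex-position set is a hull edge, applied to the tail from \(W_j\) in the hull of the remaining vertices and detecting the crossing by alternation of chord endpoints; you apply the same idea to the chord \(W_{j-1}W_j\) in the full hull and detect the crossing by line separation, which is an equivalent reindexing.
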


\begin{proof}
We first isolate the only planar fact used.  The first segment \(XY\)
of a simple Hamiltonian path through a finite set in convex position is
an edge of that set's convex hull.  If not, each of the two open boundary
arcs from \(X\) to \(Y\) contains an unused vertex.  The tail beginning
at \(Y\) visits both arcs, so some first tail edge joins one open arc to
the other.  Its endpoints alternate around the boundary with \(X,Y\);
the two segments therefore cross in their relative interiors, contrary
to simplicity.  Only one geometric input is used here, namely that two
chords of a convex polygon with four distinct endpoints meet in their
relative interiors exactly when their endpoints alternate around the
boundary.  Granting that criterion, the deduction just given is a
statement about finite sequences, and in that form it is kernel-checked
as \code{first\_step\_is\_hull\_edge} in \code{ConvexOrder.lean}.

Apply this fact first to the full path.  It gives
\(W_1\in\{V_1,V_{n-1}\}\), so after \(V_0\) is deleted the current
vertex is an endpoint of the remaining boundary list.  Suppose
inductively that the remaining vertices form a consecutive list and
that its current vertex \(W_j\) is an endpoint.  Apply the same fact to
the simple Hamiltonian tail
\[
                         W_j,W_{j+1},\ldots,W_{n-1}.
\]
Within the convex hull of the remaining list, the two neighbours of
\(W_j\) are the next vertex inward and the opposite endpoint (the
closing chord).  Thus \(W_{j+1}\) is an endpoint after \(W_j\) is
deleted.  This proves the asserted end-deletion invariant by induction.

Let \(V_k=W_{n-1}\) be the last remaining vertex.  The vertices on the
\(V_1,\ldots,V_{k-1}\) side can only be deleted in that order, while
those on the \(V_{n-1},\ldots,V_{k+1}\) side can only be deleted in the
reverse boundary order.  Their two deletion streams may interleave, but
the inequalities within either stream are fixed.  They meet at
\(V_k\), whose rank is maximal, and this is exactly (A.2).
\end{proof}

Thus the temporal order is literally a deque order on the boundary
vertices.  In particular no separate support-contact theorem is needed:
as an oriented support line rolls around the polygon, its exposed vertex
has a cyclic bitonic temporal rank.  At a normal exposing an edge, its
two endpoints give the two one-sided branches.

\begin{remark}
The conclusion identifies the simple polygonal paths whose node set is the
vertex set of a strictly convex \(n\)-gon with the deque orders, and the
generation rule---choose an initial vertex, then one of two available
vertices at each later step---is exactly the one used to count them in
\cite[Lemma~2]{alexanderwetzelwichiramala2019}, where their number is
found to be \(n2^{n-2}\).  Lemma~\ref{lem:bitonic} is therefore a known
fact, recorded in the same source as
Theorem~\ref{thm:lambda}; the short proof is included only to keep the
treatment self-contained and to expose the end-deletion invariant, which
is what Lemmas~\ref{lem:outer-caps} and~\ref{lem:local-gap} actually use.
\end{remark}

\subsection{A quantitative gap}

The boundary edges omitted by a standard path are its \emph{gaps}; the
subpath joining the endpoints of a gap is its \emph{arch}.  A standard
path will be called \emph{convex} if it follows one of the two boundary
chains between its endpoints, equivalently if it has exactly one gap.
We need the
following local form of the Adhikari--Pitman surgery
\cite[Lemma~7]{adhikaripitman1989}.  We include the argument because our
selected gap comes from the \(\Lambda\)-property rather than from a
minimum-width direction.

\begin{lemma}[adjacent-gap outer cap]\label{lem:outer-caps}
Let \(\alpha\) be a standard polygonal path, let \(H=\conv\alpha\), and
put a gap \(C_1C_2\) on the lower support \(y=0\), with \(C_1\prec C_2\).
If \(C_2\) is not terminal, there is another gap \(L_1L_2\) such that
\[
                         C_1\prec L_1\prec C_2\prec L_2.
\]
The link entering \(C_2\) is \(L_1C_2\).  If the two gap lines are not
parallel, reflect in a vertical line if necessary so that \(C_1\) lies
to the left of \(C_2\), and let their intersection be \(E\).  Exactly one
of the following alternatives holds:
\[
\begin{array}{ll}
\text{\rm(R)}&
 E\text{ lies on the ray of }C_1C_2\text{ beyond }C_2,
 \quad\text{and the other line has order }L_1,L_2,E;\\
\text{\rm(L)}&
 E\text{ lies on the opposite base ray beyond }C_1,
 \quad\text{and the other line has order }E,L_1,L_2.
\end{array}
\]

In alternative \({\rm(R)}\), let \(A\) be the initial endpoint and let
\(P\) be the perpendicular projection of \(L_1\) to \(C_1C_2\).  The path
\[
                         \alpha_{A,L_1}\ast\seg{L_1}{E}
\]
has no more segments than \(\alpha\), and its convex hull contains \(H\).
If \(P\) lies on the closed base ray from \(E\) away from \(C_2\), the
same is true with \(E\) replaced by \(P\).
\end{lemma}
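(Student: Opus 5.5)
Everything rests on the end-deletion invariant of Lemma~\ref{lem:bitonic}, applied at \(C_2\).

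\textbf{Existence of the second gap.} Let the hull vertices be labelled counterclockwise, so that the gap \(C_1C_2\) on \(y=0\) is a boundary edge not used by \(\alpha\). Consider the moment the path reaches \(C_2\). By the invariant, the vertices visited after \(C_2\) form a consecutive boundary list \(\mathcal R\), and \(C_2\) is an endpoint of the list formed by \(C_2\) together with \(\mathcal R\). Since \(C_2\) is not terminal, \(\mathcal R\neq\emptyset\). Because \(C_1\prec C_2\), the vertex \(C_1\) is not in \(\mathcal R\). Hence \(\mathcal R\) extends from \(C_2\) away from \(C_1\), i.e. along the boundary on the side opposite the gap, up to a last vertex \(L_2\).

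The boundary neighbour of \(L_2\) outside \(\mathcal R\cup\{C_2\}\), call it \(L_1\), was visited before \(C_2\). I also claim \(C_1\prec L_1\). If \(L_1=C_1\), the list would be all of the boundary and the edge \(C_1C_2\) would be closing, which contradicts our setup. Otherwise, apply the invariant at the step entering \(C_2\): the vertex preceding \(C_2\) was an endpoint of the then-remaining consecutive list. That list contained \(C_2\) and \(\mathcal R\), and its other endpoint must be adjacent to \(L_2\). That endpoint is \(L_1\).

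So the link entering \(C_2\) is \(L_1C_2\), and \(L_1L_2\) is an omitted boundary edge, since the path goes \(L_1\to C_2\to\cdots\). Finally, \(C_1\prec L_1\): when \(C_1\) was visited, \(L_1\) remained, because \(L_1\) is visited at the step immediately before \(C_2\). This gives \(C_1\prec L_1\prec C_2\prec L_2\).

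\textbf{The dichotomy.} Both gap lines are supporting lines of \(H\), with \(H\) between them in the appropriate sense. The points \(C_1,C_2,L_2,L_1\) appear in convex cyclic order, so the two lines meet at a point \(E\) outside \(H\) on one side. On the base line, \(E\) lies beyond \(C_2\) or beyond \(C_1\), never between them, because \(L_1,L_2\) lie strictly above \(y=0\) and the segment \(L_1L_2\) is a hull edge. Which end \(E\) lies beyond determines the order on the other line (\(L_1,L_2,E\) versus \(E,L_1,L_2\)) by convex position. I will do this case split by elementary orientation (sign of determinant) arguments.

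\textbf{The surgery in case (R).} The new path is \(\alpha_{A,L_1}\ast[L_1,E]\). It replaces the tail after \(L_1\), which has at least one segment (\(L_1C_2\)), by one segment, so it has no more segments than \(\alpha\).

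For hull containment, the tail's vertices are \(C_2\) and \(\mathcal R\). These lie in the triangle-like region bounded by the two gap lines and the hull chain from \(L_1\) back through \(C_1\). More precisely, every vertex after \(L_1\) lies in \(\conv\{C_1,L_1,E\}\) together with the earlier vertices. This holds because these vertices lie on the boundary arc from \(C_2\) to \(L_2\), which sits below the line \(L_1L_2\) and above the base \(y=0\). Since \(E\) is the intersection of these two lines beyond \(C_2\), and \(C_1,L_1\) already lie in the retained prefix, the region between the two supporting lines and the chord \(C_1L_1\) is contained in \(\conv\{C_1,L_1,E\}\).

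For \(P\) on the base ray from \(E\) away from \(C_2\), I will use the same argument with the triangle \(C_1L_1P\). I expect this to be the main point requiring care: checking that the arc \(C_2\ldots L_2\) lies in \(\conv\{C_1,L_1,P\}\cup\conv(\text{prefix})\). The route is that this arc lies to the left of the vertical line through \(L_1\)… I would instead argue directly from \(E\). Since \(P\) lies beyond \(E\) on the base line, the set \(\conv\{C_1,L_1,P\}\) contains \(\conv\{C_1,L_1,E\}\) whenever \(E\) lies on the segment \(C_1P\), which is exactly the stated hypothesis. So that subcase reduces to the \(E\) case.
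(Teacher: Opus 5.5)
Your proof is correct. The existence of the second gap uses the same endpoint-peeling invariant as the paper, read from the other side. You take \(L_2\) as the far boundary end of the set of vertices visited after \(C_2\), take \(L_1\) as its outer neighbour, and then show that \(L_1\) is the temporal predecessor of \(C_2\). The paper instead starts from the predecessor and takes \(L_2\) as the next vertex inward. The dichotomy argument and the passage from \(E\) to \(P\) via \(E\in\seg{C_1}{P}\) are the paper's own.

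The genuine difference is the hull-containment step in (R).
\begin{itemize}
\item The paper shows that the whole boundary chain \(\mathcal C\) from \(C_1\) to \(L_1\) avoiding \(C_2\) already lies in the prefix. It then uses the two supporting tangents from \(E\) to write \(\partial\conv(H\cup\{E\})=\seg{E}{C_1}\cup\mathcal C\cup\seg{L_1}{E}\).
\item You observe that the discarded tail is exactly the boundary arc \(C_2,\dots,L_2\), and that this arc lies in the triangle \(\conv\{C_1,L_1,E\}\). That triangle is the intersection of \(\{y\ge0\}\), the inner side of the line \(L_1L_2\), and the side of the line \(C_1L_1\) containing \(E\). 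So only \(C_1\) and \(L_1\) need to lie in the prefix.
\end{itemize}
These are dual bookkeepings of the same partition of the vertex set. Yours is more elementary, since it needs no description of \(\conv(H\cup\{E\})\). As a by-product, the half-plane description forces \(L_2\in\seg{L_1}{E}\), which is exactly the order \(L_1,L_2,E\) in (R) that you otherwise only assert ``by convex position''.

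You should add one sentence explaining why the arc and \(E\) lie on the same side of the line \(C_1L_1\). The reason is that \(E\) lies on the open ray from \(C_1\) through \(C_2\), and \(C_2\) is off that line because no three hull vertices of a standard path are collinear.

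Two small loose ends remain in the first part.
\begin{itemize}
\item \(L_1=C_1\) is impossible, but the reason is not that an edge is ``closing''. It is that \(C_1\) would then be the only vertex visited before \(C_2\), so the first link would be the gap \(C_1C_2\).
\item When you identify the predecessor of \(C_2\), you must also rule out that it is \(C_1\), the outer neighbour of the other end \(C_2\) of the list. Again this holds because \(C_1C_2\) is not traversed.
\end{itemize}
With these in place, \(C_1\prec L_1\) follows cleanly from \(t(C_1)<t(C_2)=t(L_1)+1\) together with \(C_1\ne L_1\). Your current wording of that step reads as circular.
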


\begin{proof}
Use the end-deletion invariant of Lemma~\ref{lem:bitonic}.  Immediately
after \(C_1\) is selected, its boundary neighbour \(C_2\) is one endpoint
of the remaining boundary list.  Since \(C_2\) is not selected until
later, it remains that endpoint.  Let \(L_1\) be its temporal
predecessor.  Just before \(L_1\) is selected, the two endpoints of the
remaining list are therefore \(L_1\) and \(C_2\).  If \(L_2\) is the
next vertex inward from \(L_1\), deleting \(L_1\) leaves endpoints
\(L_2,C_2\), and the path next takes \(C_2\).  Hence \(L_1C_2\) is the
traversed cross-link and the boundary edge \(L_1L_2\) is omitted.
Because \(C_2\) is not terminal, \(L_2\ne C_2\) and is selected later.
This proves
\[
                         C_1\prec L_1\prec C_2\prec L_2.
\]
The corresponding boundary order, starting at \(C_1\) along the
unvisited side, is
\[
                         C_1,\ C_2,\ldots,L_2,\ L_1.
\]
The two nonparallel supporting lines bound two inward half-planes whose
intersection contains \(H\).  Their boundary rays from \(E\) enclose
that intersection, so the two exposed edges lie on the corresponding
rays beyond \(E\).  With \(C_1\) to the left of \(C_2\), the only two
possible orientations are
\[
                         C_1,C_2,E\quad\hbox{and}\quad L_1,L_2,E,
\]
or they have orders
\[
                         E,C_1,C_2\quad\hbox{and}\quad E,L_1,L_2.
\]
These are alternatives \({\rm(R)}\) and \({\rm(L)}\), respectively.

Assume \({\rm(R)}\).
Let \(\mathcal C\) be the boundary chain from \(C_1\) to \(L_1\) that
does not contain \(C_2\).  After selecting \(C_1\), endpoint peeling
proceeds along this chain until \(L_1\), when the link \(L_1C_2\)
switches to the other end of the remaining interval.  Thus every vertex
of \(\mathcal C\) has already occurred in the prefix through \(L_1\).
Moreover, \(EC_1C_2\) and \(EL_1L_2\) are the two supporting tangents
from \(E\) to \(H\).  Consequently the boundary of
\(\conv(H\cup\{E\})\) is
\[
              \seg{E}{C_1}\ \cup\ \mathcal C\ \cup\ \seg{L_1}{E}.
\]
Both \(C_1\) and \(L_1\), and every vertex of \(\mathcal C\), occur in the
prefix \(\alpha_{A,L_1}\).  Hence all three displayed boundary pieces
lie in \(\conv(\alpha_{A,L_1}\cup\{E\})\).  Taking their convex hull gives
\[
 H\subseteq\conv\bigl(\alpha_{A,L_1}\cup\{E\}\bigr).
\]
If \(P\) lies on the base ray beyond \(E\), then
\(E\in\seg{C_1}{P}\), so replacing \(E\) by \(P\) preserves this inclusion.
Each replacement discards the suffix after \(L_1\) and adds one segment;
because \(C_2\) is not terminal, its segment count is strictly smaller.
\end{proof}

\begin{lemma}[two-gap estimate]\label{lem:local-gap}
Let \(\alpha\) be a standard polygonal path which is length-minimal in
a class with the following property: replacing \(\alpha\) by a
polygonal path with no more segments and with convex hull containing
\(\conv\alpha\) preserves admissibility.  Suppose a gap \(C_1C_2\) lies
on one boundary of a supporting strip of width \(h\), the opposite
boundary meets the arch at \(M\), and
\[
                            C_1\prec M\prec C_2.
\]
If at least one of \(C_1,C_2\) is not an endpoint of \(\alpha\), then
\[
                         \len\alpha\ge(1+\sqrt2)h.              \tag{A.3}\label{eq:local-gap}
\]
\end{lemma}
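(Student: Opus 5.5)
The plan is to reduce to the configuration of Lemma~\ref{lem:outer-caps} and then combine one minimality inequality, obtained from the surgery in that lemma, with a crude vertical-travel estimate. Reversing the parameter if necessary (this preserves length, hull and admissibility), we may assume that \(C_2\) is not terminal. Place the strip as \(0\le y\le h\) with \(C_1C_2\) on \(y=0\) and \(M\) on \(y=h\), and reflect in a vertical line so that \(C_1\) lies to the left of \(C_2\). Lemma~\ref{lem:outer-caps} then supplies a second gap \(L_1L_2\) with
\[
 C_1\prec L_1\prec C_2\prec L_2,
\]
where the link entering \(C_2\) is \(L_1C_2\). Write \(y_1\) for the height of \(L_1\) and \(y_2\) for the height of \(L_2\).

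\emph{Vertical budget.} The path climbs from \(C_1\) to height \(h\) at \(M\) and returns to \(C_2\). Hence
\[
 \len\alpha_{C_1,C_2}\ge 2h,
\]
and it continues beyond \(C_2\) to \(L_2\). Whatever happens, I will use that the portion before \(L_1\) already contains a climb to some height and a descent.

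\emph{Parallel case.} If the gap lines are parallel, then \(L_1L_2\) lies on the upper support line \(y=h\). The path therefore runs \(C_1\to M\to C_2\to L_2\) with vertical travel at least \(3h\ge(1+\sqrt2)h\), and we are done.

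\emph{Case (L).} I expect this to be handled by the mirror version of the same argument, applied from the other end of the configuration. If this fails, the direct ray geometry of (L) should still give the \(3h\)-type vertical bound, since \(L_2\) then lies far from the base.

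\emph{Case (R), the heart of the proof.} Here Lemma~\ref{lem:outer-caps} says that \(\alpha_{A,L_1}\ast\seg{L_1}{X}\) is admissible, where \(X=P\) if \(P\) lies on the ray from \(E\) away from \(C_2\), and \(X=E\) otherwise. Length-minimality of \(\alpha\) then gives the key surgery inequality
\[
 \len\alpha_{L_1,\mathrm{end}}\ge |L_1X|.
\]
\begin{itemize}
\item If \(X=P\), then \(|L_1X|=y_1\).
\item If \(X=E\), then \(|L_1E|=y_1/\sin\theta\), where \(\theta\) is the angle at \(E\) between the two gap lines.
\end{itemize}
On the other side, the prefix up to \(L_1\) has vertical travel at least \(h+(h-y_1)\) when \(M\prec L_1\), and at least \(y_1\) followed by more when \(L_1\prec M\), in which case the subpath from \(L_1\) through \(M\) to \(C_2\) contributes at least \(2h-y_1\). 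Combining the pieces yields a bound of the form
\[
 \len\alpha\ge \min_{\theta,\,y_1}\bigl(\text{vertical travel}+\text{horizontal excess forced by }|L_1X|\bigr).
\]

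The main obstacle is the \(E\)-subcase, where \(P\) falls between \(E\) and \(C_2\). In that subcase the segment \(L_1C_2\) together with the return from \(C_2\) must cover both the height \(y_1\) and a horizontal offset that is tied to the slope of \(L_1L_2\) and to the position of \(C_2\) relative to \(E\). I would first try to parametrize everything by \(\theta\) and \(y_1\), writing the suffix length as at least
\[
 |L_1C_2|+\bigl(\text{distance from }C_2\text{ back to the line }L_1L_2\bigr)
\]
and pairing this against the prefix climb. The worst case should occur at \(\theta=45^\circ\), which is where the constant \(1+\sqrt2=1+1/\sin(\pi/4)\) appears. As a sanity check on the whole argument, equality in that extremal configuration should match Adhikari--Pitman's Lemma~7.
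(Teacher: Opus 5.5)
You have the right reduction, vertical budget and parallel case. However, Case (R) rests on a reversed inequality, and Case (L) is not handled.

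\textbf{Case (R).} Length-minimality says that \(\alpha\) is no longer than the admissible competitor \(\alpha_{A,L_1}\ast\seg{L_1}{X}\). That is, \(\len\alpha_{L_1,\mathrm{end}}\le|L_1X|\), not \(\ge\). So the outer cap never yields a lower bound on \(\len\alpha\). It is an upper bound on the suffix, and its role is to constrain the geometry. Combined with trivial lower bounds on the suffix, it does two things.
\begin{itemize}
\item If \(X=P\), then \(|L_1C_2|\le\len\alpha_{L_1,\mathrm{end}}\le|L_1P|<|L_1C_2|\). The last inequality is strict because \(P_x\ge E_x>(C_2)_x\) in (R). So this subcase is impossible, rather than a source of the bound \(y_1\).
\item If \(X=E\), write \(L_i=t_i(-\cos\theta,\sin\theta)\) with \(E\) as origin. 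Then \(|L_1C_2|+|C_2L_2|\le|L_1E|=t_1\). Reflecting \(L_2\) in the base to \(L_2'\) gives \(|L_1L_2'|\le t_1\), while \(|L_1L_2'|^2=t_1^2+t_2^2-2t_1t_2\cos2\theta\). Hence \(t_2\le2t_1\cos2\theta\), and so \(\theta<\pi/4\).
\end{itemize}

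The length bound then needs a step that is absent from your plan. Since \(L_1L_2\) supports the hull, \(M\) lies in the wedge at \(E\). So \(\angle C_2EM\le\theta<\pi/4\), and the horizontal distance \(d_0\) from \(M\) to \(E\) exceeds \(h\). Reflect \(M\) in the base to \(M'\); then \(M\) and \(L_2'\) lie on opposite sides of the line \(EM'\). Hence \(\len\alpha_{M,C_2}+\len\alpha_{C_2,L_2}\ge|MC_2|+|C_2L_2'|\ge\operatorname{dist}(M,EM')=2hd_0/\sqrt{d_0^2+h^2}\ge\sqrt2\,h\). The disjoint climb from \(C_1\) to \(M\) adds \(h\). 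Your \(45^\circ\) guess is correct, but the constant arises as \(2h\cos(\pi/4)\) from this reflection, after the cap has forced \(\theta<\pi/4\). It does not come from minimizing an expression in \(y_1/\sin\theta\).

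\textbf{Case (L).} This case is left open, and neither of your suggestions works as stated. Once \(C_2\) is fixed as non-terminal and \(C_1\) lies to the left of \(C_2\), no symmetry remains that exchanges (L) with (R). Nothing places \(L_2\) at height \(h\) either. The missing idea is that (L) cannot occur.
\begin{itemize}
\item By endpoint peeling, every vertex selected between \(C_1\) and \(L_1\) is taken from the end of the remaining boundary interval opposite \(C_2\). So \(\alpha_{C_1,L_1}\) follows the boundary chain from \(C_1\) to \(L_1\) that avoids \(C_2\).
\item In (L) the boundary edge from \(L_2\) to \(L_1\) points down and to the left. By monotone turning of the counterclockwise boundary, every edge of that chain, traversed from \(L_1\) to \(C_1\), has nonpositive vertical component.
\item Hence the whole arch, including the link \(L_1C_2\), has height at most \((L_1)_y<(L_2)_y\le h\). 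This contradicts the hypothesis that the opposite support line meets the arch at \(M\).
\end{itemize}
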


\begin{proof}
The assertion is immediate for \(h=0\).  Time reversal and Euclidean
isometries are bijections on polygonal paths preserving length, segment
count, and convex-hull inclusion.  Transporting the admissible class by
either bijection therefore preserves both its replacement property and
the minimality of \(\alpha\).  Apply these bijections if necessary, and
relabel the gap endpoints in temporal order, so that \(C_2\) is not
terminal, \(C_1C_2\) lies on \(y=0\), and
\(\alpha\subset\{0\le y\le h\}\).  Denote the initial endpoint of
\(\alpha\) by \(A\), its terminal endpoint by \(A'\), write
\(\alpha_{X,Y}\) for a temporal subpath, and use \(\ast\) for
concatenation.  Apply Lemma~\ref{lem:outer-caps} and use its notation.

If \(L_1L_2\parallel C_1C_2\), it lies on the opposite support \(y=h\).
The three disjoint subpaths from \(C_1\) to \(L_1\), from \(L_1\) to
\(C_2\), and from \(C_2\) to \(L_2\) each cross the strip.  Projection
gives \(\len\alpha\ge3h\), which is stronger than
\eqref{eq:local-gap}.

\smallskip\noindent\emph{Excluding the left-facing alternative.}
Assume the gap lines are not parallel.  Alternative \({\rm(L)}\) of
Lemma~\ref{lem:outer-caps} cannot occur.  In that alternative the tangent
rays have orders \(E,C_1,C_2\) and \(E,L_1,L_2\).  The second ray leaves
the base into the upper half-plane, so
\[
                              (L_2)_y>(L_1)_y.                  \tag{A.4}\label{eq:left-facing-height}
\]
After \(C_1\) is removed in the endpoint-peeling description,
\(C_2\) is one endpoint of the remaining boundary interval.  Since it
is not selected until immediately after \(L_1\), every vertex selected
between \(C_1\) and \(L_1\) comes from the other endpoint and lies on the
opposite boundary chain; consecutive choices from that endpoint make
the temporal subpath \(\alpha_{C_1,L_1}\) follow this chain.  In
alternative \({\rm(L)}\), the boundary edge from \(L_2\) to \(L_1\)
points down and to the left.  By monotone turning of the
counterclockwise convex boundary, its direction angle and all subsequent
direction angles up to that of the rightward horizontal edge \(C_1C_2\)
lie in \([\pi,2\pi]\).  Every such edge therefore has nonpositive
vertical component.  Height is consequently nonincreasing from \(L_1\)
along that opposite chain back to \(C_1\).  Hence the whole subpath from
\(C_1\) to \(L_1\), and also the link \(L_1C_2\), has height at most
\((L_1)_y\).  The entire arch \(\alpha_{C_1,C_2}\) consequently has
height at most \((L_1)_y\), whereas \(L_2\in H\) has larger height by
\eqref{eq:left-facing-height}.  This contradicts the hypothesis that the
opposite support \(y=h\) meets that arch at \(M\).  Thus alternative
\({\rm(R)}\) holds.

\smallskip\noindent\emph{(i) Fixing the wedge.}
If \(E\) were not beyond \(P\),
then \(P\) would lie on the closed base ray from \(E\) away from
\(C_2\).  Lemma~\ref{lem:outer-caps} would make the first outer cap
admissible, while
\[
                        |L_1P|<|L_1C_2|
                              \le\len\alpha_{L_1,C_2}.
\]
Here \(P\) is the foot of the perpendicular from \(L_1\), so
\(|L_1P|\le|L_1C_2|\) always, and the inequality is strict because
\(P\ne C_2\): in this case \(P\) lies on that ray, so
\(P_x\ge E_x\), whereas alternative \({\rm(R)}\) puts \(E\) beyond
\(C_2\), giving \(P_x\ge E_x>(C_2)_x\).
It would therefore be strictly shorter than \(\alpha\), a contradiction.
Thus \(E\) lies beyond \(P\), and \(L_1E\) descends to the right as in
Figure~\ref{fig:two-gap}.

\begin{figure}[h]
\centering
\begin{tikzpicture}[scale=.82,line cap=round,line join=round]
  \begin{scope}
    \draw[gray!55] (-.2,0)--(6.7,0);
    \draw[gray!55] (-.2,2)--(6.7,2);
    \draw[very thick]
      (-.1,.22)--(.55,0)--(1.15,1.25)--(2.05,2)--(3,1.58)
      --(3.6,0)--(4.9,.722)--(5.7,.32);
    \draw[dashed,blue!55!black] (.55,0)--(3.6,0);
    \draw[dashed,blue!55!black] (3,1.58)--(4.9,.722);
    \draw[densely dotted,orange!85!black] (4.9,.722)--(6.5,0);
    \draw[densely dotted,orange!85!black] (3,1.58)--(3,0);
    \fill (.55,0) circle (1.4pt) node[below] {\scriptsize \(C_1\)};
    \fill (2.05,2) circle (1.4pt) node[above] {\scriptsize \(M\)};
    \fill (3.6,0) circle (1.4pt) node[below] {\scriptsize \(C_2\)};
    \fill (3,1.58) circle (1.4pt) node[above right] {\scriptsize \(L_1\)};
    \fill (4.9,.722) circle (1.4pt) node[above right] {\scriptsize \(L_2\)};
    \fill (3,0) circle (1.4pt) node[below] {\scriptsize \(P\)};
    \fill (6.5,0) circle (1.4pt) node[below] {\scriptsize \(E\)};
    \node at (3.1,-.9) {\small (a) the adjacent gaps and outer cap};
  \end{scope}
  \begin{scope}[xshift=7.7cm]
    \draw[gray!55] (-.25,0)--(5.45,0);
    \draw[dashed,blue!55!black] (.5,1.48)--(5.2,0);
    \draw[very thick] (.25,0)--(1.55,2)--(3.05,0)--(4,.38);
    \draw[densely dotted,orange!85!black] (1.55,2)--(1.55,-2);
    \draw[densely dotted,orange!85!black] (1.55,-2)--(5.2,0);
    \draw[densely dotted,orange!85!black] (3.05,0)--(4,-.38);
    \draw[densely dotted,orange!85!black] (4,-.38)--(5.2,0);
    \fill (.25,0) circle (1.4pt) node[below] {\scriptsize \(C_1\)};
    \fill (1.55,2) circle (1.4pt) node[above] {\scriptsize \(M\)};
    \fill (1.55,-2) circle (1.4pt) node[below] {\scriptsize \(M'\)};
    \fill (3.05,0) circle (1.4pt) node[below] {\scriptsize \(C_2\)};
    \fill (4,.38) circle (1.4pt) node[above left] {\scriptsize \(L_2\)};
    \fill (4,-.38) circle (1.4pt) node[below left] {\scriptsize \(L_2'\)};
    \fill (5.2,0) circle (1.4pt) node[below] {\scriptsize \(E\)};
    \draw (4.66,0) arc[start angle=180,end angle=162.4,radius=.46];
    \node at (4.52,.19) {\scriptsize \(\theta\)};
    \node at (2.45,-2.85) {\small (b) the reflection estimate};
  \end{scope}
\end{tikzpicture}
\caption{The shortened two-gap argument.  Blue dashed segments are the
two gaps; orange dotted segments are the outer cap or reflections.}
\label{fig:two-gap}
\end{figure}

\smallskip\noindent\emph{(ii) The straight cap forces the angle.}
Minimality against the outer cap gives
\[
\begin{aligned}
 |L_1E|
 &\ge\len\alpha_{L_1,A'}\\
 &\ge\len\alpha_{L_1,C_2}+\len\alpha_{C_2,L_2}\\
 &\ge |L_1C_2|+|C_2L_2|.
\end{aligned}
\]
Put \(\theta=\angle C_2EL_2\), write
\[
 L_i=t_i(-\cos\theta,\sin\theta)\qquad(t_1>t_2>0),
\]
and reflect \(L_2\) in the base to \(L_2'\).  Since \(C_2\) lies on the
base, the preceding bound and the triangle inequality give
\[
 |L_1L_2'|
 \le |L_1C_2|+|C_2L_2'|
 =  |L_1C_2|+|C_2L_2|
 \le t_1.
\]
On the other hand,
\[
\begin{aligned}
 |L_1L_2'|^2
  &=(t_1-t_2)^2\cos^2\theta+(t_1+t_2)^2\sin^2\theta\\
  &=t_1^2+t_2^2-2t_1t_2\cos(2\theta).
\end{aligned}
\]
Thus \(0<t_2\le2t_1\cos(2\theta)\).  Since
\(0\le\theta<\pi/2\), this proves
\[
                              0\le\theta<\frac{\pi}{4}.          \tag{A.5}\label{eq:local-gap-angle}
\]

\smallskip\noindent\emph{(iii) Reflection.}
Put \(\alpha_0=\angle C_2EM\).  Since \(L_1L_2\) supports the hull,
\(M\) lies in the wedge between the two gap lines, and therefore
\(0\le\alpha_0\le\theta\).  Write \(d_0\) for the horizontal distance
from \(M\) to \(E\).  Since
\(h/d_0=\tan\alpha_0\le\tan\theta<1\), we have \(d_0>h\).
Reflect \(M,L_2\) in \(C_1C_2\), obtaining \(M',L_2'\).

Take \(E\) as origin and the base ray toward \(C_2\) as the negative
\(x\)-axis.  Then
\[
 M=(-d_0,h),\qquad M'=(-d_0,-h),\qquad
 L_2'=t_2(-\cos\theta,-\sin\theta).
\]
Writing \(\det\) for the oriented area form,
\[
\begin{aligned}
 \det(M',M)&=-2d_0h<0,\\
 \det(M',L_2')
   &=t_2(d_0\sin\theta-h\cos\theta)\ge0,
\end{aligned}
\]
where the last inequality is equivalent to
\(h/d_0\le\tan\theta\).  Thus \(M\) and \(L_2'\) lie on opposite sides of
the line \(EM'\), with equality allowed for \(L_2'\).  The continuous
broken line \(M\leadsto C_2\leadsto L_2'\) must meet \(EM'\), so its
length is at least
\[
 \operatorname{dist}(M,EM')
   =\frac{2hd_0}{\sqrt{d_0^2+h^2}}
   \ge\sqrt2\,h.                                                \tag{A.6}
\]
Explicitly, reflection and the triangle inequality give
\[
\len\alpha_{M,C_2}+\len\alpha_{C_2,L_2}
 \ge |MC_2|+|C_2L_2'|\ge\sqrt2\,h.
\]
The disjoint portion from \(C_1\) to \(M\) has length at least \(h\).
Adding proves
\eqref{eq:local-gap}: one full strip crossing, plus the
\(\sqrt2\,h\) diagonal cost forced by the second gap.
\end{proof}

We now choose the gap to which the estimate will be applied.  We use the
following result in exactly the form needed here.

\begin{theorem}[\(\Lambda\)-configuration theorem
{\cite[Theorem~5.1]{coultonmovshovich2006};
see also \cite[Theorem~1]{alexanderwetzelwichiramala2019}}]
\label{thm:lambda}
Let \(\gamma\) be a simple open polygonal arc of positive width.  There
are two parallel support lines, distinct points \(u,w\) of \(\gamma\) on
one line, and a point \(M\) of \(\gamma\) on the other, such that, after
naming the two outer points in temporal order,
\[
                              u\prec M\prec w .
\]
\end{theorem}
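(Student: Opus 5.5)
The plan is an extremal argument over pairs of parallel support lines.

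\emph{Compactness.} Since \(\gamma\) is a compact polygonal arc, its hull \(H=\conv\gamma\) is a convex polygon of positive width. For each direction \(u\) the two support lines with normals \(\pm u\) meet \(\gamma\) in nonempty compact sets \(S_+(u)\) and \(S_-(u)\). Let \(t^\pm_{\min}(u)\) and \(t^\pm_{\max}(u)\) be the earliest and latest parameter times on these contact sets. The conclusion fails for the line pair at \(u\) exactly when one contact set lies entirely before or after the other in time, with the degenerate cases handled separately. That is, either \(t^+_{\max}(u)\le t^-_{\min}(u)\) or \(t^-_{\max}(u)\le t^+_{\min}(u)\), up to the requirement that the two outer points be distinct.

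\emph{Reduction to hull vertices.} I would first reduce to hull vertices: contact sets are vertices or edges of \(H\), and every point of \(\gamma\) on a support line is either a vertex of \(\gamma\) or lies on a segment contained in that line. Next, suppose that for every \(u\) the two contact sets are temporally separated. Define \(\varepsilon(u)\in\{+1,-1\}\) according to which side is visited first. Then \(\varepsilon(-u)=-\varepsilon(u)\), since swapping \(u\) with \(-u\) swaps the two sides. If \(\varepsilon\) were locally constant on the circle, antipodal oddness together with connectedness of \(S^1\) would give a contradiction. So the heart of the proof is showing that \(\varepsilon\) cannot change value without producing the \(\Lambda\)-configuration.

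\emph{The transition step.} As \(u\) rotates, the contact sets change only at the finitely many edge normals of \(H\). At an edge normal, the contact set on one side becomes an edge \(VW\) while the other side is a single vertex \(M\) (or an edge, if \(H\) has parallel edges). Just before and just after that normal, the side's contact is \(V\) and then \(W\) respectively. If \(\varepsilon\) flips there, then \(V\) is visited before \(M\) and \(W\) after it, or the reverse. Naming the outer points in temporal order, we obtain \(u=V\prec M\prec w=W\) on the single support line through the edge \(VW\). This is exactly the required configuration. The case in which both sides are edges simultaneously is handled the same way by choosing the appropriate endpoints, noting that simplicity prevents ties in time between distinct vertices.

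\emph{Where a flip must occur.} Between edge normals the contact sets are constant, so \(\varepsilon\) is constant there. It follows that \(\varepsilon\) changes sign at some edge normal, and the previous step then yields the \(\Lambda\). The main obstacle I expect is making the bookkeeping of \(\varepsilon\) rigorous at edge normals where both the top and the bottom contacts change simultaneously, and at normals where an endpoint of \(\gamma\) is involved. In those cases the flip might apparently be attributable to neither side alone. My first approach would be to perturb \(u\) slightly past the normal on one side at a time, or equivalently to treat each edge change as a separate event ordered by an infinitesimal tie-break, so that each flip is attributable to exactly one edge.
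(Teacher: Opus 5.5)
Your argument is correct. It takes a genuinely different route from the paper, because the paper gives no proof of this statement. It imports the theorem from Coulton--Movshovich and Alexander--Wetzel--Wichiramala for arbitrary simple arcs that are not segments. Then, in the proof of Lemma~\ref{lem:lambda-gap}, it spends a paragraph upgrading the cited points to the endpoints \(F,T\) of an untraversed hull edge and a hull vertex \(M\).

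Your rotating-support parity argument is self-contained. Interleaving of the two contact time sets is exactly the \(\Lambda\); the separation sign \(\varepsilon\) is odd, so it must flip at some critical normal. It also delivers the upgraded form directly. Every configuration you extract consists of hull vertices whose outer pair spans a hull edge. That edge is automatically not contained in \(\gamma\), since the unique subarc joining its endpoints passes through a point off the line. What the citation buys the paper is brevity and a nonpolygonal generality it never uses.

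The obstacle you anticipate needs no perturbation or tie-break. For \(u\) near \(u_0\), the face of the polygon \(H\) exposed by \(u\) is contained in the face exposed by \(u_0\). So both contact time sets near \(u_0\) are subsets of the corresponding sets at \(u_0\). If the sets at \(u_0\) were temporally separated, \(\varepsilon\) would equal that separation sign on both sides of \(u_0\). Hence any flip across \(u_0\) forces the sets at \(u_0\) to interleave, and by your first paragraph that is a \(\Lambda\) on the support pair at \(u_0\).

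This covers parallel edges on both sides at once, and your ``appropriate endpoints'' remark is right. If the contacts move \(V\to W\) and \(M_1\to M_2\), and the flip has \(V\prec M_1\) and \(M_2\prec W\), then either \(V\prec M_1\prec W\) or \(M_2\prec W\prec M_1\). Endpoints of \(\gamma\) play no special role.

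The same semicontinuity, phrased as the closed graph of \(u\mapsto\{t:\gamma(t)\cdot u=h_H(u)\}\), makes \(\varepsilon\) locally constant for any simple arc of positive width. So your method in fact recovers the cited general statement, and your ``reduction to hull vertices'' paragraph is unnecessary.
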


The cited statements assume only that \(\gamma\) is a simple open arc which
is not a line segment.  For a simple arc that is the same as positive
width: an arc of zero width lies in a line, and a simple arc contained in a
line is a segment.  Restricting to polygonal arcs, as we do, only weakens
the hypothesis further.

\begin{lemma}[selected \(\Lambda\)-gap]\label{lem:lambda-gap}
Let \(N\ge1\), and let \(\alpha\) be a nonconvex standard polygonal
escape path which is length-minimal among polygonal escape paths with at
most \(N\) segments.  If \(L=\len\alpha<C\), then there are distinct
parallel support lines at distance \(h>0\), a gap \(FT\) on one line,
and a hull vertex \(M\) on the other such that
\[
 F\prec M\prec T,\qquad
 L\ge(1+\sqrt2)h,\qquad
 h<\frac{C}{1+\sqrt2}<s<\frac{147}{250}.
\]
\end{lemma}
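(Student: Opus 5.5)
Apply Theorem~\ref{thm:lambda} to \(\alpha\), convert the \(\Lambda\)-configuration into a gap configuration, and then invoke Lemma~\ref{lem:local-gap}.

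\emph{Setting up.} By Proposition~\ref{prop:standard}, \(\alpha\) is simple. It has positive width, since a segment is convex. Theorem~\ref{thm:lambda} supplies parallel support lines at distance \(h>0\), points \(u\prec M\prec w\) of \(\alpha\) on one line \(\ell\), and \(M\) on the other line \(\ell'\). Because \(\alpha\) is standard, its hull vertices are exactly its vertices. Hence \(\alpha\cap\ell\) is either one hull vertex or a hull edge \(V_iV_{i+1}\); since \(u\ne w\), it is the edge. If \(M\) lies in the relative interior of an edge of \(\alpha\) contained in \(\ell'\), I replace \(M\) by an endpoint of that edge. That endpoint is still strictly between \(u\) and \(w\) in time, because the edge lies in \(\ell'\) and \(u,w\notin\ell'\).

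\emph{Identifying the gap.} Next I show that the edge \(FT\) carrying \(u,w\) is a gap, with \(F\prec M\prec T\). If this edge were traversed by \(\alpha\), then \(\alpha\cap\ell\) would be that single segment, traversed in one contiguous time interval. The points \(u,w\) both lie on that segment, so \(M\), which comes between them in time, would also lie on the segment and hence on \(\ell\). This contradicts \(h>0\). So \(FT\) is omitted, i.e.\ a gap. Moreover, \(\alpha\cap\ell=\{F,T\}\) in this case, because a point of \(\alpha\) in the open edge would have to lie on a traversed segment inside \(\ell\), and no such segment exists. Hence \(\{u,w\}=\{F,T\}\), and after naming the endpoints temporally we get \(F\prec M\prec T\).

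\emph{Length bound.} To use Lemma~\ref{lem:local-gap}, I take the admissible class to be polygonal escape paths with at most \(N\) segments. Enlarging the hull preserves escape, since \(\Phi\) is monotone under inclusion and escape means \(\Phi\ge2sc\). So the replacement property holds, and \(\alpha\) is minimal in this class. The remaining hypothesis is that \(F\) and \(T\) are not both endpoints of \(\alpha\). This is the main obstacle. Suppose \(F\) is the initial endpoint and \(T\) the terminal one. Then every traversed segment lies in the hull, and the closed polygon formed by \(\alpha\) plus \(FT\) visits all hull vertices. I will argue via the end-deletion invariant of Lemma~\ref{lem:bitonic}: starting at \(F=V_0\) and ending at its neighbour \(T\), peeling must always take the endpoint away from \(T\). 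Therefore \(\alpha\) follows the boundary chain, so \(\alpha\) is convex with its unique gap \(FT\), contradicting nonconvexity. (If the chain went the other way, \(T\) would be removed before the end.) Lemma~\ref{lem:local-gap} then gives \(L\ge(1+\sqrt2)h\).

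\emph{Numerical bounds.} Finally, \(h\le L/(1+\sqrt2)<C/(1+\sqrt2)\). The certified bound \(C<129/100\), together with \(1+\sqrt2>2.414\), gives \(C/(1+\sqrt2)<0.5344\). Since \(s^2=(5-\sqrt5)/8\), we have \(s=0.5877\ldots\), so \(0.5344<s\). For \(s<147/250=0.588\), compare squares: \(0.588^2>0.34549\ge s^2\), which follows from \(\sqrt5>2.236\). All of these are routine rational comparisons.
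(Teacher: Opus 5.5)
Your proposal is correct and takes essentially the same route as the paper. You apply Theorem~\ref{thm:lambda}, show that a traversed segment on \(\ell\) would force \(M\) onto \(\ell\), so \(\{u,w\}=\{F,T\}\) is a gap, move \(M\) to a segment endpoint, exclude \(F,T\) both being path endpoints via the end-deletion invariant of Lemma~\ref{lem:bitonic} (the paper's ``\(k=1\) or \(k=n-1\)''), and invoke Lemma~\ref{lem:local-gap} for the class \(\mathcal A_N\). One trivial arithmetic slip needs fixing: \(s^2=(5-\sqrt5)/8\approx0.3454915\), so the claim \(0.34549\ge s^2\) is false; the bound \(\sqrt5>2.236\) actually gives \(s^2<(5-2.236)/8=0.3455<0.345744=0.588^2\), which is what you need.
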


\begin{proof}
Standardness makes \(\alpha\) simple, open and polygonal.  It has
positive width: otherwise its hull would be a segment, whose only
extreme points are its endpoints, so a standard path would itself be
one segment and hence convex.  Apply
Theorem~\ref{thm:lambda}, and denote its two support lines and three
points by \(\ell,\ell'\) and \(u,M,w\).

We first identify \(u,w\) with the endpoints of a hull edge.  Let
\(\ell\) be the line containing \(u\) and \(w\).  If some segment
of \(\alpha\) were contained in \(\ell\), its endpoints would be hull
vertices in the exposed face \(\ell\cap\conv\alpha\).  A polygonal face
has only its two endpoint vertices here, because the standardization
removed collinear nonextreme vertices.  The segment would therefore be
the whole exposed edge, and it would be the only portion of \(\alpha\)
on \(\ell\).  The two points \(u,w\) would lie on this single temporal
segment, forcing every point of the intervening subarc---in particular
\(M\)---to lie on \(\ell\), a contradiction.

Thus no segment of \(\alpha\) lies in \(\ell\), so
\(\alpha\cap\ell\) consists only of hull vertices.  The exposed face
cannot be a single vertex because \(u\ne w\); it is therefore an edge
with endpoints \(F,T\), and \(\{u,w\}=\{F,T\}\).  Relabelling so that
\(F\prec T\),
\[
                              F\prec M\prec T.                  \tag{A.7}\label{eq:lambda-order}
\]
Since no segment of \(\alpha\) lies in \(\ell\), the edge \(FT\) is not
traversed: it is a gap, \(M\) lies on its arch, and the distance \(h\)
between the two support lines is exactly that arch's altitude.
We may take \(M\) to be a hull vertex: if it lies inside a path segment,
linearity puts both endpoints on the same support line.  Since that line
is distinct from the one through \(F,T\), the segment occurs in the open
subarc \(\alpha_{F,T}\); replacing \(M\) by either endpoint preserves
\eqref{eq:lambda-order} and \(h\).

Next, at least one of \(F,T\) is not an endpoint of \(\alpha\).  Indeed,
if both were, then the two endpoints of \(\alpha\) would be adjacent on
the hull; in the notation of Lemma~\ref{lem:bitonic} this forces
\(k=1\) or \(k=n-1\), so \(\alpha\) traverses the hull vertices
monotonically in one sense or the other and is convex, contrary to
assumption.  This is exactly the hypothesis under which the surgery in
Lemma~\ref{lem:local-gap} operates, at whichever endpoint of the gap is
not terminal.

Let \(\mathcal A_N\) be the class of polygonal paths \(\zeta\) with at
most \(N\) segments and
\(\Phi(\conv\zeta)\ge2sc\).  By the escape criterion this is exactly the
relevant admissible class, and \(\alpha\) is length-minimal in it.  If a
replacement has no more segments and its hull contains
\(\conv\alpha\), monotonicity of \(\Phi\) keeps that replacement in
\(\mathcal A_N\).  Thus every hypothesis of
Lemma~\ref{lem:local-gap} is explicit, and it gives
\[
                              L\ge(1+\sqrt2)h.                  \tag{A.8}\label{eq:lambda-bound}
\]

Since \(L<C\), \eqref{eq:lambda-bound} and the certified bounds give
\[
        h<\frac{C}{1+\sqrt2}<s<\frac{147}{250}.                \tag{A.9}\label{eq:height}
\]
\end{proof}

\subsection{A rational support estimate for all exceptional orders}

Put
\[
                    h_{\max}=\frac{147}{250},\qquad
                    \kappa=\frac{129}{100}.
\]

Put the lower support of the \Lambdaarch\ on \(y=0\), with the path in
\(y\ge0\), and orient it so that \(F\) lies to the left of \(T\).
Write \(k=\cot\beta=c/s\).  We first support the path by the left
oblique arm.  Choose the horizontal position of the left base vertex by
the exact condition
\[
                  \min_{P\in\alpha}\{x_P-k y_P\}=0,
\]
and let \(B\) be a minimizer, which exists by compactness.  In the
resulting coordinates the path lies weakly to the right of the left arm:
\[
                              x\ge k y,
\]
with equality at \(B\).  Because every point of the path has
\(0\le y\le h<s\), the right-arm clearance at \(B\) is positive:
\[
 r(B):=2c-k y_B-x_B=2c-2k y_B>0.
\]
The path must meet the right arm \(r=0\).  If some point had \(r<0\),
connectedness and \(r(B)>0\) would already give a meeting point.
Otherwise, failure to meet would mean \(r>0\) everywhere; compactness
would give \(\varepsilon:=\min_\alpha r>0\).  Choose
\(\Delta y>0\) and \(\Delta x>k\Delta y\) so small that
\(\Delta x+k\Delta y<\varepsilon\).  Translating the path by
\((\Delta x,\Delta y)\) makes
\[
\begin{aligned}
 y+\Delta y&>0,\\
 x+\Delta x-k(y+\Delta y)&>0,\\
 2c-k(y+\Delta y)-(x+\Delta x)&>0.
\end{aligned}
\]
Thus the translated path would lie in \(\operatorname{int}G\),
contrary to escape.  Choose any meeting point \(D\).  Notice the
deliberate asymmetry: \(B\) is a genuine support contact, whereas \(D\)
need only be a crossing of the opposite arm.  This is all the length
estimate below uses.

In coordinates with the left base vertex at the origin, the five
recorded points have the form
\[
\begin{aligned}
F&=(x_F,0),& T&=(x_T,0),\\
B&=(k y_B,y_B),&
D&=(2c-k y_D,y_D),\\
M&=(x_M,h).
\end{aligned}                                                    \tag{A.10}\label{eq:five-points}
\]
Every such shorter minimizer satisfies
\[
 x_F\le x_T,\qquad 0\le y_B,y_D\le h\le h_{\max}.
                                                              \tag{A.11}\label{eq:tetral-feasible}
\]
No bound on \(x_M\) is needed: it cancels from every estimate below, and
\(x_F,x_T\) enter only through their difference.

The same construction can be made with the right arm supporting instead.
Reflect in the vertical axis, reverse the path parameter, and relabel
\(F,T\); this preserves \(x_F\le x_T\) and
\(F\prec M\prec T\), and reduces that second placement to the one just
described.  Thus it is enough to prove the following estimate once, for
a pair \(B,D\) on the two arms of one placement, without assuming that
both are support contacts.

\medskip
\begin{lemma}[outer-order exclusion]\label{lem:outer}
If \(F\prec M\prec T\) and at least one of \(B,D\) does not lie in the
open temporal interval \((F,T)\), then every polygonal chain through the
five points in their temporal order has length greater than \(\kappa\).
When a label coincides with \(F\) or \(T\), order that label on the
adjacent outer side.
\end{lemma}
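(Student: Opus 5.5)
This is a finite lower-bound computation on the five points in \eqref{eq:five-points}, subject to \eqref{eq:tetral-feasible}. For a chain through points in a fixed temporal order, the length is at least the sum of consecutive distances. So the task is to list the admissible temporal orders of \(\{F,M,T,B,D\}\) and bound each resulting chord sum from below.

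\emph{Enumerating orders.} The constraint \(F\prec M\prec T\) is fixed. The hypothesis says \(B\) or \(D\) lies outside \((F,T)\), so each of \(B,D\) is placed either before \(F\), inside \((F,M)\), inside \((M,T)\), or after \(T\), with at least one of them outer. The left--right reflection already used to pass between the two arm placements, together with time reversal (which swaps \(F\) and \(T\) while keeping \(x_F\le x_T\) after reflection), cuts this list to a handful of representative orders. Coincident labels are ordered on the outer side, as the statement prescribes, so they add no new cases.

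\emph{Bounding each order.} For a given order, replace each chord \(|PQ|\) by a linear lower bound: a projection onto a well-chosen direction, or the distance to a line. This gives a bound that is linear in the coordinates. A useful observation: \(B\) lies on the left arm and \(D\) on the right arm, and their horizontal separation is \(2c-k(y_B+y_D)\ge 2c-2kh\). Any order that puts both \(B\) and \(D\) on the same side of the \(M\)-excursion must also cross the height \(h\) at least twice, which costs \(2h\). For example, take the order \(B,F,M,T,D\). Then \(|BF|+|FM|+|MT|+|TD|\) is at least the horizontal span from \(B\) to \(D\) plus roughly \(2h\) of vertical travel. Combining these per-segment bounds via \(\sqrt{p^2+q^2}\ge p\cos\theta+q\sin\theta\) with a fixed \(\theta\) for each segment, the unknown \(x_M\) cancels and \(x_F,x_T\) appear only through \(x_T-x_F\ge0\). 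What remains is a function of \(y_B,y_D,h\) on the box \(0\le y_B,y_D\le h\le h_{\max}\). That function is piecewise linear, or at worst convex, so its minimum sits at a vertex of the box and can be checked rationally against \(\kappa=129/100\), using \(2c>1.6\), \(k=c/s\approx1.376\) and \(h_{\max}=147/250\).

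\emph{Main obstacle.} The difficulty is the orders where one of \(B,D\) is inside \((F,T)\) and the other is outside, near the critical height \(h\approx h_{\max}\). There the margin over \(\kappa\) is smallest, and crude projections may fail. In those cases I would minimise the Euclidean chord sum exactly, with reflection tricks: reflect \(M\) across the base line, as in Lemma~\ref{lem:local-gap}, to straighten the legs \(F\)-\(M\)-\(T\) and \(T\)-\(D\), and reflect across the arm to handle the \(B\) leg. Each case then becomes a distance between two explicit points, monotone in \(h\). If that is still not tight enough, I would subdivide \(h\) into a few rational intervals and verify each subinterval with interval arithmetic, in the style of the Lean certificates.
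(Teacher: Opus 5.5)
Your method is the paper's: bound each chord by \(q_j\cdot(P_{j+1}-P_j)\) with fixed \(|q_j|\le1\), choose the \(q_j\) so that \(x_M\) cancels and \(x_F,x_T\) enter only through \(x_T-x_F\ge0\), then push \(y_B,y_D\) up to \(h\) and \(h\) up to \(h_{\max}\). But the lemma consists of nothing except the existence of such directions in each of the eight classes. You never exhibit one, and you never compare a single bound with \(\kappa\). That omission is the whole proof, and it is not routine, because the slack is tiny.

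For the order \(BFMDT\), take \(y_B=y_D=h\approx0.299\), \(x_F=c\), \(M=D\), and \(T\) directly below \(D\). This satisfies \eqref{eq:five-points}--\eqref{eq:tetral-feasible}, and its chord sum is about \(1.2938\). Read in the order \(B,F,D,M,T\), the same points serve \(BFDMT\). Any valid linear certificate must therefore take a value in \((1.29,1.2938]\) at this configuration. So its directions must essentially reproduce the chord directions \((\tfrac45,\mp\tfrac35)\) and \(-e_y\) of that configuration.

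That is exactly the paper's choice, \((A_-,A_+,C_0,-e_y)\) and \((A_-,A_+,0,-e_y)\) respectively. It gives \(\tfrac{8c}{5}+h+(\tfrac35-\tfrac{4k}{5})(y_B+y_D)\ge\tfrac{8c}{5}+\tfrac{11-8k}{5}h>1.2925\), and it is essentially exact at that point. The other six classes clear \(\tfrac{8c}{5}>1.2944\) with the simpler vectors of \eqref{eq:rational-supports}. For the same reason your coarse input \(2c>1.6\) is too weak: it gives only \(\tfrac{8c}{5}>1.28\), which is below \(\kappa\). The paper needs \(c>\tfrac{809}{1000}\) and \(\tfrac{11}{8}<k<\tfrac{1377}{1000}\).

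The vertex principle you invoke (``piecewise linear, or at worst convex, so its minimum sits at a vertex'') is false, and it fails exactly where your fallback would need it. A convex function on a polytope, even a piecewise-linear one, need not attain its minimum at a vertex; only linear or concave ones must. After minimizing out \(x_F,x_M,x_T\), the exact chord sum in the tight classes is convex in \((y_B,y_D,h)\). Its minimum lies in the relative interior of the edge \(y_B=y_D=h\), at \(h\approx0.30\), while every vertex gives more than \(1.58\).

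Likewise, the ``critical height \(h\approx h_{\max}\)'' is an artifact of linearization, not a feature of the true chord sum. The linear bound is worst at \(h_{\max}\) only because its \(h\)-coefficient \((11-8k)/5\) is slightly negative. So an exact reflection approach would need a genuine interior minimization. Committing to explicit near-optimal rational directions, as the paper does, instead reduces each class to coefficient signs and one rational comparison.
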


\begin{proof}
We shall use only
\[
 c>\frac{809}{1000},\qquad
 \frac{11}{8}<k<\frac{1377}{1000}<\frac32,\qquad
 0\le h\le\frac{147}{250}.                              \tag{A.12}\label{eq:coarse-outer}
\]
There are twenty orders of \(F,B,M,D,T\) compatible with
\(F\prec M\prec T\).  Six have both \(B,D\) between \(F,T\); the other
fourteen are exceptional.  Reflection in the vertical axis followed by
reversal of time exchanges \(F\leftrightarrow T\) and
\(B\leftrightarrow D\), while fixing \(M\).  Thus the fourteen raw
orders form eight classes.  Explicitly, the six two-element orbits and
the two fixed orders are
\[
\begin{gathered}
 BFMDT\leftrightarrow FBMTD,\qquad
 BFDMT\leftrightarrow FMBTD,\qquad
 FMDTB\leftrightarrow DFBMT,\\
 FMTBD\leftrightarrow BDFMT,\qquad
 FMTDB\leftrightarrow DBFMT,\qquad
 FDMTB\leftrightarrow DFMBT,\\
                         BFMTD,\qquad DFMTB .
\end{gathered}                                                   \tag{A.13a}\label{eq:outer-order-orbits}
\]
This also covers coincident labels: refine
any weak temporal order compatibly, using the outer-side convention in
the statement at \(F,T\).  Each equality then contributes a zero chord,
so the corresponding supported sum and all inequalities below are
unchanged.

We give one representative of each class.  Put
\[
\begin{gathered}
 e_x=(1,0),\quad e_y=(0,1),\quad
 A_\pm=\left(\frac45,\pm\frac35\right),\\
 U=\left(\frac25,\frac9{10}\right),\quad
 X=\left(\frac25,0\right),\quad
 C_0=\left(\frac45,-\frac25\right).
\end{gathered}
\]
\par\medskip\noindent
For the four successive chords, in temporal order, use the following
vectors:
\[
\renewcommand{\arraystretch}{1.16}
\begin{array}{c|c}
\text{order}&(q_0,q_1,q_2,q_3)\\ \hline
BFMDT&(A_-,A_+,C_0,-e_y)\\
BFDMT&(A_-,A_+,0,-e_y)\\
FMDTB&(U,X,-U,-A_-)\\
FDMTB&(U,-X,-U,-A_-)\\
FMTBD&(e_y,-e_y,e_y,e_x)\\
FMTDB&(e_y,-e_y,e_y,-e_x)\\
BFMTD&(A_-,A_+,A_-,A_+)\\
DFMTB&(-A_+,e_y,-e_y,-A_-)
\end{array}                                                     \tag{A.13}\label{eq:rational-supports}
\]
\par\medskip\noindent
Every vector has norm at most one.  The only nonunit checks are
\[
 |U|^2=\frac{97}{100},\qquad |X|^2=\frac4{25},\qquad
 |C_0|^2=\frac45.
\]
The directions are deliberately coarse: their only job is to clear the
barrier \(\kappa\) with an exactly computable rational margin, and the
simple components keep the expansion checkable by hand.
Therefore the elementary inequalities
\[
 |P_{j+1}-P_j|\ge q_j\cdot(P_{j+1}-P_j)
\]
give lower bounds for the chain.  Substitution of
\eqref{eq:five-points}, followed only by
\(x_F\le x_T\) and \(y_B,y_D\le h\), gives
\[
\renewcommand{\arraystretch}{1.18}
\begin{array}{c|c}
\text{orders}&\text{lower bound}\\ \hline
BFMDT,\ BFDMT&
 \displaystyle \frac{8c}{5}+\frac{11-8k}{5}\,h\\[2pt]
FMDTB,\ FDMTB,\ BFMTD&
 \displaystyle \frac45\bigl(2c+(3-2k)h\bigr)\\[2pt]
FMTBD,\ FMTDB&
 \displaystyle 2c+(3-2k)h\\[2pt]
DFMTB&
 \displaystyle \frac85\bigl(c+(2-k)h\bigr).
\end{array}                                                     \tag{A.14}\label{eq:rational-support-bounds}
\]
Every use of \(y_B,y_D\le h\) here replaces a variable having a
nonpositive coefficient; the signs follow from \(k>11/8\).
For example, the supported sum in either of the first two orders is
\[
 \frac{8c}{5}+h+
 \left(\frac35-\frac{4k}{5}\right)(y_B+y_D);
\]
the first line of \eqref{eq:rational-support-bounds} follows because its
last coefficient is negative.  The other three lines are the same direct
expansion; any omitted \(x_T-x_F\) term has a nonnegative coefficient.

By \eqref{eq:coarse-outer}, the last three lines of
\eqref{eq:rational-support-bounds} are greater than or equal to
\(8c/5>1.2944>\kappa\).  In the first line \(11-8k<0\), so the height cap
and the other bounds in \eqref{eq:coarse-outer} give
\[
\begin{aligned}
\frac{8c}{5}+\frac{11-8k}{5}h
&>
\frac85\frac{809}{1000}
+\frac{11-8(1377/1000)}5\frac{147}{250}\\
&=\frac{100978}{78125}
 =1.2925184>\kappa .
\end{aligned}
\]
This proves all eight representatives and hence all fourteen exceptional
orders.  Finally, splitting the competitor at the five contact parameters
and applying the triangle inequality shows that its length dominates the
corresponding four-chord chain.
\end{proof}

Because \(L<C<\kappa\), apply Lemma~\ref{lem:outer} first with the
left arm supporting.  Its genuine support contact \(B\) must lie on the
open middle subarc \(\alpha_{F,T}\); call it \(Z_1\).  Apply the reflected
construction with the right arm supporting.  Its genuine support
contact likewise lies on \(\alpha_{F,T}\); call it \(Z_2\).  The auxiliary
crossing point in either application is used only to invoke the
five-point estimate and is then discarded.
Choose \(Z_1,Z_2\) as endpoint vertices of their exposed faces.
Lemma~\ref{lem:outer} rules out \(F,T\) for either choice, so
\(t(M),t(Z_1),t(Z_2)\) below are genuine vertex ranks.

\subsection{The anchored support sweep}

Two orders coexist on a standard path: the normal-fan order of its hull
boundary and the temporal order of its traversal.  For a path that
follows the boundary they agree; for a path that jumps across its hull
they can disagree violently.  The remaining work shows that the two
anchors leave room for exactly the two migrations that
Lemma~\ref{lem:ledger} tolerates.

The estimates above used a \emph{metric frame}: the gap \(FT\) was the
lower support \(y=0\), because that makes the gnomon placement and the
height \(h\) transparent.  The calibration uses the reflected
\emph{ledger frame}.  Reflect the entire path in the line \(FT\), retain
the same labels and temporal parameters, and prove the lower bound for
this isometric image.  By Lemma~\ref{lem:reflection-invariance} it is
again an escape minimizer.  Now \(FT\) is an upper support, the hull
lies in \(y\le0\), and
\[
 Z_0=(0,2\lambda c)
\]
is genuinely supported at \(F,T\).  The old left- and right-arm normals
\((-s,c)\) and \((s,c)\) become
\[
 \frac1\lambda Z_1=(-s,-c),\qquad
 \frac1\lambda Z_2=(s,-c),
\]
exactly the two atomic directions in \eqref{eq:ledger-blocks}.  The
contacts \(Z_1,Z_2\) are the \emph{anchors}: their certified temporal
position inside \((F,T)\) is all the rigidity the sweep below uses.
Reflection changes neither length, temporal order, the three-phase
property, nor the conclusions of Lemma~\ref{lem:outer}.

Orient the normal fan so that the complementary hull boundary runs from
\(F\), through the opposite support \(M\), to \(T\).  The omitted upper
gap edge returns from \(T\) to \(F\), while \(t(F)<t(T)\).  Cutting the
cyclic bitonic sequence of Lemma~\ref{lem:bitonic} along that edge
produces three phases on the complementary chain.  Indeed, the cyclic sequence
has one increasing branch from its minimum to its maximum and one
decreasing branch back.  The directed edge \(T\to F\) decreases temporal
rank, so it lies on the latter branch; deleting it leaves a suffix of
the decreasing branch, the full increasing branch, and a prefix of the
decreasing branch:
\[
             F\ \searrow\ t_{\min}\ \nearrow\ t_{\max}\
             \searrow\ T.                                     \tag{A.15}\label{eq:three-phase}
\]
In this subsection \(X\le Y\) denotes weak order along that oriented
complementary boundary; temporal comparisons are always written with
\(t\).  The hull lies to the right of the supporting left arm and to the
left of the supporting right arm.  In the ledger frame their outward
unit normals, together with that of the opposite lower line, are
\[
 n_L=(-s,-c)=u_{3\pi/2-\beta},\qquad
 n_D=(0,-1)=u_{3\pi/2},\qquad
 n_R=(s,-c)=u_{3\pi/2+\beta}.
\]
Along the complementary boundary from \(F\) to \(T\), the outward normal
rotates counterclockwise; hence the exposed faces containing \(Z_1,M,Z_2\)
occur in that order.  The outer-order exclusion makes the first and last
contacts strictly interior.  Consequently
\[
                         F<Z_1\le M\le Z_2<T.                  \tag{A.16}\label{eq:normal-anchors}
\]
The middle inequalities are weak because several directions can expose
the same polygon vertex.
The outer-order exclusion gives
\[
                         t(F)<t(Z_1),\qquad t(Z_2)<t(T).        \tag{A.17}\label{eq:time-anchors}
\]

\medskip
\begin{lemma}[anchored sweep]\label{lem:sweep}
Under \eqref{eq:normal-anchors}--\eqref{eq:time-anchors}, both
\(Z_1,Z_2\) lie on the central increasing phase of
\eqref{eq:three-phase}.  Among contacts \(X\le Z_1\), those with
\(t(X)\le t(F)\) form one normal-fan prefix.  Among contacts
\(Z_2\le X\), those with \(t(T)\le t(X)\) form one normal-fan suffix.
After deleting these two sets, all remaining contacts occur in weak
normal-fan order.
\end{lemma}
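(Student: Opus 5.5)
The plan is to read everything off the three-phase description \eqref{eq:three-phase} of temporal rank along the complementary boundary chain from \(F\) to \(T\). Call its phases I (the decreasing branch from \(F\) to the vertex \(V_{\min}\) of minimal rank), II (the increasing branch from \(V_{\min}\) to \(V_{\max}\)) and III (the decreasing branch from \(V_{\max}\) to \(T\)). Each contact is a hull vertex, chosen as in the finite reduction, so it has a place in this chain and a temporal rank. Every claim then reduces to comparing ranks along three monotone runs, using the two anchor inequalities \eqref{eq:time-anchors} and the normal order \eqref{eq:normal-anchors}.

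\emph{The anchors lie in phase II.}
\begin{itemize}
\item Every vertex \(X\) of phase I with \(X\ne F\) has \(t(X)<t(F)\), because rank decreases along phase I. Since \(t(Z_1)>t(F)\) and \(Z_1\ne F\) by \eqref{eq:normal-anchors}, \(Z_1\) does not lie in the open part of phase I.
\item Every vertex of phase III other than \(T\) has rank greater than \(t(T)\). Since \(t(Z_2)<t(T)\), \(Z_2\) does not lie in phase III except possibly at its initial vertex \(V_{\max}\), which belongs to phase II as well.
\item If \(Z_1\) lay strictly inside phase III, then \(Z_2\ge Z_1\) would too, which is impossible. If \(Z_2\) lay strictly inside phase I, then \(Z_1\le Z_2\) would too, which is also impossible.
\end{itemize}
Hence both anchors lie on phase II, with the shared turning vertices \(V_{\min},V_{\max}\) assigned to it.

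\emph{The prefix and suffix.} Take contacts \(X\le Z_1\). Those in phase I all satisfy \(t(X)\le t(F)\). Those in phase II before \(Z_1\) have increasing rank, so the ones with \(t(X)\le t(F)\) form an initial segment of that stretch. Phase I followed by this initial segment of phase II is therefore a single normal-fan prefix. The case \(Z_2\le X\) is the mirror image. All of phase III after \(Z_2\) has rank at least \(t(T)\). Along phase II between \(Z_2\) and \(V_{\max}\), ranks increase, so the contacts with \(t(X)\ge t(T)\) form a terminal segment. Together these give one normal-fan suffix. Equivalently, one could apply the reflection-plus-time-reversal symmetry already used in Lemma~\ref{lem:outer} to reduce the suffix claim to the prefix claim.

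\emph{The remainder.} After both sets are deleted, every remaining contact lies in phase II strictly between the prefix and the suffix. This includes everything between \(Z_1\) and \(Z_2\), because phase II runs from \(V_{\min}\) to \(V_{\max}\) and contains both anchors. Rank is increasing along phase II, so temporal order coincides with weak normal-fan order there.

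The step I expect to be the main obstacle is the bookkeeping of degeneracies rather than the monotonicity itself. Several normal directions can expose the same vertex, which is why the order is only weak. A normal exposing an edge may put its contact at either endpoint. The turning vertices \(V_{\min},V_{\max}\) belong to two phases. I would fix the convention that a contact at an edge normal is assigned to the endpoint consistent with its branch, and treat \(F\) and \(T\) themselves as contacts of the prefix and suffix respectively, since \(t(F)\le t(F)\) and \(t(T)\ge t(T)\). I would then check that none of these choices can split the prefix or suffix into two non-contiguous pieces. This is what Lemma~\ref{lem:fan-ledger} needs in order to match the anchored block order (4.3).
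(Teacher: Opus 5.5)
Your proposal is correct and follows essentially the same argument as the paper. The paper likewise shows that $Z_1\le L$ or $H\le Z_2$ (your $V_{\min},V_{\max}$) would contradict $t(F)<t(Z_1)$ or $t(Z_2)<t(T)$, concludes $L<Z_1\le M\le Z_2<H$, and then reads off the prefix, the suffix and the remaining weak normal order from monotonicity of rank on the three phases; your hedges ``open part of phase I'' and ``except possibly at $V_{\max}$'' are merely more cautious than needed, since distinct ranks already force $Z_1\ne V_{\min}$ and $Z_2\ne V_{\max}$.
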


\begin{proof}
Write \(L,H\) for the two phase-change contacts in
\eqref{eq:three-phase}.  On \([F,L]\) the temporal rank \(t\) is weakly
decreasing.  Hence \(Z_1\le L\) would imply
\(t(Z_1)\le t(F)\), contrary to \eqref{eq:time-anchors}; therefore
\(L<Z_1\).  Similarly, \(H\le Z_2\) would imply
\(t(T)\le t(Z_2)\), so \(Z_2<H\).  Since
\[
                         L<Z_1\le M\le Z_2<H,
\]
both anchors and every contact between them lie on the central phase,
where \(t\) is weakly increasing.

It remains to identify the tails, rather than merely draw them.  Before
\(Z_1\), the predicate \(t(X)\le t(F)\) holds throughout the initial
decreasing phase.  On the central increasing phase it changes from true
to false at most once.  It cannot become true on the final decreasing
phase, because that phase ends at \(t(T)>t(F)\), so every one of its
ranks is at least \(t(T)\).  Hence the contacts with
\(t(X)\le t(F)\) form one normal-fan prefix, and every contact remaining
before \(Z_1\) is encountered in weak normal order.

Dually, \(t(T)\le t(X)\) changes from false to true at most once on the
central increasing phase and remains true throughout the final
decreasing phase, whose ranks decrease to \(t(T)\).  Those contacts form
one normal-fan suffix.  After deleting the prefix and suffix, all
remaining contacts lie on the increasing phase and therefore occur in
weak normal order.  This proves all three assertions.  The same
order-theoretic deduction, including the possible equalities at an
exposed edge, is formalized as
\code{ThreePhaseSweep.anchors\_force\_ledger\_shape}.
\end{proof}

One anchor would not suffice.  On a decreasing phase, normal order and
temporal order run oppositely, so vector pieces there enter the ledger
in reversed fan order; it is the pair of anchors, one on each side of
the bottom normal, that confines every out-of-order contact to the two
light tails.

Coincident vertex contacts introduce no additional case.  A direction
exposing an edge may be assigned either endpoint; if a chunk boundary
falls there, split the chunk.  If an atom has coincident contacts, either
temporal assignment gives the same support value.  In particular, split
the upper-gap \(Z_0\)-atom equally between \(F\) and \(T\), which replaces
one vector of the family in Lemma~\ref{lem:calibration} by two parallel
vectors.  Since the competitor is a polygon, all of this involves only
finitely many choices.

\begin{lemma}[folded fan-to-ledger interface]\label{lem:fan-ledger}
In the ledger frame, cut the normal fan at the upper-gap direction
\(\pi/2\) and split the two
continuous source intervals whenever one of their three support contacts
changes.  If the resulting contact sweep is in normal-fan order, or has
the prefix--middle--suffix form of Lemma~\ref{lem:sweep}, then the three
vector pieces of these chunks, sorted by temporal contact, have exactly an
order allowed by Lemma~\ref{lem:ledger}.  In particular, every suffix state
in Lemma~\ref{lem:calibration} has norm at most \(1\).
\end{lemma}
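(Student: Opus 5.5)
The plan is to compute, for each chunk of the folded source partition, the outward normal directions of its three vector pieces in the ledger frame, and to show that sorting them by the normal-fan position of their supporting contacts reproduces the cyclic block order (4.3). Then the prefix--middle--suffix sweep of Lemma~\ref{lem:sweep} becomes exactly the migration allowed in the definition of an anchored marking.

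\emph{Locating the pieces on the fan.} In the ledger frame the atom $Z_0=(0,2\lambda c)$ points along $u_{\pi/2}$, the cut direction. The continuous source interval $[a,b]$ contributes pieces whose directions lie in three windows:
\begin{itemize}
\item $2c\,\nu$ on $[a,b]$ gives $R_0$, with directions near $0$;
\item the shift by $\pi-\beta$ gives $R_1$, with directions in $[\pi-\beta+a,\pi-\beta+b]$;
\item the shift by $\pi+\beta$ gives $R_2$, with directions in $[\pi+\beta+a,\pi+\beta+b]$.
\end{itemize}
Reflection gives the corresponding $L$ blocks. The atom's shifts give $Z_1,Z_2$ at $3\pi/2\mp\beta$.

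Reading the fan counterclockwise from $\pi/2$, I expect the windows to appear in the order
\[
R_1,\ L_0,\ R_2,\ Z_1,\ Z_2,\ L_1,\ R_0,\ L_2 .
\]
This is (4.3) with $\tfrac12Z_0$ at both ends. To confirm the windows are disjoint and correctly ordered, I will use only $-\beta/2<a<b<0$ together with $\beta=\pi/5$, as in the comparison list for (3.5). For instance, $L_0$ occupies $[\pi-b,\pi-a]$ and must sit between $R_1$ and $R_2$, which needs $\pi-\beta+b<\pi-b$ and $\pi-a<\pi+\beta+a$. Both follow from $a>-\beta/2$ and $b<0$.

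\emph{Matching the fan with the ledger.} Every piece is supported at a vertex whose normal cone contains the piece's directions (Lemma~\ref{lem:normal-aggregation}). So when the contact sweep is in weak normal-fan order, sorting the pieces by temporal contact gives the order above. Within a block, a chunk boundary falls exactly where a contact changes, so subdividing a block is harmless. Ties at a shared vertex concern pieces from adjacent windows with the same contact; these I will order by fan position, which keeps the displayed order.

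The $Z_0$ atom is split between $F$ and $T$, the first and last contacts on the complementary chain. $Z_1,Z_2$ sit at the anchors.

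\emph{The sweep case.} In the prefix--middle--suffix case of Lemma~\ref{lem:sweep}, the prefix consists of contacts $X\le Z_1$ with $t(X)\le t(F)$. It is a fan prefix of the directions before $3\pi/2-\beta$, hence a prefix of $(R_1,L_0,R_2)$, and it occurs temporally before $F$, that is, before the first half of $Z_0$. The suffix is symmetrically a suffix of $(L_1,R_0,L_2)$ occurring after the last half-atom. That is exactly the anchored definition, and Lemma~\ref{lem:ledger} then bounds every suffix state by $1$.

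\emph{Main obstacle.} The one point I expect to need care is that a prefix contact with $t(X)\le t(F)$ lies on the fan before $Z_1$, so it cannot carry pieces of $Z_2$ or of the $L_1,R_0,L_2$ windows. The same goes for the mirror statement for the suffix. This requires the window order to be strict, and in particular that the $Z_1$ direction separates $R_2$'s window from $Z_2$'s window. I would check this first, directly from $b<0<\beta$.
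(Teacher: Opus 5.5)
Your proposal is correct and follows essentially the paper's own proof: unwrap the ten folded blocks starting from the atom at $\pi/2$, check their strict order from $-\beta/2<a<b<0$, treat a normal-ordered sweep as a mere subdivision of the blocks, and read the prefix--middle--suffix form of Lemma~\ref{lem:sweep} as exactly the two half-atom migrations that Lemma~\ref{lem:ledger} permits. Two small bookkeeping corrections: the separations $\pi+\beta+b<3\pi/2-\beta$ and $3\pi/2+\beta<2\pi-\beta-b$ need $b<\pi/2-2\beta=\pi/10$, so they use $\beta<\pi/4$ and not merely $b<0<\beta$; and pieces whose contact is $F$ itself (resp.\ $T$) must be placed before (resp.\ after) the half-atom there, so that the migrated set is a genuine fan prefix (resp.\ suffix)---the paper does this, and your sweep paragraph does it implicitly, but your stated fan-position tie rule would put them on the other side.
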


\begin{proof}
We first remove any ambiguity about the word ``exactly.''  Unwrap one turn
of the angular support of \(\mu\), beginning with its atom in direction
\(\pi/2\).  The ten vector blocks and their angular supports are
\[
\begin{array}{c|c@{\qquad}c|c}
\text{block}&\text{angular support}&\text{block}&\text{angular support}\\ \hline
\frac12Z_0&\{\pi/2\}&
R_1&{[\pi-\beta+a,\pi-\beta+b]}\\
L_0&{[\pi-b,\pi-a]}&
R_2&{[\pi+\beta+a,\pi+\beta+b]}\\
Z_1&\{3\pi/2-\beta\}&
Z_2&\{3\pi/2+\beta\}\\
L_1&{[2\pi-\beta-b,2\pi-\beta-a]}&
R_0&{[2\pi+a,2\pi+b]}\\
L_2&{[2\pi+\beta-b,2\pi+\beta-a]}&
\frac12Z_0&\{5\pi/2\}.
\end{array}                                                       \tag{A.18}\label{eq:folded-fan-order}
\]
This table follows directly from \eqref{eq:fold}: the unshifted right and
left source intervals give \(R_0,L_0\), while rotation by
\(\pi-\beta\) gives \(R_1,L_1\), and rotation by \(\pi+\beta\) gives
\(R_2,L_2\).  The three images of the atom are \(Z_0,Z_1,Z_2\).

The intervals in \eqref{eq:folded-fan-order} are strictly ordered as
displayed.  Lemma~\ref{lem:length} gives
\(-\beta/2<a<b<0\), which proves each successive comparison in the
table.  For example, the only comparisons
between two continuous blocks reduce to
\[
 2b<\beta,\qquad -2a<\beta,
\]
and those involving \(Z_1,Z_2\) reduce to
\[
 b<\frac{\pi}{2}-2\beta=\frac{\pi}{10},
 \qquad
 a>\beta-\frac{\pi}{2}=-\frac{3\pi}{10}.
\]
Thus the normal-fan block order is precisely
\[
 \frac12Z_0,\ R_1,\ L_0,\ R_2,\ Z_1,\ Z_2,\
 L_1,\ R_0,\ L_2,\ \frac12Z_0,                                 \tag{A.19}
\]
which is \eqref{eq:ledger-walk}.  The three strict parameter inequalities
used here are also kernel-checked in the theorem
\[
                    \code{certified\_folded\_fan\_separation}.
\]

For a normal-ordered contact sweep, splitting at contact changes merely
subdivides the blocks in this list, so Lemma~\ref{lem:ledger} applies
directly.  Now suppose Lemma~\ref{lem:sweep} applies.  By
\eqref{eq:folded-fan-order}, the entire normal interval before \(Z_1\)
consists of the side sector
\(\mathcal A=(R_1,L_0,R_2)\).  Let \(\mathcal P\) be the pieces in that
sector whose contact rank is at most \(t(F)\).  Lemma~\ref{lem:sweep}
says that \(\mathcal P\) is a prefix of \(\mathcal A\), and that the
relative order of \(\mathcal A\setminus\mathcal P\) is unchanged.  Split
the last source chunk if the threshold passes through it.  Dually, in
\(\mathcal B=(L_1,R_0,L_2)\), the pieces whose rank is at least \(t(T)\)
form a suffix \(\mathcal S\).  Equal-rank pieces all support at the same
polygon vertex and may be put on either side of the corresponding
half-atom.

After sorting by temporal contact, the complete vector list is therefore
\[
 \mathcal P,\ \frac12Z_0,\
 (\mathcal A\setminus\mathcal P),\ Z_1,\ Z_2,\
 (\mathcal B\setminus\mathcal S),\ \frac12Z_0,\ \mathcal S.     \tag{A.20}
\]
Every displayed portion retains its normal-fan order.  Relative to
\eqref{eq:ledger-walk}, (A.20) does only two things: it moves the prefix
\(\mathcal P\) across the first half of \(Z_0\), and the suffix
\(\mathcal S\) across the last half.  These are precisely the two
migrations allowed in Lemma~\ref{lem:ledger}; no appeal to the drawing,
and no further permutation, is hidden here.  Hence
\(|R_i|\le1\) for every finite suffix state.
\end{proof}

\section{Exact data and verification}\label{sec:exact-data}

\subsection{The quartic parameter}

The geometry above used only the three equations
\eqref{eq:calibration-system}.  For an exact, one-variable specification,
put
\[
\begin{aligned}
 \mathcal Q(t)={}&(7+30c)(t^4+1)+(94+300c)(t^3+t)+(134+428c)t^2\\
                 &-4sc\,(t^2-1)(t^2-4t+1).
\end{aligned}                                                    \tag{B.1}\label{eq:calibration-quartic}
\]
This quartic has a unique zero
\[
                      p\in\left[-\frac18,-\frac19\right].        \tag{B.2}\label{eq:p-definition}
\]
Here is the exact uniqueness argument, including the polynomial used by
the certificate.  Put
\[
\begin{aligned}
\mathcal P(t)={}&4sc(t^5+3t^4+94t^3+138t^2+93t+7)\\
 &-2c(15t^5+149t^4+218t^3+150t^2+11t+1)\\
 &+16s(t^4+7t^3+10t^2+7t+1)\\
 &-7t^5-97t^4-122t^3-94t^2-19t+3.
\end{aligned}
\]
Exact polynomial reduction using
\(4c^2=2c+1\) and \(4s^2=3-2c\) gives
\[
                         c\mathcal P(t)=(s-ct)\mathcal Q(t).      \tag{B.3}\label{eq:quintic-deflation}
\]
Let \(I=[-1/8,-1/9]\) and, as exact terminating decimals, put
\[
 r_-=-0.1200344643529,\qquad r_+=-0.1200344643527.
\]
Direct rational comparison gives
\(-1/8<r_-<r_+<-1/9\).
The exact inequalities used are
\[
 \mathcal P(r_-)<0<\mathcal P(r_+),\qquad
 \mathcal P'(t)>0\quad(t\in I),\qquad
 s-ct>0\quad(t\in I).                                           \tag{B.4}\label{eq:quintic-isolation}
\]
Only the first pair requires evaluation at the tight algebraic
enclosures.  The derivative has a large elementary margin.  For
\(r=-t\in[1/9,1/8]\), direct expansion gives
\[
\begin{aligned}
\mathcal P'(t)&=A(r)sc+B(r)c+C(r)s+D(r),\\
A(r)&=20r^4-48r^3+1128r^2-1104r+372,\\
B(r)&=-150r^4+1192r^3-1308r^2+600r-22,\\
C(r)&=-64r^3+336r^2-320r+112,\\
D(r)&=-35r^4+388r^3-366r^2+188r-19.
\end{aligned}
\]
Termwise use of \(1/9\le r\le1/8\) gives
\[
 A(r)>230,\qquad B(r)>20,\qquad C(r)>70,\qquad D(r)>-4.
\]
Together with \(sc>47/100\), \(c>4/5\), and \(s>29/50\), this yields
\[
 \mathcal P'(t)>
 230\frac{47}{100}+20\frac45+70\frac{29}{50}-4
 =\frac{1607}{10}>0.
\]
The last inequality in \eqref{eq:quintic-isolation} is immediate from
\(t<0\) and \(s,c>0\).  The Lean development checks these elementary
bounds as well as the exact sign bracket.
Thus the intermediate value theorem gives a zero of \(\mathcal P\) in
\([r_-,r_+]\), while the derivative inequality makes it the only zero
in \(I\).  Since \(c>0\), \eqref{eq:quintic-deflation} and the last
inequality in \eqref{eq:quintic-isolation} show that \(\mathcal P\) and
\(\mathcal Q\) have exactly the same zeros in \(I\).  This proves
\eqref{eq:p-definition}.  The discarded linear factor has the spurious
root \(t=s/c=\tan\beta>0\), outside the geometric branch.
Define
\[
\begin{aligned}
\mathcal N(t)={}&4ct^2s+4ct^2+24cts+4cs-4c
       +2t^2s-t^2+4ts+2s+1,\\
\mathcal D(t)={}&12ct^2s+12ct^2+24ct+20cs+12c
       +6t^2s+3t^2+10t+2s+3,
\end{aligned}
\]
and then
\[
\begin{aligned}
q&=\frac{\mathcal N(p)}{\mathcal D(p)},&
a&=2\arctan p,& b&=2\arctan q,\\
\lambda&=\frac1s\left(
 \sin a+1+\frac{\sin(a+\beta)-\sin(b+\beta)}{2c}\right).
\end{aligned}                                                    \tag{B.5}\label{eq:parameter-definition}
\]
The denominator is safely positive without a delicate estimate.  Indeed,
for \(-0.121<p<-0.119\), all four \(p^2\)-terms in
\(\mathcal D(p)\) are nonnegative, while \(0<c<1\), \(c>4/5\), \(s>0\)
give
\[
 \mathcal D(p)>12c+3+24cp+10p
              >\frac{48}{5}+3-\frac{34\cdot121}{1000}
              =\frac{4243}{500}>0.                              \tag{B.6}\label{eq:denominator-positive}
\]

Here is a human-checkable reconstruction of all three calibration
equations.  Let \(F_1,F_2,F_3\) denote the left sides of
\eqref{eq:calibration-system} after
\(a=2\arctan p,\ b=2\arctan q\), and put
\[
 \Delta=c(1+p^2)(1+q^2),\qquad
 \mathcal A=\Delta F_1,\qquad
 \mathcal B=\Delta F_2,\qquad
 \mathcal G=-\Delta F_3 .
\]
The half-angle identities give
\[
 \mathcal G(p,q)=3cp^2q-cpq^2-cp+3cq-p^2s+q^2s.
\]
The combination \(c\mathcal A-s\mathcal B\) cancels \(\lambda\).
Its remaining numerator, and the only auxiliary coefficient needed
below, factor compactly as
\[
\begin{aligned}
\mathcal H(p,q)={}&
 c^2\!\left((p^2+1)(q^2-q+1)+3p(q^2+1)\right)\\
&+cs(p^2+1)(q^2-1)+s^2(p-q)(pq-1),\\
\mathcal K(p)={}&c^2(p^2+3p+1)+cs(p^2+1)-ps^2 .
\end{aligned}                                                    \tag{B.7}
\]
Direct expansion, using only \(4c^2=2c+1\) and
\(4s^2=3-2c\) in the first two lines, gives the three identities
\[
\begin{aligned}
 \mathcal D(p)^2\,
 \mathcal G\!\left(p,\frac{\mathcal N(p)}{\mathcal D(p)}\right)
       &=2\mathcal P(p),\\
 (cp-s)\mathcal H(p,q)
   &=\left(\frac{\mathcal D(p)q-\mathcal N(p)}8\right)(p^2+1)
      -\mathcal K(p)\mathcal G(p,q),\\
 c\mathcal A-s\mathcal B&=-\mathcal H .
\end{aligned}                                                    \tag{B.8}\label{eq:reconstruction-chain}
\]
Now take \(q=\mathcal N(p)/\mathcal D(p)\).  Since
\(\mathcal P(p)=0\) and \(\mathcal D(p)>0\), the first identity gives
\(\mathcal G=0\).  The second then gives \(\mathcal H=0\), because
\(cp-s<0\).  The displayed formula for \(\lambda\) makes
\(F_1=0\), hence \(\mathcal A=0\); the last identity and \(s>0\) give
\(\mathcal B=0\).  Finally \(\Delta>0\), so
\[
                         F_1=F_2=F_3=0.
\]
Thus all three equations in \eqref{eq:calibration-system} are recovered
without a hidden resultant or a numerical fit.  The three expansions in
\eqref{eq:reconstruction-chain} are also kernel-checked in
\code{CalibrationExistence.lean}.
In terms of the
golden ratio \(\varphi\), the defining equation is
\[
 (7{+}15\varphi)(p^4{+}1)+(94{+}150\varphi)(p^3{+}p)
 +(134{+}214\varphi)p^2
 =2\sin72^\circ\,(p^2{-}1)(p^2{-}4p{+}1).
\]

\subsection{The displayed placement}

For completeness, the rigid motion used in Figure~\ref{fig:path} is
\[
\begin{aligned}
 t_y&=x\cos b+y\sin b+s,\\
 t_x&=\frac{sc-(c\cos b-s\sin b)K_x-ct_y}{s},\\
 \Psi(P)&=R_{\pi/2+b}P+(t_x,t_y).
\end{aligned}                                                    \tag{B.9}\label{eq:placement}
\]
If
\[
 (P_0,\ldots,P_7)=\Psi(\sigma E,\sigma T_2,\sigma T_1,\sigma K,
                              K,T_1,T_2,E),
\]
then
\[
 (P_0)_y=(P_1)_y=0,\qquad
 -s(P_7)_x+c(P_7)_y=sc,\qquad
 s(P_4)_x+c(P_4)_y=sc.                                        \tag{B.10}\label{eq:placed-contacts}
\]
The two base equalities and the right-arm equality follow directly from
\eqref{eq:placement} and \(\sin^2b+\cos^2b=1\).  After the same
substitution, the left-arm equality is \(\cos b\) times the second
equation of \eqref{eq:contacts} minus \(\sin b\) times the first.
These placement identities are also checked in
\code{Construction.lean}.
Thus the initial segment lies on the base, the endpoint lies on the left
side, and \(\Psi(K)\) touches the right side.

\begin{corollary}[transcendence of the escape constant]
\label{cor:transcendence}
The exact escape constant \(C\) is transcendental.
\end{corollary}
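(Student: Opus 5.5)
The plan is a Lindemann--Weierstrass (Hermite--Lindemann) argument applied to the angle difference \(b-a\); the only input needed is that every other ingredient of \(C\) is algebraic. Write
\[
 C=2s(b-a)+2sc\lambda .
\]
I would first check algebraicity, in order, using only the exact definitions in \eqref{eq:calibration-quartic}--\eqref{eq:parameter-definition}.

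\begin{enumerate}
\item The numbers \(s=\sin(\pi/5)\) and \(c=\cos(\pi/5)\) are algebraic, since \(4c^2=2c+1\) and \(4s^2=3-2c\).
\item The parameter \(p\) is a root of the nonzero quartic \(\mathcal Q\), whose coefficients are algebraic. Nonzero means, for example, that the leading coefficient \(7+30c-4sc\) is positive. Hence \(p\) is algebraic.
\item Since \(\mathcal D(p)>0\) by \eqref{eq:denominator-positive}, \(q=\mathcal N(p)/\mathcal D(p)\) is algebraic.
\item The half-angle formulas give
\[
 e^{ia}=\frac{1+ip}{1-ip},\qquad e^{ib}=\frac{1+iq}{1-iq},
\]
so \(e^{ia}\) and \(e^{ib}\) are algebraic. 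Consequently \(\sin a,\cos a,\sin b,\cos b\) are algebraic, and so are \(\sin(a\pm\beta)\) and \(\sin(b+\beta)\), since \(e^{i\beta}\) is a root of unity.
\item The formula for \(\lambda\) in \eqref{eq:parameter-definition} is then a rational expression in algebraic numbers with nonzero denominators \(s\) and \(2c\). So \(\lambda\), and hence \(2sc\lambda\), is algebraic.
\end{enumerate}

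Now suppose, for contradiction, that \(C\) is algebraic. Then \(b-a=(C-2sc\lambda)/(2s)\) is algebraic. By Lemma~\ref{lem:length} we have \(a<b\), so \(b-a\neq0\). Hence \(\alpha=i(b-a)\) is a nonzero algebraic number, and Hermite--Lindemann says \(e^{\alpha}\) is transcendental. But
\[
 e^{\alpha}=e^{ib}\,e^{-ia}=\frac{(1+iq)(1-ip)}{(1-iq)(1+ip)}
\]
is algebraic by step 4. This contradiction shows that \(C\) is transcendental.

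There is no real obstacle. The only points needing care are the nondegeneracy conditions: \(\mathcal Q\not\equiv0\), \(\mathcal D(p)\ne0\), \(1\pm ip\neq0\) (automatic for real \(p\)), and \(a\neq b\). All of these are already certified earlier in the paper, in \eqref{eq:denominator-positive} and Lemma~\ref{lem:length}.
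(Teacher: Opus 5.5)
Your proof is correct and follows the paper's argument. It establishes that \(s,c,p,q\), and hence \(\lambda\), are algebraic, then applies Lindemann's theorem to the nonzero angle \(b-a\), whose exponential \(e^{i(b-a)}\) is algebraic. The paper writes the last step through \(\delta=(b-a)/2\) and \(e^{2i\delta}=(1+i\tan\delta)/(1-i\tan\delta)\) with \(\tan\delta=(q-p)/(1+pq)\), which is the same computation as your product \(e^{ib}e^{-ia}\).
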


\begin{proof}
The numbers \(s,c,p,q\) are algebraic.  The half-angle identities express
\(\sin a,\cos a,\sin b,\cos b\), and therefore \(\lambda\), algebraically
in terms of them.  Put \(\delta=(b-a)/2\).  Lemma~\ref{lem:length} gives
\(a<b<0\), and hence \(p<q<0\).  Thus
\[
              \tan\delta=\frac{q-p}{1+pq}
\]
is a nonzero algebraic number; here \(1+pq>1\).  If \(\delta\) were
algebraic, then \(2i\delta\) would be a nonzero algebraic number, while
\[
              e^{2i\delta}
                 =\frac{1+i\tan\delta}{1-i\tan\delta}
\]
would be algebraic.  The denominator is nonzero because
\(\tan\delta\) is real.  This contradicts Lindemann--Weierstrass
\cite{baker1975}.  Therefore \(\delta\) is transcendental.  Finally,
\[
                         C=4s\delta+2sc\lambda.
\]
Since \(s\ne0\) and \(s,c,\lambda\) are algebraic, algebraicity of \(C\)
would force \(\delta\) to be algebraic.
\end{proof}

\subsection{What is verified, and how}\label{sec:boundary}

The argument has exactly two finite certificate families.  They are the
only places where a long exact calculation is delegated to machine
checking; neither is numerical evidence.

\begin{enumerate}
\item \textbf{Calibration.}  This certificate checks the isolated root
  of \eqref{eq:calibration-quartic} via
  \eqref{eq:quintic-deflation}--\eqref{eq:quintic-isolation}; reconstructs
  \eqref{eq:calibration-system} via
  \eqref{eq:denominator-positive}--\eqref{eq:reconstruction-chain}; and
  checks the coarse scalar bounds used in
  \eqref{eq:strict-ledger-states}, the side-sector-mass bound in
  Lemma~\ref{lem:ledger}, and folded-fan separation.  It also checks
  the length identity in its algebraic form: the seven-piece total
  \(2\{K_x+\tau+s(b-a)+d\}\) equals \(C\) exactly.  Reading that total
  as \(\len\Gamma\)---that is, summing the piece lengths of
  \eqref{eq:piece-table}---is elementary and remains prose.
\item \textbf{Geometry.}  The seven-piece curve, its junctions and
  piece lengths, the strict \(>3\) determinant bound for its contact
  system, and, after the analytic support-fan identification of
  Lemma~\ref{lem:candidate-fan}, every strict or exact entry in the
  eighteen windows of Proposition~\ref{prop:attainment}.
\end{enumerate}

Both are checked in Lean~4 over the rationals: every numerical
inequality used in the proof reduces to exact rational comparisons between
certified interval endpoints.  The accompanying \code{lean-toolchain}
and \code{lake-manifest.json} pin the compiler and the exact Mathlib
revision, and a single \code{lake build} checks the full dependency
closure.  The development, the independent Python audits, and the paper
source are available at \url{https://github.com/atemerev/gnomon}.
The development contains no \code{sorry}, no
\code{native\_decide}, and no added axiom.  The dedicated module
\code{AxiomAudit.lean} prints the dependencies of every exported
certificate theorem used here; each is a subset of Mathlib's standard
classical trio of \code{propext}, \code{Classical.choice}, and
\code{Quot.sound}, and the build fails if any dependency were to fall
outside that trio.
The outer-order exclusion is now the rational support calculation
\eqref{eq:rational-supports}--\eqref{eq:rational-support-bounds}, not a
computational input.  For independent cross-checking,
\code{verify\_rational\_supports.py} exactly re-expands the eight
printed supported sums, and \code{OuterTetral.lean} retains the
original fixed certificates for all fourteen orders.

The reusable part of the argument is also formalized, and is independent
of both certificate families: the escape identity
\eqref{eq:balance}, the first-segment deduction underlying
Lemma~\ref{lem:bitonic}---granting the interleaving criterion for chords
of a convex polygon, which is its only geometric input---the normal-cone
aggregation of
Lemma~\ref{lem:normal-aggregation} in the finite form actually used, the
strictly shortening uncrossing surgery of
Proposition~\ref{prop:standard}, the Abel summation of
Lemma~\ref{lem:calibration},
the exact circle states \eqref{eq:unit-ledger-blocks}, the unit-junction
chain \eqref{eq:unit-ledger-junctions}, the two exact interior-state
formulas \eqref{eq:strict-ledger-states}, the radial identities
\eqref{eq:radial-derivatives}, the ledger-shape deduction of
Lemma~\ref{lem:sweep}, and the metric core of
Lemma~\ref{lem:local-gap}: the doubled-angle identity for the reflected
chord, the wedge bound \(\theta<\pi/4\) of step (ii), the implication
\(h\le d\), and the reflection estimate (A.6) itself.  What remains
outside Lean in that lemma is its planar combinatorics: the exclusion of
alternative \({\rm(L)}\) and the hull-preserving surgery of
Lemma~\ref{lem:outer-caps}.

We have also reproduced the construction by a route that avoids the
quartic entirely.  Solving the contact and stationarity system by
high-precision Newton iteration, followed by integer relation detection,
recovers the same parameters and the same critical length to more than
twenty-five digits; and a rational line--arc witness, certified in exact
arithmetic over \(\Q(\sqrt5,\sqrt{10-2\sqrt5})\), gives the independent
bound \(\mathcal E(G)<1.282680\), which exceeds \(C\) by about
\(3.8\times10^{-6}\).  Neither computation is used below; both are
recorded only as cross-checks on the data of
Section~\ref{sec:construction}.

The only imported geometric result is the \(\Lambda\)-configuration
theorem, Theorem~\ref{thm:lambda}; Proposition~\ref{prop:standard}
supplies exactly its hypotheses.

The remaining steps are proved here in conventional prose, and each is
planar geometry rather than computation: the compactness argument of
Proposition~\ref{prop:standard}; the one-turn support principle of
Lemma~\ref{lem:one-turn-support} and the candidate normal-list
identification of Lemma~\ref{lem:candidate-fan}; the planar incidence and
hull surgery of Lemmas~\ref{lem:outer-caps} and~\ref{lem:local-gap},
including the exclusion of alternative \({\rm(L)}\); endpoint peeling in
Lemma~\ref{lem:bitonic}; the cutting of the source measure into finitely
many normal-fan cells preceding Lemma~\ref{lem:normal-aggregation}; and
the folded-fan identification of Lemma~\ref{lem:fan-ledger}.

\end{document}